\numberwithin{equation}{section}
\theoremstyle{plain}
\newtheorem{thm}{Theorem}[section]
\newtheorem{lemma}[thm]{Lemma}
\newtheorem{prop}[thm]{Proposition}
\newtheorem{claim}[thm]{Claim}
\newtheorem*{lemma*}{Lemma}
\newtheorem*{prop*}{Proposition}
\newtheorem*{claim*}{Claim}
\newtheorem*{cor*}{Corollary}
\newtheorem*{thm*}{Theorem}
\theoremstyle{definition}
\newtheorem{defn}[thm]{Definition}
\newtheorem{prob}[thm]{Problem}
\newtheorem{questi}[thm]{Question}
\newtheorem{conject}[thm]{Conjecture}
\newtheorem{rmk}[thm]{Remark}
\newtheorem*{defn*}{Definition}
\newtheorem*{rmk*}{Remark}
\def\mindeg{{\rm mindeg}}
\def\dx{\,dx}
\def\dy{\,dy}
\def\dz{\,dz}
\def\du{\,du}
\def\dt{\,dt}
\newcommand{\N}{\mathbb{N}}
\newcommand{\R}{\mathbb{R}}
\newcommand{\rplus}{\mathbb{R}_{\ge 0}}
\newcommand{\Z}{\mathbb{Z}}
\newcommand{\Sphere}{\mathbb{S}}
\DeclareMathOperator*{\chip}{Chip}
\DeclareMathOperator*{\id}{id}
\begin{document}

\title{The devil's staircase for chip-firing on random graphs and on graphons}

\author[renyi]{Viktor Kiss\fnref{viktor}}
\ead{kiss.viktor@renyi.hu}
\fntext[viktor]{Partially supported by NSF grant DMS-1455272, and by the National Research, Development and Innovation Office -- NKFIH, grants no. 104178, 124749, 129211, and 128273.}

\author[cornell]{Lionel Levine\fnref{lionel}}
\ead{levine@math.cornell.edu}
\fntext[lionel]{Partially supported by NSF grant DMS-1455272.}

\author[tki]{Lilla T\'othm\'er\'esz\fnref{lilla}}
\ead{tmlilla@cs.elte.hu}
\fntext[lilla]{Partially supported by NSF grant DMS-1455272, and by the National Research, Development and Innovation Office -- NKFIH, grants no. 128673, and 132488.}

\address[renyi]{Alfr\'ed R\'enyi Institute of Mathematics, Re\'altanoda u. 13--15, H-1053 Budapest, Hungary}

\address[cornell]{Cornell University, Ithaca, New York 14853-4201, USA}

\address[tki]{MTA-ELTE Egerv\'ary Research Group, P\'azm\'any P\'eter s\'et\'any 1/C, Budapest, Hungary}

\date{}

\begin{abstract}
    We study the behavior of the activity of the parallel chip-firing upon increasing the number of chips on an Erd\H os--R\'enyi random graph. We show that in various situations the resulting activity diagrams converge to a devil's staircase as we increase the number of vertices. Our method is to generalize the parallel chip-firing to graphons, and to prove a continuity result for the activity. We also show that the activity of a chip configuration on a graphon does not necessarily exist, but it does exist for every chip configuration on a large class of graphons.
\end{abstract}

\begin{keyword}
Abelian sandpile model \sep parallel chip-firing \sep devil's staircase \sep random graph \sep graphon
\MSC[2020]  82C20 \sep 05C80 \sep 26A30 \sep 60J05
\end{keyword}

\maketitle

\section{Introduction}
\label{s:intro}

In this paper, we study the behavior of the activity of the parallel chip-firing upon increasing the number of chips in the system. Numerical experiments of Bagnoli, Cecconi, Flammini, and Vespignani \cite{bagnoli} suggested that for planar grids, upon increasing the number of chips in the system, the activity asymptotically increases as a Devil's staircase. Later, Levine \cite{Lionel_parallel} proved a similar statement for complete graphs, i.e., if we take a sequence of complete graphs whose size tends to infinity, and a sequence of chip configurations on them that converge in a certain sense, then the activity diagrams tend to a Devil's staircase. 
In this paper, we prove analogous statements in various situations for sequences of Erd\H os--R\'enyi random graphs. Our method is to generalize the parallel chip-firing to graphons, and then to prove a continuity theorem for the activity. Levine's results can be interpreted as a Devil's staircase result for the constant graphon. Using our continuity theorem, we can handle the case of sequences of graphons converging to a constant graphon. 

\subsection{Preliminaries}    

Let $G$ be a graph with vertex set $V(G)$ and edge set $E(G)$. We will denote the number of edges connecting vertex $u$ and $v$ by $e_G(u,v)$, and the degree of a vertex $v$ by $\deg_G(v)$. We will often consider Erd\H os--R\'enyi random graphs. By $G(n,p)$ we denote the random graph on $n$ vertices, where each edge is present independently with probability $p$.

For a graph $G$, a \emph{chip configuration} assigns to each vertex a non-negative amount of chips. Hence a chip configuration is a function $\sigma : V(G) \to \rplus$, where $\rplus = \{x \in \R : x \ge 0\}$. In the literature, a chip-configuration is usually considered to be integer-valued, but since we will be interested in the change of dynamics as we gradually increase the amount of chips, we choose to allow nonintegrality. For two chip configurations $\sigma$ and $\sigma'$, $\sigma \geq \sigma'$ means that $\sigma(v) \geq \sigma'(v)$ for each vertex $v$.

Firing a node $v$ means that the fired vertex passes a chip along each edge incident to it, i.e., the chip configuration $\sigma$ gets modified to 
\begin{equation*}
\begin{array}{cl}
\sigma(u)+e_G(u,v) & \text{if $u\neq v$}, \\
\sigma(u) - \deg_G(v) & \text{if $u=v$}.
\end{array}
\end{equation*}

During a step of the parallel chip-firing, each 
vertex $v$ of $G$ fires  $f(v) = \Big\lfloor \frac{\sigma(v)}{\deg_G(v)}\Big\rfloor$ times, where we call $f = f(G, \sigma)$ the \emph{firing vector} of $\sigma$. The resulting  configuration is

\begin{equation*}
  U\sigma(v) =
    \sigma(v) - \deg_G(v)f(v)  + \sum_{u\in V(G)} f(u) e_G(u,v). 
\end{equation*}

We will denote by $U^n \sigma$ for $n\in \mathbb{N}$ the chip configuration after $n$ steps of the parallel chip-firing. 

We denote by $u_n(v) = u_n(G, \sigma)(v)$ the number of times $v$ fired during the first $n$ turns, and call this function the \emph{odometer}, that is, $u_n(G, \sigma)(v)=\sum_{k=0}^{n - 1} f(G, U^k\sigma)(v)$.

It is easy to see that a parallel chip-firing started from the configuration $\sigma$ on a graph $G$ eventually enters a periodic state, and if $G$ is connected then each vertex fires the same number of times in a period. Hence, $\lim_{n\to\infty} \frac{u_n(v)}{n}$ exists and is the same for each $v \in V(G)$. We call this quantity the \emph{activity} of $\sigma$ and denote it by $a(G, \sigma)$. 

We will be interested in the way the activity changes when we add a small amount of chips to each node. The \emph{activity diagram} of $G$ and $\sigma$ is $s(y) = s(G,\sigma)(y) = a(G,\sigma+y\cdot \deg_G)$. Numerical experiments of \cite{bagnoli} suggested that for planar grids of growing size, the activity diagrams tend to a Devil's staircase. We will call a function $c : [a, b] \to [0, 1]$ a \emph{Devil's staircase}, if $c(a) = 0$, $c(b) = 1$, it is continuous, nondecreasing, but locally constant on an open dense set. Levine \cite{Lionel_parallel} proved such a phenomenon for complete graphs, i.e. that if we take a sequence of complete graphs whose size tends to infinity, and a sequence of chip configurations on them that converge in a certain sense, then (with a mild assumption on the limiting chip configuration) the activity diagrams tend to a Devil's staircase. We take this analysis further, and are able to handle the case of sequences of Erd\H os--R\'enyi random graphs.

\subsection{Results}

To analyze the activity diagram on a (dense) graph, we use the theory of graphons. In order to do so, we introduce parallel chip-firing on graphons. On a graphon, the parallel chip-firing is not necessarily eventually periodic, and the activity of a chip configuration might not exist (see Proposition \ref{p:example for activity to not exist}). However, we show that if there is a lower bound on the degrees in the graphon then the activity exists for any chip configuration (see Theorem \ref{t:activity_exists_with_FDC}).

We show a continuity theorem for the activity. This theorem says that for a graphon with a lower bound on the degrees, and a chip configuration that is compatible with the graphon in a mild sense, if another graphon with a lower bound on the degrees is close in cut distance, and the chip configurations are close to each other in the $L^1$ distance, then the activities are also close to each other.
For a precise statement, see Theorem \ref{thm:robustness}. Using the method of \cite{Lionel_parallel}, we show in Theorem \ref{thm:C_p_Devils_staircase} that with some mild assumptions on the chip configuration $\sigma$, for the constant $p$ graphon $C_p$, the activity diagram $s(C_p,\sigma)$ is a Devil's staircase. Combining this result with the continuity theorem, we are able to prove Theorem \ref{t:ER_Devil} that gives a condition on a sequence of Erd\H os--Rényi random graphs and chip configurations that the activity diagrams converge to a Devil's staircase. Finally, we give a concrete example for a one-parameter family of random chip configurations, where the activities tend to a Devil's staircase, see Theorem \ref{t:geometric_staircase}.

\section{Parallel chip-firing on graphons}
\label{ss:parallel cf on graphons}

We now review the notion of graphons defined by Lov\'asz and Szegedy \cite{Lovasz-Szegedy} as limits of sequences of dense graphs, then introduce parallel chip-firing on them. We follow \cite{Lovasz_large_graphs} in introducing graphons. See \cite{Lovasz_large_graphs} for more information about graphons. 

A \emph{graphon} is a Lebesgue measurable function $W : [0, 1]^2 \to [0, 1]$ which is \emph{symmetric}, that is, $W(x, y) = W(y, x)$ for all $x, y \in [0, 1]$. We can think of a graphon as a generalized 
graph: the vertex set of $W$ is the unit interval $[0, 1]$, and instead of specifying whether two vertices, $x$ and $y$ are connected in $W$ or not, we have a real number $W(x, y) = W(y, x)$ describing how well they are connected.

Here, and everywhere else where we do not specify the measure, we mean the Lebesgue measure, that we denote by $\lambda$.

Note that for every (labeled) graph $G$ with vertices $\{v_1, v_2, \dots, v_n\}$ one can construct a corresponding graphon $W_G$ the following way: partition $[0, 1]$ into $n$ measurable sets $A_1, \dots, A_n$ with $\lambda(A_1) = \lambda(A_2) = \dots = \lambda(A_n)$. 
Then set $W_G(x, y) = 1$ if $x \in A_i$ and $y \in A_j$ with $(v_i, v_j) \in E(G)$, and $W_G(x, y) = 0$ otherwise. 

The \emph{degree} of a vertex $x$ of $W$ is $\deg_W(x) = \int_0^1 W(x, y) \dy$. Note that the degree is well-defined for almost all $x\in[0,1]$. We define
$$
\mindeg(W)=\inf\{\varepsilon: \lambda(\{x: \deg_W(x)\geq \varepsilon\})>0 \}.
$$ 
We use the notation $\deg_{W}(x,A)=\int_A W(x, y) \dy$

To define the convergence of graphon sequences, a notion of distance of graphons is needed. It turns out that the right notion is the cut distance of graphons.

\begin{defn}The \emph{(labeled) cut distance} of the graphons $U$ and $W$ is defined by
	$$\mathrm{d}_\Box(U,W)=\sup_{S,T\subseteq [0,1]} \left|\int_S \int_T U(x,y)-W(x,y)\dy\dx\right|.$$
\end{defn}
The labeled cut distance corresponds to comparing the similarity of two graphons when identifying vertices of the same label. 
The \emph{unlabeled cut distance} corresponds to the case where we want to find the best identification of the two vertex sets:  $\delta_{\Box}(U,W)=\inf_{\varphi}\mathrm{d}_\Box(U,W^{\varphi})$ where $\varphi$ runs over the invertible measure preserving transformations of $[0,1]$ to itself, and $W^{\varphi}(x,y)=W(\varphi(x),\varphi(y))$ (see \cite[Subsection 8.2.2]{Lovasz_large_graphs}). For a (labeled) graph $G$ and a graphon $W$, we use the notation $\mathrm{d}_\Box(G, W) = \mathrm{d}_\Box(W_G, W)$, and similarly for $\delta_\Box$. $\delta_\Box$ is a pseudometric on the space of graphons, and by factorizing with the graphons at zero unlabeled cut distance, one obtains a compact metric space. 

In our applications, we consider sequences of Erdős--Rényi graphs $G(n, p)$, $n=1, 2, \dots$. Such a sequence is known to converge to the constant $p$ graphon $C_p$ with probability $1$ in the distance $\delta_\Box$.
Since $C_p^\varphi = C_p$ for any invertible, measure preserving transformation $\varphi$,  $\delta_\Box(W, C_p) = \mathrm{d}_\Box(W, C_p)$ for any graphon $W$. This fact, and the simpler formalization are the reasons that in this paper, we use the labeled cut distance as a metric on graphons. 

The definition of the parallel chip-firing on graphons is analogous to that on finite graphs. A \emph{chip configuration} on a graphon is an (almost everywhere) non-negative function $\sigma \in L^1([0, 1])$. We denote the set of chip configurations on a graphon $W$ by $\chip(W)$, and use $\|\sigma\|_1$ to denote the $L^1$ norm of a chip configuration $\sigma$.   
For a given chip configuration $\sigma$ on a graphon $W$, the parallel update rule is defined similarly as in the case of finite graphs. Let the \emph{firing vector} of $\sigma$ be
\begin{equation*}
  f(x) = f(W, \sigma)(x) = \left\{ \begin{array}{cl}
    \left\lfloor \frac{\sigma(x)}{\deg_W(x)}\right\rfloor & \text{if $\deg_W(x) > 0$} \\
    0 & \text{if $\deg_W(x) = 0$},
  \end{array}\right.
\end{equation*}
then we can define the update rule by 
\begin{equation*}
  U\sigma(x) = \sigma(x) - \deg_W(x)f(x)  + \int_0^1 f(y) W(x,y) \dy.
\end{equation*}
One can easily see that $f(W, \sigma)$ is defined almost everywhere, and it follows from the following claim that $U\sigma(x)$ is finite almost everywhere. 
\begin{claim}\label{c:L1_norm_stays_the_same_after_firing}
  For a arbitrary graphon $W$ and $\sigma \in \chip(W)$, $U\sigma \in \chip(W)$ and $\|U\sigma\|_1 = \|\sigma\|_1$.  
\end{claim}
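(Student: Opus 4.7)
The plan is to verify the three assertions (non-negativity, integrability, and $L^1$ norm preservation) in that order, with the single nontrivial ingredient being an application of Tonelli's theorem to justify swapping the order of integration.

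First I would observe that the pointwise bound $\deg_W(x)f(x) \le \sigma(x)$ holds almost everywhere by the definition of the floor (and trivially where $\deg_W(x)=0$). In particular, $\sigma - \deg_W \cdot f \ge 0$ a.e., and since $f, W \ge 0$, the integral term $\int_0^1 f(y)W(x,y)\dy$ is non-negative. So the only thing to check for $U\sigma \in \chip(W)$ beyond non-negativity is that $U\sigma$ is defined (finite) and $L^1$ almost everywhere.

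Next I would handle integrability. The key observation is that by Tonelli's theorem applied to the non-negative measurable function $(x,y)\mapsto f(y)W(x,y)$, we have
\begin{equation*}
\int_0^1 \int_0^1 f(y) W(x,y) \dy \dx = \int_0^1 f(y) \int_0^1 W(x,y) \dx \dy = \int_0^1 f(y)\deg_W(y) \dy \le \|\sigma\|_1 < \infty,
\end{equation*}
using the symmetry $W(x,y)=W(y,x)$ and the bound $f(y)\deg_W(y) \le \sigma(y)$. Consequently $\int_0^1 f(y) W(x,y)\dy$ is finite for almost every $x$, so $U\sigma$ is well-defined and finite a.e. Together with the non-negativity in the previous step, this gives $U\sigma \in \chip(W)$ once we know $U\sigma \in L^1$, which will follow from the next calculation.

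Finally, since all three terms on the right of the definition of $U\sigma$ are integrable (the first by assumption, the second by $\deg_W f \le \sigma$, the third by the Tonelli computation above), I would integrate the defining identity term by term:
\begin{equation*}
\int_0^1 U\sigma(x) \dx = \|\sigma\|_1 - \int_0^1 \deg_W(x) f(x) \dx + \int_0^1 \int_0^1 f(y) W(x,y) \dy \dx.
\end{equation*}
Applying Tonelli once more to the last term as above rewrites it as $\int_0^1 f(y)\deg_W(y)\dy$, which exactly cancels the middle term, leaving $\|U\sigma\|_1 = \|\sigma\|_1$. The only subtlety in the whole argument is the use of Tonelli, which is available because the integrand is non-negative; no delicate estimate or assumption on $\mindeg(W)$ is needed here.
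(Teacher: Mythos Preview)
Your proof is correct and follows essentially the same approach as the paper: both arguments observe that $\sigma - \deg_W \cdot f \ge 0$ and then apply Fubini/Tonelli for non-negative functions to the term $\int_0^1\int_0^1 f(y)W(x,y)\dy\dx$ to obtain the cancellation. You are slightly more explicit than the paper about why $U\sigma$ is finite almost everywhere, but otherwise the reasoning is identical.
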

\begin{proof}
  From the definition of the firing vector, $\sigma(x) - \deg_W(x) f(x) \ge 0$, hence $U\sigma$ is non-negative, and 
  \begin{equation*}
  \begin{split}
    \|U\sigma\|_1 &= \int|\sigma(x) - \deg_W(x) f(x)| \dx + \int \int f(y) W(x, y) \dy \dx \\
                  &= \int\sigma(x) \dx  - \int \deg_W(x) f(x) \dx + \int \int f(y) W(x, y) \dx \dy \\
                  &= \int\sigma(x) \dx  - \int \deg_W(x) f(x) \dx + \int f(y) \deg_W(y) \dy = \|\sigma\|_1,
  \end{split}
  \end{equation*}
  where we used Fubini's theorem for non-negative functions to interchange the integrals. 
\end{proof}

As in the case of finite graphs, the odometer $u_n(x) = u_n(W, \sigma)(x)$ denotes the number of times $x$ fired during the first $n$ turns, i.e.,
$$u_n(x) = u_n(W, \sigma)(x)=\sum_{i=0}^{n-1} f(W, U^i\sigma)(x).$$
To talk about any notion of activity, we need to assume that the graphon $W$ is \emph{connected}, that is, there is no measurable partition $[0, 1] = A \cup B$ with $\lambda(A)$, $\lambda(B) > 0$ and $W(x, y) = 0$ for almost all $(x, y) \in A \times B$. As we will see in Section \ref{ss:example: activity does not exists}, connectedness itself is not enough: there is a connected graphon $W$ with a reasonably nice chip configuration $\sigma$ such that $\lim_{n  \to \infty}\frac{u_n(W, \sigma)(x)}{n}$ does not exists for any $x \in [0, 1]$. If for a given graphon $W$ and chip configuration $\sigma$ there is a real number $a = a(W, \sigma)$ such that $\lim_{n \to \infty} \frac{u_n(W, \sigma)(x)}{n}$ exists and is equal to $a$ for almost all $x \in [0, 1]$ then we say that the \emph{activity exists} and is equal to $a$. As we will see in Theorem \ref{t:activity_exists_with_FDC}, the activity of any chip configuration exists on a graphon with a lower bound on the degrees.

We can also introduce the activity diagram of a chip configuration $\sigma$ on a graphon $W$ as a straightforward generalization of the graph case: $s(W,\sigma)(y)=a(W,\sigma+y\cdot \deg_W)$. The following claim tells us that activity diagrams are monotone increasing.

\begin{lemma}\label{l:more_chips_fire_more}
	If $\sigma'\geq\sigma$ almost everywhere, then $u_n(W,\sigma')(x)\geq u_n(W,\sigma)(x)$ for any graphon $W$, $n\in \mathbb{N}$ and almost all $x\in[0,1]$.
\end{lemma}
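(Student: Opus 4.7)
The approach is induction on $n$, driven by an explicit formula for $U^n\sigma$ in terms of $\sigma$ and the odometer $u_n$. A routine induction on $n$ (using $u_{n+1}=u_n+f(W,U^n\sigma)$ and the definition of $U$) shows that for a.e.\ $x\in[0,1]$,
\begin{equation*}
(U^n\sigma)(x) = \sigma(x) - \deg_W(x)\, u_n(W,\sigma)(x) + \int_0^1 u_n(W,\sigma)(y)\, W(x,y)\, \dy.
\end{equation*}
Since $u_n(x)\in\N$ for a.e.\ $x$, dividing by $\deg_W(x)$ and taking floors in the identity $f(W, U^n\sigma)(x) = \lfloor (U^n\sigma)(x)/\deg_W(x) \rfloor$ yields the recursion
\begin{equation*}
u_{n+1}(W,\sigma)(x) = \left\lfloor \frac{\sigma(x) + \int_0^1 u_n(W,\sigma)(y)\, W(x,y)\, \dy}{\deg_W(x)} \right\rfloor
\end{equation*}
for a.e.\ $x$ with $\deg_W(x)>0$, while $u_{n+1}(x)=0$ where $\deg_W(x)=0$.

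From this recursion, the lemma follows by a second induction on $n$: the base case is $u_0\equiv 0$, and given $u_n(W,\sigma')\ge u_n(W,\sigma)$ a.e., the numerator of the recursion for $\sigma'$ dominates the one for $\sigma$ a.e., because $\sigma'\ge\sigma$ a.e.\ and $W\ge 0$. Monotonicity of the floor function then gives $u_{n+1}(W,\sigma')\ge u_{n+1}(W,\sigma)$ a.e.

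The main thing to be careful about is bookkeeping the countably many null sets on which the various a.e.\ identities fail; since a countable union of null sets is null, this causes no real difficulty. It is worth noting that the naive attempt to prove the stronger pointwise statement $U^n\sigma'\ge U^n\sigma$ a.e.\ does not work: firing an overloaded vertex replaces $\sigma(x)$ with a smaller fractional residue, so $U$ itself is not monotone. Hence one really needs to work with the closed-form recursion for $u_n$ rather than comparing $U^n\sigma$ and $U^n\sigma'$ directly.
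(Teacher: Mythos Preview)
Your proof is correct and follows essentially the same route as the paper's: both proceed by induction on $n$ using the identity $U^n\sigma(x)=\sigma(x)-\deg_W(x)\,u_n(x)+\int_0^1 u_n(y)W(x,y)\,dy$, then compare the floor expressions. Your repackaging via the closed-form recursion $u_{n+1}(x)=\big\lfloor(\sigma(x)+\int u_n(y)W(x,y)\,dy)/\deg_W(x)\big\rfloor$ is a slight streamlining---the paper instead carries the integer $k=u_n(W,\sigma')(x)-u_n(W,\sigma)(x)$ explicitly through the inequality $U^n\sigma(x)\le U^n\sigma'(x)+k\deg_W(x)$---but the underlying computation is the same.
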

\begin{proof} 
We proceed by induction on $n$. The statement is clear for $n = 0$, since $u_0(W, \sigma')(x) = u_0(W, \sigma)(x)=0$ for all $x \in [0, 1]$. 

Suppose that $u_n(W,\sigma')(x)\geq u_n(W,\sigma)(x)$ for almost all $x\in[0,1]$. Then almost all $x\in[0,1]$ has the properties that $\sigma'(x) \ge \sigma(x)$ and $u_n(W,\sigma')(x)\geq u_n(W,\sigma)(x)$. 
Fix such an $x\in[0,1]$ towards showing that $u_{n+1}(W, \sigma')(x) \ge u_{n + 1}(W, \sigma)(x)$. By induction hypothesis, $u_n(W,\sigma')(x)= u_n(W,\sigma)(x) + k$ for some $k\geq 0$, moreover,
\begin{align*}
U^n\sigma(x)=\sigma(x)-u_n(W,\sigma)(x)\deg_W(x) + \int_0^1 u_n(W,\sigma)(y) W(x,y) \dy\leq \\
\sigma'(x)-(u_n(W,\sigma')(x)-k)\deg_W(x) + \int_0^1 u_n(W,\sigma')(y) W(x,y) \dy = \\
U^n\sigma'(x) + k \deg_W(x).
\end{align*}

It follows that 
\begin{align*}
\begin{split}
    u_{n+1}(W,\sigma)(x) &=u_n(W,\sigma)(x) + \left\lfloor \frac{U^n\sigma(x)}{\deg_W(x)}\right\rfloor \leq u_n(W,\sigma)(x) + \left\lfloor \frac{U^n\sigma'(x)}{\deg_W(x)}\right\rfloor + k \\ &= u_n(W,\sigma')(x) + \left\lfloor \frac{U^n\sigma'(x)}{\deg_W(x)}\right\rfloor = u_{n+1}(W,\sigma')(x).
\end{split}
\end{align*}
This finishes the proof.
\end{proof}

\section{The finite diameter condition}

In this section we formulate a notion for graphons that is an analogue of the diameter of finite graphs. 
We will be able to give a sufficient condition for the existence of the activity of a chip configuration using this notion.

For a measurable set $A\subseteq [0,1]$, we denote by $\Gamma(A)$ the neighborhood of $A$ in $W$, i.e.
$$
\Gamma(A)=\{x\in [0,1]: \exists y\in A \text{ such that }W(y,x)>0\}.
$$

For $\varepsilon>0$, we denote by $\Gamma_\varepsilon(A)$ the set of those neighbors of $A$ that receive at least $\varepsilon$ chips by firing the set $A$ once, that is,
$$
\Gamma_\varepsilon(A)=\left\{x\in [0,1]: \int_{A} W(y,x) \dy\geq \varepsilon\right\}.
$$

We denote $\Gamma^k(A)=\underbrace{\Gamma \circ \dots \circ \Gamma}_k(A)$ and similarly for $\Gamma^k_\varepsilon(A)$.

The following definition plays a key role in our results. 
\begin{defn}[Finite diameter condition] \label{def:finite_diam_cond}
	A graphon $W:[0,1]^2\to [0,1]$ is said to have \emph{finite diameter}, if there is an $N \in \N$ such that for all measurable subset $A \subseteq [0,1]$ with $\lambda(A) > 0$ there exists $\varepsilon > 0$ with $\lambda(A \cup \Gamma_\varepsilon(A) \cup \Gamma^2_\varepsilon(A)\cup \dots \cup \Gamma^N_\varepsilon(A))=1$.
\end{defn}

It is reasonable to call the smallest such $N$ the \emph{diameter} of $W$, but we will not use this notion. There are many equivalent ways to define the finite diameter property. Above we tried to give the most natural definition. The following theorem gives two more equivalent formulations that will play a role in this paper. We also note that we could use $\Gamma'_\varepsilon(A)=A\cup \Gamma_\varepsilon(A)$ and get the same property.

\begin{thm} The following statements are equivalent for a graphon $W$:
  \label{t:finite diam equiv conditions}
  \begin{itemize}
    \item[(i)] $W$ has a finite diameter;
    \item[(ii)] there exist $N \in \N$ and $\varepsilon > 0$ such that for all measurable $A \subseteq [0, 1]$ with $\lambda(A) \ge \frac{1}{2}$, $\lambda(A \cup \Gamma_\varepsilon(A) \cup \Gamma^2_\varepsilon(A)\cup \dots \cup \Gamma^N_\varepsilon(A))=1$;
    \item[(iii)] $W$ is connected and there exists $\delta > 0$ such that $\deg_W(x) \ge \delta$ for almost all $x$.
  \end{itemize}
\end{thm}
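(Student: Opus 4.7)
The plan is to prove the cycle $(i) \Rightarrow (iii) \Rightarrow (ii) \Rightarrow (i)$, with the bulk of the work in $(iii) \Rightarrow (ii)$.

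For $(i) \Rightarrow (iii)$, both conclusions go by contradiction. If $W$ had a nontrivial disconnection $[0,1] = A \sqcup B$ with $W = 0$ on $A \times B$ a.e., then applying $(i)$ to $A$ would give some $\varepsilon > 0$ and the uniform $N$, but $\int_A W(y,x)\,dy = 0$ for a.e.\ $x \in B$ forces $\Gamma_\varepsilon(A) \subseteq A$ a.e., so the entire iteration stays inside $A$, contradicting full measure. For the degree bound, pick any $t > 0$ with $A_t := \{\deg_W \geq t\}$ of positive measure (such $t$ exists unless $W = 0$ a.e., which is incompatible with $(i)$), and apply $(i)$ to $A_t$ to obtain $\varepsilon, N$. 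Any $x$ with $\deg_W(x) < \min(t,\varepsilon)$ lies outside $A_t$ and outside every $\Gamma_\varepsilon^k(A_t)$, because $\int_S W(y,x)\,dy \leq \deg_W(x) < \varepsilon$ for any $S$; so $(i)$ forces $\lambda(\{\deg_W < \min(t,\varepsilon)\}) = 0$.

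For $(iii) \Rightarrow (ii)$, let $\delta > 0$ be an a.e.\ lower bound on $\deg_W$ and set $\varepsilon = \delta/2$. The starting point is a one-step cover lemma: if $\lambda(A) \geq 1 - \delta/2$, then $\lambda(A \cup \Gamma_\varepsilon(A)) = 1$. Writing $B := A^c \setminus \Gamma_\varepsilon(A)$, for $x \in B$ one has $\int_A W(y,x)\,dy < \varepsilon$ and $\deg_W(x) \geq \delta$, so $\int_{A^c} W(y,x)\,dy > \delta/2$; integrating over $B$ gives $(\delta/2)\lambda(B) < \int_B \int_{A^c} W(y,x)\,dy\,dx \leq \lambda(B)\lambda(A^c) \leq \lambda(B)\cdot \delta/2$, forcing $\lambda(B) = 0$. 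The remaining task is to show that, starting from $\lambda(A) \geq 1/2$, at most $N(\delta)$ iterations of $\Gamma_\varepsilon$ raise the measure above $1 - \delta/2$, after which one more step finishes. This reduces to a quantitative expansion estimate $\lambda(A_{k+1}) - \lambda(A_k) \geq c(\delta)$ while $\lambda(A_k) \in [1/2, 1 - \delta/2]$, which one derives by refining the same double-counting idea using the degree bound, controlling the trapped set $A_k^c \setminus \Gamma_\varepsilon(A_k)$ via its edge mass into $A_k^c$.

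For $(ii) \Rightarrow (i)$, first extract $(iii)$ from $(ii)$ by the same contamination argument applied to a set of measure $\geq 1/2$: take a small-measure set $C$ of low-degree or disconnected points and verify $(ii)$ applied to $[0,1] \setminus C'$ (for a small $C' \subseteq C$) cannot reach $C'$. Then, given any $A$ with $\lambda(A) > 0$, use the universal expansion $\lambda(\Gamma(A)) \geq \delta$ — which follows from $\int \deg_{W,A}\,dx = \int_A \deg_W\,dx \geq \delta\lambda(A)$ combined with $\deg_{W,A}(x) \leq 1$, since $\int_{\Gamma(A)} \deg_{W,A} \leq \lambda(\Gamma(A))$ — to grow $A$ into a set of measure $\geq \delta$ in one step, and then to $\geq 1/2$ in a further $K(\delta)$ steps via the quantitative expansion from the previous paragraph. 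After that, $(ii)$ finishes in $N$ more uniform steps, and choosing $\varepsilon_A > 0$ small enough ensures that all the $\Gamma_{\varepsilon_A}$-inclusions are realised simultaneously; the total step count $K(\delta) + N$ is uniform in $A$.

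The main obstacle is the quantitative expansion lemma in $(iii) \Rightarrow (ii)$. Connectedness alone gives a strict but purely qualitative increase $\lambda(A_{k+1}) > \lambda(A_k)$ for $\lambda(A_k) < 1$; converting this into a gap $\geq c(\delta)$ that is independent of $A_k$ is the delicate step, and it is where the degree lower bound is essential — one pairs the identity $e(A_k, A_k^c) = \int_{A_k}\deg_W - e(A_k, A_k) \geq \delta\lambda(A_k) - \lambda(A_k)^2$ with its $A_k^c$-symmetric counterpart and squeezes the measure of $A_k^c \setminus \Gamma_\varepsilon(A_k)$ between them.
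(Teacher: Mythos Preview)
Your quantitative expansion step in $(iii)\Rightarrow(ii)$ does not work as written. The inequality $e(A,A^c)\ge \delta\lambda(A)-\lambda(A)^2$ and its $A^c$-symmetric twin $e(A,A^c)\ge \delta\lambda(A^c)-\lambda(A^c)^2$ are both vacuous (negative right-hand side) throughout most of the target range $\lambda(A)\in[\tfrac12,1-\tfrac{\delta}{2}]$ once $\delta<\tfrac12$: the first is negative whenever $\lambda(A)>\delta$, the second whenever $\lambda(A^c)>\delta$, i.e.\ whenever $\lambda(A)\in(\delta,1-\delta)$. So no amount of ``pairing'' them yields a positive lower bound on $e(A,A^c)$, and hence no gap $\lambda(A_{k+1})-\lambda(A_k)\ge c(\delta)$ follows. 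In fact no constant depending only on $\delta$ can work: take $W=0.2$ on $[0,\tfrac12]^2\cup[\tfrac12,1]^2$ and $W=\eta$ on the off-diagonal blocks; then $\mindeg(W)\ge 0.1$ for every $\eta>0$, $W$ is connected, yet $e([0,\tfrac12],[\tfrac12,1])=\eta/4$ is arbitrarily small. The uniform-in-$A$ lower bound on $e(A,A^c)$ must depend on the specific graphon through its connectedness, not just through $\delta$.

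The paper extracts exactly this via a compactness argument: for a fixed connected $W$ and any $[a,b]\subset(0,1)$, one shows $\inf\{e(A,A^c):\lambda(A)\in[a,b]\}>0$ by taking a putative minimizing sequence $(A_n)$, passing to a weak$^*$ limit $f$ of $\mathbf 1_{A_n}$ in $L^\infty$ (Banach--Alaoglu), checking that $\int\!\!\int W(x,y)f(x)(1-f(y))\,dx\,dy=0$, and then carving out a genuine set $A$ from the level sets of $f$ that would disconnect $W$. Once that uniform cut bound is in hand, a Markov-style argument gives $\lambda(\Gamma'_{\varepsilon}(A)\setminus A)\ge \varepsilon'/2$ for all $A$ with $\lambda(A)\in[a,1-\tfrac{\delta}{2}]$, and the rest of your outline (one-step cover near full measure, bootstrapping small sets via the degree bound) goes through essentially as you wrote it. Your $(ii)\Rightarrow(i)$ reuses the same flawed expansion, so it inherits the gap; the fix is the same.
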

Notice that (ii) is different from (i) in that we require the existence of an $\varepsilon$ that is suitable for every ``large'' measurable set.

\begin{proof}
  First we prove that (i) and (ii) both imply (iii). 
  
  If $W$ is not connected and $A \cup B = [0, 1]$ is a partition witnessing this, then by supposing $\lambda(A) \ge 1/2$, we see that for each $N \in \N$ and each $\varepsilon > 0$, $\lambda(A \cup \Gamma_\varepsilon(A) \cup \Gamma^2_\varepsilon(A)\cup \dots \cup \Gamma^N_\varepsilon(A))= \lambda(A) < 1$, contradicting the assumptions of both (i) and (ii). 
  
  If the degrees of $W$ are not bounded from below (so for every $\varepsilon > 0$, $\lambda(\{x : \deg_W(x) < \varepsilon\}) > 0$) then the degrees are not bounded from below on $[0, \frac{1}{2})$ (i.e. for every $\varepsilon > 0$, $\lambda(\{x\in[0,\frac{1}{2}) : \deg_W(x) < \varepsilon\}) > 0$) or on $[\frac{1}{2}, 1]$. Suppose that they are not bounded from below on $[0, \frac{1}{2})$ and let $A = [\frac{1}{2}, 1]$. Then for every $\varepsilon > 0$, the set $B_\varepsilon = \{x \in [0, \frac{1}{2}) : \deg_W(x) < \varepsilon\}$ is of positive measure, so $\lambda(A \cup \Gamma_\varepsilon(A) \cup \Gamma^2_\varepsilon(A)\cup \dots \cup \Gamma^N_\varepsilon(A)) \le \lambda([0, 1] \setminus B_\varepsilon) < 1$ for any $N \in \N$. Thus we have proved the directions (i) $\Rightarrow$ (iii) and (ii) $\Rightarrow$ (iii).
  
  To show (iii) $\Rightarrow$ (i) and (iii) $\Rightarrow$ (ii), we first prove the following. 
  \begin{claim}
    \label{c:compactness}
    If $W$ is connected then for each interval $[a, b] \subset (0, 1)$ there exists $\varepsilon > 0$ so that $\int_A \int_{A^c} W(x, y) \dx\dy \ge \varepsilon$ for all measurable subset $A \subseteq [0, 1]$ with $\lambda(A) \in [a, b]$. 
  \end{claim}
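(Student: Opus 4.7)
The plan is to prove the claim by contradiction, using weak $L^2$-compactness of indicator functions together with compactness of the integral operator associated with $W$. Suppose the claim fails for some $[a,b]\subset(0,1)$: then there is a sequence of measurable sets $A_n\subseteq[0,1]$ with $\lambda(A_n)\in[a,b]$ whose cut values $\int_{A_n}\int_{A_n^c}W(x,y)\dx\dy$ tend to $0$. The indicator functions $\mathbf{1}_{A_n}$ form a bounded sequence in $L^2([0,1])$, so after passing to a subsequence we may assume $\mathbf{1}_{A_n}\rightharpoonup f$ weakly in $L^2$ for some measurable $f:[0,1]\to[0,1]$ (the bounds are preserved under weak limits), with $\int_0^1 f(x)\dx=\lim_n\lambda(A_n)\in[a,b]\subset(0,1)$.

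Next I would push the cut through the weak limit. Since $W$ is bounded, $W\in L^2([0,1]^2)$ and the integral operator $T_W g(x)=\int_0^1 W(x,y)g(y)\dy$ is Hilbert--Schmidt, hence compact, on $L^2([0,1])$; therefore $T_W\mathbf{1}_{A_n}\to T_Wf$ \emph{strongly} in $L^2$. Writing $\mathbf{1}_{A_n^c}=1-\mathbf{1}_{A_n}$, the cut splits as
\[
\int_{A_n}\int_{A_n^c}W(x,y)\dx\dy \;=\; \int_0^1 \deg_W(x)\mathbf{1}_{A_n}(x)\dx \;-\; \int_0^1 \mathbf{1}_{A_n}(x)\,(T_W\mathbf{1}_{A_n})(x)\dx.
\]
The first term is linear in $\mathbf{1}_{A_n}$ and so converges to $\int_0^1 f(x)\deg_W(x)\dx$ by weak convergence; the second, being the pairing of a weakly convergent sequence with a strongly convergent one, converges to $\int_0^1 f(x)(T_Wf)(x)\dx=\int_0^1\!\int_0^1 f(x)f(y)W(x,y)\dx\dy$. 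Thus the limit of the cut equals $\int_0^1\!\int_0^1 f(x)(1-f(y))W(x,y)\dx\dy$, which must therefore be $0$.

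Finally, since the integrand is nonnegative, $f(x)(1-f(y))W(x,y)=0$ for almost every $(x,y)$, and from this I would manufacture a disconnection of $W$. If $f$ is almost everywhere equal to a constant $c$, then $c=\int f\in(0,1)$, so $c(1-c)>0$ and we are forced to have $W=0$ a.e., contradicting connectedness (any positive-measure bipartition witnesses disconnection). Otherwise $f$ is not a.e.\ constant, so its distribution function $t\mapsto\lambda(\{f>t\})$ takes some value in $(0,1)$; picking such a $t$ and setting $B=\{x:f(x)>t\}$, on $B\times B^c$ we have $f(x)(1-f(y))\ge t(1-t)>0$, forcing $W=0$ a.e.\ on $B\times B^c$, again contradicting connectedness. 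The main obstacle is the middle step: the cut is bilinear in the indicators, so passing to the weak limit is not automatic, and the argument really relies on the compactness of $T_W$ to handle the quadratic term; the subsequent level-set argument is then routine.
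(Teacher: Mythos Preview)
Your proof is correct and follows the same overall architecture as the paper's: argue by contradiction, extract a weak limit $f$ of the indicators, show the bilinear cut $\int\!\int W(x,y)f(x)(1-f(y))\,dx\,dy$ vanishes, and then manufacture a disconnecting set from $f$. The difference lies in the execution of the two nontrivial steps.

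For passing the bilinear form through the limit, the paper works in $L^\infty$ with weak$^*$ convergence (via Banach--Alaoglu) and then carries out a direct $\varepsilon$-argument: it fixes $\varepsilon$, finds a large set $B$ on which $\int W(x,y)\mathbf{1}_{A_n}(x)\,dx$ is uniformly close to $\int W(x,y)f(x)\,dx$, and estimates the error on $B$ and $B^c$ separately. Your route via the Hilbert--Schmidt (hence compact) operator $T_W$ is more conceptual and considerably shorter: compactness upgrades $\mathbf{1}_{A_n}\rightharpoonup f$ to $T_W\mathbf{1}_{A_n}\to T_Wf$ in norm, so the weak--strong pairing handles the quadratic term in one line. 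This is a genuine simplification, at the cost of invoking a standard operator-theoretic fact the paper avoids.

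For the final step, the paper sandwiches a set $A$ with $\{f=1\}\subseteq A\subseteq\{f>0\}$ and $\lambda(A)\in[a,b]$, then checks the pointwise implication $f(x)(1-f(y))=0\Rightarrow \mathbf{1}_A(x)(1-\mathbf{1}_A(y))=0$; this handles all cases uniformly. Your level-set argument also works, but note a small imprecision: you need $t\in(0,1)$ to assert $t(1-t)>0$, whereas your phrasing only produces some $t$ with $\lambda(\{f>t\})\in(0,1)$. This is easily repaired (if $\lambda(\{f>t\})\in\{0,1\}$ for all $t\in(0,1)$ then $f$ is a.e.\ constant), or alternatively just observe that $f(x)(1-f(y))>0$ pointwise on $B\times B^c$ without a uniform lower bound, which already forces $W=0$ a.e.\ there.
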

  \begin{proof}
    Suppose towards a contradiction that for some $[a,b]$ there is a sequence of subsets $(A_n)_{n \in \N}$ such that $\lambda(A_n) \in [a, b]$ but $\int_{A_n} \int_{A_n^c} W(x, y) \dx\dy \to 0$. Our goal is to contradict the connectedness of $W$ by coming up with a subset $A \subseteq [0, 1]$ with $\lambda(A) \in [a, b]$ and $\int_{A} \int_{A^c} W(x, y) \dx\dy = 0$. 
    
    Since $\{\mathbf{1}_{A_n} : n \in \N\}$ is a bounded subset of $L^\infty([0, 1])$, and $L^\infty$ is the dual of $L^1$, there is a weak$^*$ convergent subsequence of $(\mathbf{1}_{A_n})_{n\in\mathbb{N}}$ tending to $f \in L^\infty$ by the Banach--Alaoglu theorem. We can suppose that the subsequence is the original one, hence 
    \begin{equation}
      \label{e:weak* limit}
      \int_0^1 g(x) \cdot \mathbf{1}_{A_n}(x) \dx \to \int_0^1 g(x) \cdot f(x) \dx
    \end{equation}
    for all $g \in L^1([0, 1])$. One can easily see that $f(x) \in [0, 1]$ for almost all $x \in [0, 1]$ and $\int_0^1 f(x) \dx \in [a, b]$ by using $g(x) = \mathbf{1}_{\{x : f(x) < 0\}}$, $g(x) = \mathbf{1}_{\{x : f(x) > 1\}}$ and $g(x) \equiv 1$ in \eqref{e:weak* limit}.
    
    It follows from our assumptions on $A_n$ that 
    \begin{align*}
    \int_0^1 \int_0^1 W(x, y) \mathbf{1}_{A_n}(x) \mathbf{1}_{A_n^c}(y) \dx \dy \\= \int_0^1 \int_0^1 W(x, y) \mathbf{1}_{A_n}(x) (1 - \mathbf{1}_{A_n}(y)) \dx \dy \to 0.
    \end{align*}
    Now we show that $\int_0^1 \int_0^1 W(x, y) f(x) (1 - f(y)) \dx \dy = 0$. The function $x \mapsto W(x, y)$ is in $L^1$ for almost all $y \in [0, 1]$, hence, using again \eqref{e:weak* limit}, $$\int_0^1 W(x, y) \mathbf{1}_{A_n}(x) \dx \to \int_0^1 W(x, y) f(x) \dx$$ for almost all $y$. For a fixed $\varepsilon > 0$, let $n_0$ be large enough so that for $$B = \left\{y : \forall n \ge n_0\quad \left| \int_0^1 W(x, y) \mathbf{1}_{A_n}(x) \dx - \int_0^1 W(x, y) f(x) \dx \right| \le \varepsilon\right\},$$ $\lambda(B) \ge 1 - \varepsilon$. Then for each $n \ge n_0$, 
    \begin{align*}
      \bigg|\int_0^1 \int_0^1 W(x, y) \mathbf{1}_{A_n}(x) (1 - \mathbf{1}_{A_n}(y)) \dx \dy \\- \int_0^1 \int_0^1 W(x, y) f(x) (1 - \mathbf{1}_{A_n}(y)) \dx \dy \bigg| \\ \le \int_B \left|\int_0^1 W(x, y) \mathbf{1}_{A_n}(x) \dx - \int_0^1 W(x, y) f(x) \dx \right| (1 - \mathbf{1}_{A_n}(y)) \dy \\ + \int_{B^c} \left|\int_0^1 W(x, y) \mathbf{1}_{A_n}(x) \dx - \int_0^1 W(x, y) f(x) \dx \right| (1 - \mathbf{1}_{A_n}(y)) \dy \le 2\varepsilon,
    \end{align*}
    using that every function here has values in $[0, 1]$ and that $\lambda(B^c) \le \varepsilon$. The function $y \mapsto \int_0^1 W(x, y) f(x) \dx$ is in $L^1$, hence $$\int_0^1 \int_0^1 W(x, y) f(x) (1 - \mathbf{1}_{A_n}(y)) \dx \dy \to \int_0^1 \int_0^1 W(x, y) f(x) (1 - f(y)) \dx \dy.$$ Thus, 
    \begin{align*}
      \limsup_{n \to \infty} \bigg|\int_0^1 \int_0^1 W(x, y) \mathbf{1}_{A_n}(x) (1 - \mathbf{1}_{A_n}(y)) \dx \dy \\ - \int_0^1 \int_0^1 W(x, y) f(x) (1 - f(y)) \dx \dy \bigg| \le 3 \varepsilon
    \end{align*}
    for every $\varepsilon > 0$, meaning that $$\int_0^1 \int_0^1 W(x, y) \mathbf{1}_{A_n}(x) (1 - \mathbf{1}_{A_n}(y)) \dx \dy \to \int_0^1 \int_0^1 W(x, y) f(x) (1 - f(y)) \dx \dy,$$ hence $\int_0^1 \int_0^1 W(x, y) f(x) (1 - f(y)) \dx \dy = 0$.
    
    Now we extract a subset from $f$. Since $\int_0^1 f(x) \dx \in [a, b]$ and $f(x) \in [0, 1]$ for almost all $x$, $\lambda(\{x : f(x) = 1\}) \le b$ and $\lambda(\{x : f(x) > 0\}) \ge a$. Hence, there is a measurable set $A$ with $\{x : f(x) = 1\} \subseteq A \subseteq \{x : f(x) > 0\}$ and $\lambda(A) \in [a, b]$. It is easy to check that if for some $(x, y) \in [0, 1]^2$, $f(x) (1 - f(y)) = 0$, then $\mathbf{1}_A(x) (1 - \mathbf{1}_{A}(y)) = 0$. Therefore $\int_0^1 \int_0^1 W(x, y) f(x) (1 - f(y)) \dx \dy = 0$ implies that $\int_0^1 \int_0^1 W(x, y) \mathbf{1}_A(x)(1 - \mathbf{1}_A(y)) \dx\dy = 0$, hence $A$ witnesses that $W$ is not connected, a contradiction. 
  \end{proof}
  
  Now we prove (iii) $\Rightarrow$ (i) and (iii) $\Rightarrow$ (ii). It is enough to prove the corresponding statements for the operator $\Gamma'_\varepsilon(A) = A \cup \Gamma_\varepsilon(A)$ in place of $\Gamma_\varepsilon(A)$, since one can easily see by induction on $k$ that $(\Gamma'_\varepsilon)^k(A) = A \cup \dots \cup  (\Gamma'_\varepsilon)^k(A) \subseteq A \cup \Gamma_{\varepsilon/N}(A) \cup \dots \cup \Gamma_{\varepsilon/N}^k(A)$ for each $k \le N$. 
  
  The following claim proves (ii) from (iii) and will also be used to prove (i).
  \begin{claim}
    \label{c:N eps for [a, 1]}
    If $W$ is a connected graphon, and $\deg_W(x) \ge \delta$ for almost every $x$ for some $\delta > 0$, then for every $a \in (0, 1]$ there exist $N \in \N$ and $\varepsilon > 0$ such that $\lambda((\Gamma'_\varepsilon)^N(A)) = 1$ for every measurable set $A$ with $\lambda(A) \in [a, 1]$. 
  \end{claim}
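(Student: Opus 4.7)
The plan is to combine Claim~\ref{c:compactness} with the uniform lower degree bound from (iii) via a two-regime argument on $\lambda(A)$: Claim~\ref{c:compactness} takes care of the regime where $\lambda(A)$ is bounded away from both $0$ and $1$, while the degree bound handles the regime where $\lambda(A)$ is close to $1$, a range in which Claim~\ref{c:compactness} is silent.

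Fix $\delta>0$ with $\deg_W\ge \delta$ almost everywhere. First I would treat the ``almost full'' regime $\lambda(A)\ge 1-\delta/2$. For a.e.\ $y\in A^c$,
$$\deg_W(y,A)=\deg_W(y)-\deg_W(y,A^c)\ge \delta-\lambda(A^c)\ge \delta/2,$$
using $W\le 1$ to bound $\deg_W(y,A^c)\le \lambda(A^c)$. Hence for any $\varepsilon\le \delta/2$ one has $A^c\subseteq \Gamma_\varepsilon(A)$ up to a null set, so $\lambda(\Gamma'_\varepsilon(A))=1$ in one step.

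Next, for the moderate regime, assume $a<1-\delta/2$ and apply Claim~\ref{c:compactness} to the closed subinterval $[a,1-\delta/2]\subset (0,1)$ to obtain $\varepsilon_1>0$ such that $\int_{A^c}\deg_W(y,A)\,dy\ge \varepsilon_1$ whenever $\lambda(A)\in [a,1-\delta/2]$ (by symmetry of $W$). Splitting $A^c$ into $\Gamma_\varepsilon(A)\cap A^c$ and $A^c\setminus \Gamma_\varepsilon(A)$, using the trivial bound $\deg_W(y,A)\le 1$ on the first piece and $\deg_W(y,A)<\varepsilon$ on the second, yields
$$\lambda(\Gamma'_\varepsilon(A)\setminus A)\ge \varepsilon_1-\varepsilon.$$
Choosing $\varepsilon:=\min(\varepsilon_1/2,\delta/2)$ then guarantees a gain of at least $\varepsilon_1/2$ per step as long as we remain in the moderate regime.

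To finish, I iterate from $A$ with $\lambda(A)\in [a,1]$: as long as $\lambda((\Gamma'_\varepsilon)^k(A))<1-\delta/2$ the measure grows by at least $\varepsilon_1/2$, so after at most $\lceil 2(1-\delta/2-a)/\varepsilon_1\rceil$ steps we enter the almost-full regime, and one additional step yields measure $1$; taking $N$ to be this total works (and $N=1$ suffices trivially when $a\ge 1-\delta/2$). The main conceptual obstacle is recognizing that Claim~\ref{c:compactness} on its own is insufficient, since it only provides a quantitative spectral-gap-type estimate on closed subintervals of $(0,1)$ and therefore cannot preclude the measure creeping arbitrarily close to $1$ without ever reaching it; the uniform degree bound in (iii) is precisely the extra ingredient that closes this last gap.
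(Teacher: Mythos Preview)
Your proof is correct and follows essentially the same approach as the paper: both use the degree bound to handle the regime $\lambda(A)\ge 1-\delta/2$ in one step, then invoke Claim~\ref{c:compactness} on $[a,1-\delta/2]$ together with a Markov-type splitting to guarantee a fixed measure increment per iteration of $\Gamma'_\varepsilon$, and finally choose $\varepsilon=\min(\varepsilon_1/2,\delta/2)$ and iterate. The only differences are cosmetic (you phrase the increment estimate as $\lambda(\Gamma'_\varepsilon(A)\setminus A)\ge \varepsilon_1-\varepsilon$ before specializing $\varepsilon$, and your step count $\lceil 2(1-\delta/2-a)/\varepsilon_1\rceil$ is slightly tighter than the paper's $\lceil 2/\varepsilon_1\rceil$).
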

  \begin{proof}
    Using the lower bound on the degree,  
    \begin{equation}
      \label{e:from 1-delta/2 to 1}
      \text{$\lambda(\Gamma'_\varepsilon(A)) = 1$ for every measurable $A$ with $\lambda(A) \ge 1 - \frac{\delta}{2}$ and $\varepsilon \le \frac{\delta}{2}$.}
    \end{equation}

      Now let $\varepsilon' > 0$ be given by Claim \ref{c:compactness} for $[a, b] = [a, 1 - \frac{\delta}{2}]$, then for every measurable subset $A$ with $\lambda(A) \in [a, 1 - \frac{\delta}{2}]$, $\int_0^1 \int_0^1W(x, y) \mathbf{1}_A(x) \mathbf{1}_{A^c}(y) \dx\dy \ge \varepsilon'$. Since $\int_0^1 W(x, y) \mathbf{1}_A(x) \dx \le 1$ for almost all $y \in A^c$, 
    $$
      \lambda\left(\left\{y \in A^c : \int_0^1 W(x, y) \mathbf{1}_A(x) \dx \ge \frac{\varepsilon'}{2}\right\}\right) \ge \frac{\varepsilon'}{2}.
    $$
    In other words, $\lambda(\Gamma'_{\varepsilon'/2}(A) \setminus A) \ge \frac{\varepsilon'}{2}$ for every measurable subset $A$ with $\lambda(A) \in [a, 1 - \frac{\delta}{2}]$. Then, $\lambda((\Gamma'_{\varepsilon'/2})^{\lceil 2/\varepsilon' \rceil}(A)) \in [1 - \frac{\delta}{2}, 1]$ for every such $A$. Therefore, also using \eqref{e:from 1-delta/2 to 1}, $N = \lceil \frac{2}{\varepsilon'} \rceil + 1$ and $\varepsilon = \min\{\frac{\varepsilon'}{2}, \frac{\delta}{2}\}$ satisfy the claim. 
  \end{proof}
  
  The proof of (iii) $\Rightarrow$ (ii) is complete using the claim, so now we move on to show (iii) $\Rightarrow$ (i). The extra difficulty comes from sets of small measure. If $A$ is a measurable subset with $0 < \lambda(A) \le \frac{\delta}{3}$ then for almost all $x \in A$, $\int_0^1 W(x, y) \mathbf{1}_{A^c}(y) \dy \ge \frac{2\delta}{3}$, hence $\int_0^1 \int_0^1 W(x, y) \mathbf{1}_A(x) \mathbf{1}_{A^c}(y) \dx\dy \ge \frac{2\delta\lambda(A)}{3}$. Let $$B = \left\{y \in A^c : \int_0^1 W(x, y) \mathbf{1}_A(x) \dx \ge \frac{\delta\lambda(A)}{3}\right\}.$$ Since $\int_0^1 W(x, y) \mathbf{1}_A(x) \dx \le \lambda(A)$ for almost all $y \in A^c$,
  \begin{align*}
    \frac{2\delta\lambda(A)}{3} \le \int_0^1 \int_0^1 W(x, y) \mathbf{1}_A(x) \mathbf{1}_{A^c}(y) \dx\dy \le \lambda(B) \cdot \lambda(A) + \frac{\delta\lambda(A)}{3},
  \end{align*}
  therefore $\lambda(B) \ge \frac{\delta}{3}$, and thus $\lambda(\Gamma'_{\frac{\delta\lambda(A)}{3}}(A)) \ge \frac{\delta}{3}$. 
  
  Now we apply Claim \ref{c:N eps for [a, 1]} with $a = \frac{\delta}{3}$ to get $N'$ and $\varepsilon'$ such that $\lambda((\Gamma'_{\varepsilon'})^{N'}(A)) = 1$ for every $A$ with $\lambda(A) \ge \frac{\delta}{3}$. Then for any subset $A$ of positive measure, $N = N' + 1$ (that is independent of $A$) and $\varepsilon = \min\{\varepsilon', \frac{\delta\lambda(A)}{3}\}$ work, $\lambda((\Gamma'_\varepsilon)^N(A)) = 1$. 
\end{proof}

Let us point out a nice property of graphons with finite diameter. In many respect, unbounded chip configurations are inconvenient. However, for a graphon with finite diameter, any chip configuration becomes bounded after one step of the parallel chip-firing. 

\begin{lemma}
  \label{l:boundedness after one firing}
 If a graphon $W$ has $\mindeg(W) = d >0$, then for any chip configuration $\sigma$ on $W$, $n \geq 1$ and almost all $x \in [0, 1]$, we have $$U^n\sigma(x) \leq \deg_W(x) + \frac{\|\sigma\|_1}{d} \leq 1 + \frac{\|\sigma\|_1}{d}.$$
\end{lemma}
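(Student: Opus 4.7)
The plan is to first establish the bound for $n=1$ directly from the update formula, then iterate using the fact that the $L^1$-norm is preserved under firing (Claim \ref{c:L1_norm_stays_the_same_after_firing}).

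For the base case $n=1$, fix $x$ with $\deg_W(x) > 0$ and examine
\[
U\sigma(x) = \bigl(\sigma(x) - \deg_W(x) f(x)\bigr) + \int_0^1 f(y)\,W(x,y)\,\dy.
\]
The first parenthesized term is the ``residual'' left at $x$ after firing; by the very definition $f(x) = \lfloor \sigma(x)/\deg_W(x) \rfloor$, this residual lies in $[0, \deg_W(x))$. For the integral term, use $f(y) \le \sigma(y)/\deg_W(y) \le \sigma(y)/d$ (valid a.e.\ since $\mindeg(W) = d$) together with $W(x,y) \le 1$ to get
\[
\int_0^1 f(y)\,W(x,y)\,\dy \;\le\; \frac{1}{d}\int_0^1 \sigma(y)\,W(x,y)\,\dy \;\le\; \frac{\|\sigma\|_1}{d}.
\]
Summing the two contributions yields $U\sigma(x) \le \deg_W(x) + \|\sigma\|_1/d$ for almost every $x$. (On the exceptional set where $\deg_W(x) = 0$, we have $f(x) = 0$ and $W(x,\cdot) = 0$ a.e., so the same bound trivially holds.)

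For the inductive step, apply the base case to the configuration $U^{n-1}\sigma$ in place of $\sigma$: by Claim \ref{c:L1_norm_stays_the_same_after_firing}, $\|U^{n-1}\sigma\|_1 = \|\sigma\|_1$, so
\[
U^n\sigma(x) = U(U^{n-1}\sigma)(x) \;\le\; \deg_W(x) + \frac{\|U^{n-1}\sigma\|_1}{d} \;=\; \deg_W(x) + \frac{\|\sigma\|_1}{d}
\]
for almost every $x$. The second inequality in the lemma then follows from $\deg_W(x) = \int_0^1 W(x,y)\,\dy \le 1$, since $W$ takes values in $[0,1]$.

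There is no real obstacle here; the only subtlety worth attention is to make sure the bound $f(y) \le \sigma(y)/d$ is used against the full $L^1$ mass rather than against $\deg_W(y)$, which is what allows $\|\sigma\|_1/d$ (and not a possibly larger quantity) to appear on the right-hand side. Invoking the $L^1$-preservation of $U$ is what prevents the bound from deteriorating with $n$.
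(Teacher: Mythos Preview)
Your proof is correct and follows essentially the same approach as the paper: reduce to the case $n=1$ via the $L^1$-preservation of $U$ (Claim~\ref{c:L1_norm_stays_the_same_after_firing}), then bound the residual $\sigma(x)-\deg_W(x)f(x)$ by $\deg_W(x)$ and the incoming integral by $\|\sigma\|_1/d$ using $W\le 1$ and $f(y)\le \sigma(y)/d$. The only cosmetic difference is that you phrase the reduction as an induction, while the paper simply notes that the $n=1$ case suffices.
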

\begin{proof}
Since by Claim \ref{c:L1_norm_stays_the_same_after_firing}, $\|U\sigma
\|_1=\|\sigma\|_1$, it is enough to prove the statement for $n=1$.

$U\sigma(x) = \sigma(x) - \deg_W(x)f(x)  + \int_0^1 f(y) W(x,y)\dy$ where
\begin{equation*}
  f(y) = \left\{ \begin{array}{cl}
    \left\lfloor \frac{\sigma(y)}{\deg_W(y)}\right\rfloor & \text{if }\deg_W(y) > 0 \\
    0 & \text{if }\deg_W(y) = 0.
  \end{array}\right.\ 
\end{equation*}

For an $x$ where $\deg_W(x) > 0$, we have $\sigma(x) - \deg_W(x)f(x)\leq \deg_W(x)$. Also, as $W(x,y)\leq 1$ for each $x,y\in[0,1]$, we have $\int_0^1 f(y) W(x,y)\dy\leq \int_0^1 f(y)\dy\leq \int_0^1 \frac{\sigma(y)}{d} \dy=\frac{1}{d}\|\sigma\|_1$ as $\sigma$ is almost everywhere nonnegative.
\end{proof}

\section{Existence of the activity}
\label{s:existence of activity}

In this section we investigate the existence of the activity of a chip configuration on a graphon. Recall that by definition the activity of a chip configuration $\sigma$ on a graphon $W$  exists and is equal to $a \in \R$ if $\lim_{n \to \infty} \frac{u_n(x)}{n}$ exists and is equal to $a$ for almost all $x \in [0, 1]$. 

First, we construct an example showing that the connectedness of the graphon is not sufficient for the activity to exist. However, we show that if $W$ is connected and $\frac{\sigma(x)}{\deg_W(x)}$ is bounded then $\liminf\frac{u_n(x)}{n}$ is the same for almost every $x \in [0, 1]$, and the same holds for $\limsup\frac{u_n(x)}{n}$. 
In the main result of this section, Theorem \ref{t:activity_exists_with_FDC}, we show that the finite diameter condition implies the existence of the activity for any chip configuration. 

\subsection{An example where the activity does not exist}
\label{ss:example: activity does not exists}
\begin{prop}\label{p:example for activity to not exist}
  There exist a connected graphon $W$ and a bounded chip configuration $\sigma$ on $W$ such that the activity of $\sigma$ does not exist. 
\end{prop}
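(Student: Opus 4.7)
By Theorem \ref{t:finite diam equiv conditions}, a connected graphon $W$ fails the finite diameter condition iff $\mindeg(W)=0$, so the plan is to construct such a $W$ together with a bounded $\sigma$ whose pointwise odometer rates $u_n(x)/n$ stabilize to different values on different parts of $[0,1]$. The idea is to realize $W$ as a countable collection of nearly isolated "pieces" of vanishing internal degree, glued by bridges thin enough that the dynamics on each piece is almost decoupled.

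\textbf{Construction.} Partition $[0,1]$ into measurable pieces $A_k$, $k\ge 1$, with $\lambda(A_k)=2^{-k}$. Set $W\equiv 1$ on each $A_k\times A_k$, so that restricted to $A_k$ the graphon mimics a constant-$1$ graphon of measure (and internal degree) $2^{-k}$. On $A_k\times A_\ell$ with $k\ne\ell$, set $W(x,y)=\eta_{k,\ell}$, a positive but extremely small constant (e.g.\ $\eta_{k,\ell}=2^{-10(k+\ell)}$) chosen so that $W$ is connected while the cross-piece contribution to the degree is negligible; in particular $\deg_W(x)\approx 2^{-k}$ for $x\in A_k$, so $\mindeg(W)=0$. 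Define $\sigma\equiv c_k$ on $A_k$ with $c_k$ chosen strictly between two consecutive integer multiples of $\lambda(A_k)$, i.e.\ $c_k=m_k\lambda(A_k)+\tfrac12\lambda(A_k)$, where $m_k$ alternates between two positive integers (say $m_k=1$ on odd $k$, $m_k=2$ on even $k$). The values $c_k\le 3\cdot 2^{-k}\le 3/2$ are uniformly bounded, so $\sigma\in L^\infty\subseteq \chip(W)$.

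\textbf{Verification.} On the uncoupled graphon $W_0$ obtained by setting every $\eta_{k,\ell}=0$, each $A_k$ decouples into a constant-$1$ sub-graphon of measure $2^{-k}$ on which the configuration $c_k$ is a period-$1$ fixed point firing exactly $m_k$ times per step; a direct computation using $U\sigma(x)=c_k-\lambda(A_k)m_k+\lambda(A_k)m_k=c_k$ verifies this. Hence $u_n(W_0,\sigma)(x)/n = m_k$ for every $x\in A_k$ and every $n\ge 1$. Turning on the tiny couplings $\eta_{k,\ell}$, one shows by induction on $n$ that $\|U^n\sigma - \sigma\|_\infty$ on $A_k$ stays well below the "firing-threshold gap" $\tfrac12\lambda(A_k)=2^{-k-1}$, so no point of $A_k$ ever jumps to a different firing value than in the uncoupled system; comparing odometers one step at a time yields $u_n(W,\sigma)(x)/n\to m_k$ for a.e.\ $x\in A_k$ as well. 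Since the set of $x$ with limit $1$ and the set with limit $2$ both have positive measure, no single $a\in\R$ can serve as the activity, so $a(W,\sigma)$ does not exist.

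\textbf{Main obstacle.} The heart of the proof is the uniform-in-$n$ perturbation estimate: even though the bridge weights are minuscule, they are positive and cross-piece chip migration accumulates over infinite time, so one must control the sup-norm distance between the coupled and uncoupled configurations for all $n$. A one-step calculation gives $U\sigma(x)=c_k+\sum_{\ell\ne k}(m_\ell-m_k)\eta_{k,\ell}\lambda(A_\ell)$ on $A_k$, and with $\eta_{k,\ell}$ chosen so rapidly decaying, the per-step drift is tiny; the delicate point is to feed this into an argument—using the mass conservation of Claim \ref{c:L1_norm_stays_the_same_after_firing} and the monotonicity of Lemma \ref{l:more_chips_fire_more}—showing that chips swapped between pieces are reabsorbed into the same period-$1$ orbit rather than accumulating linearly in $n$. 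Once this uniform confinement is established, the limit $m_k$ is preserved on each $A_k$ and non-existence of the activity follows.
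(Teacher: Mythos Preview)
Your approach cannot work, and the obstruction is not a matter of delicacy but a genuine contradiction with Theorem~\ref{t:liminf, limsup}. In your construction $\sigma(x)/\deg_W(x)\approx m_k+\tfrac12\le \tfrac52$ is uniformly bounded, and $W$ is connected; Theorem~\ref{t:liminf, limsup} then forces $\liminf u_n(x)/n$ and $\limsup u_n(x)/n$ to be \emph{constant} almost everywhere. Hence it is impossible that $u_n(x)/n\to m_k$ on $A_k$ with $m_k$ taking two distinct values on sets of positive measure. Any valid example must exhibit non-existence of the activity by having $\liminf\ne\limsup$ at (almost) every point simultaneously, not by having different limits on different pieces.

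One can also see the failure directly in your perturbation estimate. As long as every $x\in A_k$ fires exactly $m_k$ times per step, the net one-step change on $A_k$ is the constant
\[
\Delta_k=\sum_{\ell\ne k}(m_\ell-m_k)\,\eta_{k,\ell}\,\lambda(A_\ell),
\]
so after $n$ steps $U^n\sigma(x)=c_k+n\Delta_k$ on $A_k$: the drift is \emph{linear} in $n$, not bounded. For odd $k$ (where $m_k=1$) one has $\Delta_k>0$, and no matter how small the $\eta_{k,\ell}$ are chosen, $n\Delta_k$ eventually exceeds the firing-threshold gap $\tfrac12\lambda(A_k)$, after which points of $A_k$ fire twice rather than once. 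The ``reabsorption'' you hope for via Claim~\ref{c:L1_norm_stays_the_same_after_firing} and Lemma~\ref{l:more_chips_fire_more} does not occur; mass conservation is global and says nothing about confinement to a piece, and monotonicity goes the wrong way for bounding drift. This is precisely the mechanism by which the Markov-chain argument behind Theorem~\ref{t:liminf, limsup} equalizes the asymptotic firing rates across a connected graphon.

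The paper's construction is of a completely different nature: it builds a ``bi-infinite path'' graphon (sets $A_m$, $m\in\Z$, with $W\equiv 1$ on $A_m\times A_{m\pm1}$ and $0$ elsewhere) and a chip configuration whose parallel update acts as a shift on a symbolic pattern. By choosing that pattern with long alternating blocks one obtains $\liminf u_n(x)/n=\tfrac12$ and $\limsup u_n(x)/n=1$ for \emph{every} $x$, consistent with Theorem~\ref{t:liminf, limsup}.
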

\begin{proof}
In our construction we will have $\liminf\frac{u_n(x)}{n} = \frac{1}{2}$ and $\limsup \frac{u_n(x)}{n}= 1$ for each $x \in [0, 1]$. 

Let $\bigcup_{m \in \Z} A_m$ be a measurable partition of $[0, 1]$ with $\lambda(A_m) > 0$ for each $m \in \Z$, and let us denote by $m : [0, 1] \to \Z$ the unique function with $x \in A_{m(x)}$ for every $x \in [0, 1]$. Then let $W(x, y) = 1$ if and only if $|m(x) - m(y)| = 1$ and let $W(x, y) = 0$ otherwise. It is easy to see that $W$ is connected.

We say that a set $A_m$ is of type 1 with respect to a chip configuration $\rho$ if $\rho(x) = \lambda(A_{m - 1})$ for each $x \in A_m$. We say that $A_m$ is of type 2 if $\rho(x) = \lambda(A_{m - 1}) + \lambda(A_{m + 1})$ for each $x \in A_m$, and it is of type 3 if $\rho(x) = 2\cdot \lambda(A_{m - 1}) + \lambda(A_{m + 1})$ for each $x \in A_m$. In our example, we will choose the starting configuration $\sigma$ so that for each $n$, each set $A_m$ is of type $i$ for some $i$ with respect to $U^n\sigma$. It is clear that every such chip configuration is bounded.

Now let $Z_1 \cup Z_2 \cup Z_3 = \Z$ be a partition of the integers with the property that 
\begin{equation}
\label{e:zpart}
m \in Z_1 \Leftrightarrow m + 1 \in Z_3.
\end{equation}

We define the chip configuration $\sigma$ in the following way:
\begin{equation*}
  \sigma(x) = \left\{\begin{array}{cl} 
    \lambda(A_{m(x) - 1}) & \text{if $m(x) \in Z_1$}, \\ 
    \lambda(A_{m(x) - 1}) + \lambda(A_{m(x) + 1}) & \text{if $m(x) \in Z_2$},\\ 
    2\cdot \lambda(A_{m(x) - 1}) + \lambda(A_{m(x) + 1}) & \text{if $m(x) \in Z_3$},
  \end{array}
  \right.
\end{equation*}
that is, the type of $A_m$ is $i$ with respect to $\sigma$ if and only if $m \in Z_i$.

Using \eqref{e:zpart}, we have that 
\begin{equation}
  \label{e:spart}
  \text{$A_m$ is of type 1 if and only if $A_{m + 1}$ is of type 3 with respect to $\sigma$.}
\end{equation}

\begin{claim}
  \label{c:types}
  Suppose that a configuration $\rho$ satisfies \eqref{e:spart} and also that each $A_m$ is of some type with respect to $\rho$. Then for each $m \in \Z$, $A_m$ is of type $i$ with respect to $U\rho$ if and only if $A_{m + 1}$ is of type $i$ with respect to $\rho$. 
\end{claim}
\begin{proof}
  We distinguish multiple cases according to the type of $A_m$ with respect to $\rho$. 
  
  If $A_{m}$ is of type 1 with respect to $\rho$ then using \eqref{e:spart}, $A_{m + 1}$ is of type 3 and $A_{m - 1}$ is of type 2 or 3. Hence, starting from $\rho$, the points of $A_{m - 1} \cup A_{m + 1}$ can fire once, but the points of $A_m$ cannot fire at all. Therefore, $U\rho(x) = 2\cdot \lambda(A_{m - 1}) + \lambda(A_{m + 1})$ for each $x \in A_m$, thus $A_m$ is indeed of type 3 with respect to $U\rho$, as is $A_{m + 1}$ with respect to $\rho$. 
  
  If $A_m$ is of type 2 with respect to $\rho$ then again using \eqref{e:spart}, $A_{m - 1}$ is of type 2 or type 3, and $A_{m + 1}$ is of type 1 or type 2. Since in this case the points of $A_{m - 1} \cup A_m$ can fire once, for each $x \in A_m$, $U\rho(x) = \lambda(A_{m - 1})$ if $A_{m + 1}$ is of type 1, and $U\rho(x) = \lambda(A_{m - 1}) + \lambda(A_{m + 1})$ if $A_{m + 1}$ is of type 2. Thus the proof is also complete in this case.
  
  If $A_m$ is of type 3 with respect to $\rho$ then by \eqref{e:spart}, $A_{m - 1}$ is of type 1, and $A_{m + 1}$ is of type 1 or type 2. Hence, for each $x \in A_m$, $U\rho(x) = \lambda(A_{m - 1})$ if $A_{m + 1}$ is of type 1, and $U\rho(x) = \lambda(A_{m - 1}) + \lambda(A_{m + 1})$ if $A_{m + 1}$ is of type 2. Thus the proof of our claim is complete. 
\end{proof}

It is easy to prove by induction on $n$ using Claim \ref{c:types}, that for each $n$, every set $A_m$ is of some type with respect to $U^n\sigma$ and also that $U^n\sigma$ satisfies \eqref{e:spart}. It is also clear that $u_n(x)$ equals the cardinality of the set $(Z_2 \cup Z_3) \cap \{m(x), m(x) + 1, \dots, m(x) + n - 1\}$. The only thing that remains to finish the construction, is to choose the partition $\mathbb{Z}=Z_1 \cup Z_2 \cup Z_3$ such that \eqref{e:zpart} is satisfied and also for every $x$ the liminf of $|(Z_2 \cup Z_3) \cap \{m(x), m(x) + 1, \dots, m(x) + n - 1\}|$ as $n$ tends to infinity is $1/2$ and the limsup is $1$. Let for example $$Z_2 = \{n \in \Z : n \le 1\} \cup \bigcup_{k = 1}^\infty \{n \in \Z : (2k + 1)! \le n < (2k + 2)!\},$$
where $k!$ denotes the factorial of $k$. We can add the remaining integers alternatingly to $Z_1$ and $Z_3$ satisfying \eqref{e:zpart}. It is straightforward to check that this construction satisfies the above requirements.
\end{proof}

\iftrue
\subsection{About the $\liminf$ and $\limsup$}
\label{s:liminf limsup}

In this section we prove the following theorem.

\begin{thm}
 \label{t:liminf, limsup}
 If $W$ is a connected graphon and $\sigma$ is a chip configuration on $W$ such that $\frac{\sigma(x)}{\deg_W(x)} < K$ almost everywhere, then there are real numbers $\underline{u}$, $\overline{u} \in [0, 1]$ such that $\liminf \frac{u_n(x)}{n} = \underline{u}$ and $\limsup \frac{u_n(x)}{n} = \overline{u}$ for almost all $x \in [0, 1]$.
\end{thm}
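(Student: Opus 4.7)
The plan is to show that $\underline{u}(x) := \liminf_n u_n(x)/n$ and $\overline{u}(x) := \limsup_n u_n(x)/n$ are both \emph{harmonic} in the graphon sense, namely $\deg_W(x) h(x) = \int_0^1 h(y) W(x,y) \dy$ a.e., and then to derive a.e.\ constancy from a maximum principle on the connected graphon $W$.

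Telescoping the update rule gives the identity $U^n\sigma(x) = \sigma(x) - \deg_W(x) u_n(x) + \int_0^1 u_n(y) W(x,y) \dy$ for a.e.\ $x$. Since $U^n\sigma \ge 0$, one gets $\deg_W(x) u_n(x) \le \sigma(x) + \int_0^1 u_n(y) W(x,y) \dy$; and since the floor satisfies $U^n\sigma(x)/\deg_W(x) < f(W, U^n\sigma)(x) + 1 = u_{n+1}(x) - u_n(x) + 1$, rearranging yields $\int_0^1 u_n(y) W(x,y) \dy < \deg_W(x) u_{n+1}(x) + \deg_W(x) - \sigma(x)$. Dividing both by $n$ and applying Fatou's lemma on the liminf side immediately gives the superharmonic bound $\int \underline{u}(y) W(x,y) \dy \le \deg_W(x) \underline{u}(x)$ for a.e.\ $x$. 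The matching subharmonic bound $\deg_W(x) \overline{u}(x) \le \int \overline{u}(y) W(x,y) \dy$ requires taking $\limsup$ inside the integral on the other inequality. Here the hypothesis $\sigma/\deg_W < K$ is used: chip conservation (Claim~\ref{c:L1_norm_stays_the_same_after_firing}) together with $f \le \sigma/\deg_W$ gives $\int f(W, U^i\sigma) \deg_W \dx \le \|\sigma\|_1$, hence $\int (u_n/n) \deg_W \dx \le \|\sigma\|_1$; combined with $\sigma \le K \deg_W$ this should supply enough uniform integrability of $u_n(y)/n$ against $W(x,y)\dy$ for a.e.\ $x$ to apply the reverse Fatou inequality.

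Once both one-sided inequalities are established, integrating each over $x$ and using Fubini with the symmetry of $W$ shows that the two sides have identical integrals $\int h \deg_W \dx$, so the one-sided inequalities are in fact equalities a.e.; thus $\underline{u}$ and $\overline{u}$ are both nonnegative harmonic functions on $W$. For the maximum principle, suppose $h \ge 0$ is harmonic on the connected $W$ but not a.e.\ constant, and pick $c \ge 0$ with $\lambda(A) \in (0, 1)$ where $A := \{x : h(x) > c\}$. Integrating the harmonic identity over $A$ and cancelling the $A$-$A$ contribution on both sides (by Fubini and symmetry) yields
\[ \int_A h(x) \deg_W(x, A^c) \dx = \int_{A^c} h(y) \deg_W(y, A) \dy. \]
By connectedness $\int_A \deg_W(x, A^c) \dx = \int_{A^c} \deg_W(y, A) \dy =: \alpha > 0$, and moreover $\deg_W(\cdot, A^c) > 0$ on a positive-measure subset of $A$; hence the left-hand side is strictly larger than $c\alpha$ while the right-hand side is at most $c\alpha$, a contradiction. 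This forces $\underline{u}$ and $\overline{u}$ each to be a.e.\ constant. The main obstacle I anticipate is the rigorous justification of the subharmonic inequality for $\overline{u}$: pushing $\limsup$ through the integral is not free, and the hypothesis $\sigma/\deg_W < K$ must be exploited carefully, either via a uniform integrability argument on $u_n/n$ against $W(x,\cdot)\dy$ or through a truncation-and-subsequence scheme.
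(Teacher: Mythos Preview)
Your approach is correct and is genuinely different from (and more elementary than) the paper's. The paper proceeds by defining the random walk on $[0,1]$ with transition density $p(x,y)=W(x,y)/\deg_W(x)$, proves irreducibility, identifies the stationary distribution $\pi\propto\deg_W$, and invokes a general state-space Markov chain convergence theorem to get $\|p^k(x,\cdot)-d_\pi\|_1\to 0$ for a.e.\ $x$ (with a separate treatment of the bipartite case). It then sandwiches $u_n(x)-u_n(x')$ between expressions involving $\int p^k(x,y)u_{n-k}(y)\dy$ and $\int p^k(x',y)u_{n-k}(y)\dy$, whose difference is controlled by $\|p^k(x,\cdot)-p^k(x',\cdot)\|_1$. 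Your harmonic-function/maximum-principle argument avoids all of this machinery, in particular the bipartite case split, and gives a cleaner conceptual explanation: $\underline u$ and $\overline u$ are respectively super- and subharmonic for the kernel $W$, and integrating against $\deg_W$ forces equality, after which connectedness rules out nonconstant harmonic functions.

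One point to tighten: the reverse Fatou step for $\overline u$ is actually trivial once you note that the hypothesis $\sigma/\deg_W<K$ propagates, since for a.e.\ $x$,
\[
\frac{U^{n+1}\sigma(x)}{\deg_W(x)}\;<\;1+\frac{1}{\deg_W(x)}\int_0^1 (K-1)W(x,y)\,dy\;=\;K,
\]
so $u_n(x)\le (K-1)n$ pointwise a.e. This gives the uniform pointwise bound $u_n(y)/n\le K-1$, hence $(u_n(y)/n)W(x,y)\le K-1$ is dominated by an integrable constant and reverse Fatou applies directly. Your suggested detour through $L^1$ bounds and uniform integrability is unnecessary (and, as stated, would not by itself yield the needed domination for a fixed $x$). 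With this observation your argument is complete.
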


We conjecture that this statement holds more generally, for any chip configuration.

\begin{conject}
  If $W$ is a connected graphon and $\sigma$ is a chip configuration on $W$, then there are real numbers $\underline{u}$, $\overline{u} \in [0, 1]$ such that $\liminf \frac{u_n(x)}{n} = \underline{u}$ and $\limsup \frac{u_n(x)}{n} = \overline{u}$ for almost all $x \in [0, 1]$.
\end{conject}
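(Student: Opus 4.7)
Set $v_n(x) := u_n(x)/n$, $g(x) := \liminf_n v_n(x)$, and $h(x) := \limsup_n v_n(x)$. The plan is to show that $g$ and $h$ are $W$-harmonic almost everywhere, and then invoke a Liouville-type principle on the connected graphon $W$ to conclude they are a.e.\ constant. The starting point is the identity obtained by iterating the update rule,
\[
\deg_W(x)\,u_n(x) - \int_0^1 u_n(y)\,W(x,y)\dy \;=\; \sigma(x) - U^n\sigma(x),
\]
which after division by $n$ reads $\deg_W(x)\,v_n(x) - (Wv_n)(x) = \mathrm{err}_n(x)$. The hypothesis $\sigma \le K\deg_W$ forces $\|\sigma\|_1 \le K\int\deg_W \le K$, and Claim~\ref{c:L1_norm_stays_the_same_after_firing} gives $\|U^n\sigma\|_1 = \|\sigma\|_1$, so $\mathrm{err}_n \to 0$ in $L^1([0,1])$. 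Moreover, the pointwise inequality $f(U^k\sigma)\deg_W \le U^k\sigma$, integrated and summed over $k = 0, \dots, n-1$, yields the uniform estimate $\int v_n\deg_W\dx \le K$, so by Fatou $g \in L^1(\deg_W\dx)$ and is finite a.e.

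The crucial step is to derive the harmonic equalities
\[
\deg_W(x)\,g(x) \;=\; \int_0^1 W(x,y)\,g(y)\dy \qquad\text{and}\qquad \deg_W(x)\,h(x) \;=\; \int_0^1 W(x,y)\,h(y)\dy \qquad \text{a.e.}
\]
I would first pass to a subsequence $(n_k)$ on which $\mathrm{err}_{n_k}(x) \to 0$ for a.e.\ $x$ (granted by the $L^1$-convergence), then extract a further, weakly convergent sub-subsequence of $(v_{n_k})$ using the $L^1(\deg_W)$-bound together with a diagonal argument, and take limits in $\deg_W v_{n_k} - Wv_{n_k} = \mathrm{err}_{n_k}$. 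For $g$, Fatou applied to $Wv_n$ gives one direction, and chasing the inequality against the $L^1$-vanishing of the error closes it. For $h$, reverse Fatou is required and must be justified by uniform integrability of $(v_n\deg_W)$; this is the real technical heart of the argument, to be extracted from the recursion and chip conservation (or, alternatively, from an a.e.\ pointwise bound on $v_n$ derived from $\sigma/\deg_W < K$).

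Once harmonicity is in hand, the Liouville step is clean: if $v \in L^1(\deg_W\dx)$ satisfies $\deg_W v = Wv$ a.e.\ on a connected graphon, then $v$ is a.e.\ constant. Applying Jensen's inequality with a strictly convex $\phi$ (e.g.\ $\phi(t) = t^2$) to the probability measure $W(x,\cdot)/\deg_W(x)$, we get $\phi(v(x))\deg_W(x) \le \int \phi(v(y))W(x,y)\dy$ a.e.; integrating against $\dx$ yields $\int \phi(v)\deg_W\dx \le \int \phi(v)\deg_W\dx$, so Jensen's inequality is an equality for a.e.\ $x$. Strict convexity then forces $v(y) = v(x)$ for a.e.\ $y$ with $W(x,y) > 0$, and the connectedness of $W$ propagates this common value across $[0,1]$. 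Applied to $g$ and $h$, this gives the desired constants $\underline u$ and $\overline u$.

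The main obstacle is the harmonicity step, and specifically taking the $\limsup$ through $\int W(x,y)v_n(y)\dy$ to obtain $\deg_W h = Wh$: mere $L^1(\deg_W)$-boundedness is not enough, so one must either show an a.e.\ boundedness of $v_n$ (to be teased out of the update rule under the hypothesis $\sigma/\deg_W < K$) or establish Dunford--Pettis-type uniform integrability of $(v_n\deg_W)$. Once this is handled, the Jensen/maximum-principle step combined with connectedness of $W$ is a clean finish.
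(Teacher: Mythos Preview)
The statement you are addressing is a \emph{conjecture} in the paper, not a theorem; the authors explicitly leave it open and only prove the weaker Theorem~\ref{t:liminf, limsup}, which carries the extra hypothesis $\sigma(x)/\deg_W(x) < K$ a.e. There is therefore no ``paper's own proof'' to compare against. Your write-up repeatedly invokes precisely this boundedness hypothesis (``The hypothesis $\sigma \le K\deg_W$ forces\dots'', ``an a.e.\ pointwise bound on $v_n$ derived from $\sigma/\deg_W < K$''), so what you have sketched is at best an alternative route to the \emph{theorem}, not a proof of the conjecture.

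Viewed as an approach to Theorem~\ref{t:liminf, limsup}, your idea is genuinely different from the paper's and can be made to work. The paper proceeds via the Markov chain with transition densities $p(x,y)=W(x,y)/\deg_W(x)$: it proves irreducibility, identifies the stationary distribution, applies a general-state-space ergodic theorem to get $\|p^k(x,\cdot)-d_\pi\|_1\to 0$ a.e., and then sandwiches $u_n(x)-u_n(x')$ using these mixing estimates together with the bound $u_n\le (K-1)n$. Your harmonic/Liouville approach is more self-contained. Under the boundedness hypothesis one has $v_n\le K-1$ a.e.\ and $\mathrm{err}_n(x)=(\sigma(x)-U^n\sigma(x))/n\to 0$ pointwise, so for each fixed $x$ one may choose a subsequence with $v_{n_k}(x)\to g(x)$ and pass to the limit to get $\deg_W(x)g(x)\ge \int W(x,y)g(y)\dy$ by Fatou; integrating this inequality in $x$ and using symmetry of $W$ forces equality a.e., so $g$ is in fact harmonic (and similarly $h$, via reverse Fatou, which is legitimate once $v_n$ is bounded). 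Your Jensen/connectedness argument then finishes cleanly. This is arguably more elementary than the Markov-chain machinery, at the cost of being less informative about rates.

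For the conjecture itself, however, there is a real gap that you acknowledge but do not close: without $\sigma/\deg_W<K$ you lose the uniform bound on $v_n$, and hence both the pointwise convergence $\mathrm{err}_n(x)\to 0$ (you only have $L^1$-convergence, which gives a.e.\ convergence along \emph{some} subsequence, not along the $x$-dependent subsequence realizing $g(x)$ or $h(x)$) and the reverse Fatou step needed for $h$. Your suggested fixes---weak compactness of $(v_{n_k})$, or Dunford--Pettis uniform integrability of $v_n\deg_W$---do not obviously follow from chip conservation alone, and weak limits do not recover pointwise $\liminf/\limsup$. As written, the proposal does not prove the conjecture.
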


The idea of the proof is the following. The function $y \mapsto \frac{W(x, y)}{\deg_W(x)}$ can be interpreted as a density describing the neighborhood of $x$. Even though these densities can be quite different for different points, if we start a Markov chain at each point, the transition probabilities of this Markov chain will be close to each other after a sufficiently large number of steps. We can approximate the amount of chips received by the points using these probabilities, showing that the $\liminf$ and $\limsup$ of $\frac{u_n(x)}{n}$ do not depend on $x$. 

We start with collecting the notions regarding Markov chains that we will need. Let $(X, \mathcal{A})$ be a measurable space and $P : X \times \mathcal{A} \to [0, 1]$ denote the transition probabilities of a Markov chain, that is, $P(x, \cdot)$ is a probability distribution for each $x \in X$ and $P(\cdot, A)$ is measurable for each $A \in \mathcal{A}$. The higher-order transition probabilities are defined by $P^1(x, A) = P(x, A)$ and 
$$
 P^{n + 1}(x, A) = \int_{X} P^n(x, \dy) P(y, A).
$$

The Markov chain defined by $P$ is said to be \emph{irreducible} if there exists a non-zero $\sigma$-finite measure $\phi$ on $X$ such that for every $A \in \mathcal{A}$ with $\phi(A) > 0$ and every $x \in X$ there exists $n \in \N$ with $P^n(x, A) > 0$.

The probability distribution $\pi$ is called \emph{stationary distribution} if
$$
 \pi(A) = \int_{X} P(x, A) d\pi(x)
$$
for every $A \in \mathcal{A}$.
A Markov chain with a stationary distribution $\pi$ is \emph{aperiodic} if there do not exist disjoint, measurable subsets $A_0, \dots, A_{d - 1} \subseteq X$ with $d \ge 2$ such that for all $0\leq i < d$ and all $x \in A_i$, $P(x, A_{(i + 1) \pmod{d}}) = 1$, and $\pi(A_0) > 0$.

We will use the following result, appearing in this form in \cite[Theorem 4]{Markov chain}, in which the norm of a signed measure is defined as the \emph{total variation norm}, that is, 
$\|\nu\| = \sup_{A \in \mathcal{A}} |\nu(A)|$. 

\begin{thm}
 \label{t:Markov chain}
 If a Markov chain on a state space with countably generated $\sigma$-algebra is irreducible and aperiodic, and has a stationary distribution $\pi$, then for $\pi$-a.e.~$x \in X$,
$$
 \lim_{n \to \infty} \|P^n(x,\cdot) - \pi(\cdot)\| = 0.
$$
\end{thm}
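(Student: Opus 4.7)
The plan is to prove the convergence theorem by the classical coupling method, implemented via Nummelin splitting of the kernel. Fix $x \in X$ and run two copies of the chain in parallel: $(X_n)_{n \geq 0}$ started at $X_0 = x$ and $(Y_n)_{n \geq 0}$ started at $Y_0 \sim \pi$. If one can construct a finite coupling time $T$ after which $X_n = Y_n$ almost surely, then the coupling inequality gives
$$\|P^n(x,\cdot) - \pi(\cdot)\| \leq \mathbb{P}(T > n),$$
and the theorem reduces to showing $\mathbb{P}(T < \infty) = 1$ for $\pi$-a.e.\ starting point $x$.

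The first technical step, where the countably generated $\sigma$-algebra hypothesis is essential, is the construction of a small set. By a minorization argument of Jain--Jamison/Orey type, $\phi$-irreducibility together with countable generation of $\mathcal{A}$ produces an integer $m \geq 1$, a measurable set $C$ with $\phi(C) > 0$, a constant $\beta \in (0,1)$, and a probability measure $\nu$ on $X$ such that
$$P^m(x, B) \geq \beta \, \nu(B) \qquad \text{for all } x \in C \text{ and all } B \in \mathcal{A}.$$
This minorization licenses Nummelin splitting: enlarge the state space to $\widehat{X} = X \times \{0,1\}$ and lift $P$ so that $\alpha := C \times \{1\}$ becomes a genuine atom at which the residual transition law equals $\nu$. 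Two independent copies of the split chain that are simultaneously in $\alpha$ transition to the \emph{same} random point drawn from $\nu$, and therefore agree forever after.

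The second step is to see that the product chain $(X_n, Y_n)$ visits $\alpha \times \alpha$ infinitely often almost surely for $\pi$-a.e.\ starting $x$. Irreducibility together with existence of $\pi$ forces positive Harris recurrence on a $\pi$-full set, so each coordinate individually returns to $C$ (and hence, with residual probability $\beta$ per visit, to $\alpha$) infinitely often. Aperiodicity enters here crucially: without it, two independent copies could drift into complementary cyclic classes $A_0, \dots, A_{d-1}$ (of the type excluded by the paper's aperiodicity definition) and never occupy $\alpha$ at the same time. Aperiodicity rules this out and makes the product chain itself $\phi \times \phi$-irreducible on a full set. Each simultaneous visit to $\alpha \times \alpha$ gives an independent chance $\beta$ of triggering the coupling, so by Borel--Cantelli the coupling time $T$ is almost surely finite, yielding $\|P^n(x,\cdot) - \pi(\cdot)\| \to 0$ for $\pi$-a.e.\ $x$.

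The main obstacle is the small-set construction and the accompanying measure-theoretic bookkeeping. Producing $(m, C, \beta, \nu)$ measurably in a general (rather than finite or discrete) state space is the heart of the argument and is precisely where the countably generated $\sigma$-algebra is used; likewise, proving that aperiodicity (in the form used in the paper) rules out the cyclic obstruction requires a careful decomposition of the state space into period classes and an argument that a nontrivial decomposition would violate the hypothesis. Once those pieces are in place, the coupling inequality and the Borel--Cantelli step are routine.
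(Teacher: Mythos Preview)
The paper does not prove this theorem at all: it is quoted verbatim as a known result from Roberts and Rosenthal, \emph{General state space Markov chains and MCMC algorithms}, Probab.\ Surv.\ 1 (2004), Theorem~4, and is used as a black box in the proof of Theorem~\ref{t:liminf, limsup}. So there is nothing to compare your argument against in the paper itself.

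That said, your sketch is the standard one and is essentially the argument in the cited reference: minorization on a small set (whose existence uses the countably generated hypothesis), Nummelin splitting to manufacture an atom, and a coupling of two copies of the split chain that succeeds by Borel--Cantelli once aperiodicity rules out the cyclic obstruction to simultaneous visits. As a plan it is correct; the parts you flag as the ``main obstacle'' (measurable construction of the small set, and the period-class decomposition showing that aperiodicity in the stated form makes the product chain irreducible) are exactly the nontrivial ingredients, and your proposal defers rather than supplies them. If you intend to include a self-contained proof, those two pieces need to be written out; otherwise, citing Roberts--Rosenthal as the paper does is the appropriate move.
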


Now we are ready to prove the main result in this subsection.

\begin{proof}[Proof of Theorem \ref{t:liminf, limsup}]
 We first claim that we can suppose that $\deg_W(x) > 0$ for every $x \in [0, 1]$. In order to show this, let $A = \{ x \in [0, 1] : \deg_W(x) = 0\}$, and define $W'$ by $W'(x, y) = 1$ if $x \in A$ or $y \in A$, and $W'(x, y) = W(x, y)$ otherwise. It is easy to check that $W'$ is a graphon, and $W(x, y) = W'(x, y)$ for almost all $(x, y) \in [0, 1]^2$, since the connectedness of $W$ implies $\lambda(A) = 0$. Hence, $W'$ is also connected and $\deg_{W'}(x) > 0$ for all $x$. One can also show by induction on $n$, that if $\deg_W(x) > 0$ then $u_n(W, \sigma)(x) = u_n(W', \sigma)(x)$, since in this case $W'(x, y) = W(x, y)$ for almost all $y \in [0, 1]$. Hence if we show the statement of the theorem for $W'$ then we are also done for $W$. Thus we can indeed suppose that $\deg_W(x) > 0$ for all $x \in [0, 1]$. We also assume that $W$ is Borel measurable; we can do so, since for any (Lebesgue measurable) graphon $W$ there is a Borel measurable one $W'$ such that $W = W'$ almost everywhere, hence, using that $\deg_{W'}(x) = \deg_W(x)$ almost everywhere, it follows that $u_n(W', \sigma)(x) = u_n(W, \sigma)(x)$ almost everywhere. 
  
 Define a Markov chain using $W$ by the transition probabilities 
 \begin{equation*}
   P(x, A) = \frac{\int_A W(x, y) \dy}{\deg_W(x)}.
 \end{equation*}
 Here, and everywhere else where we do not specify the measure, we integrate with respect to the Lebesgue measure.
  
 To check that $P : [0, 1] \times \mathcal{B}([0, 1]) \to [0, 1]$, where $\mathcal{B}([0, 1])$ is the Borel $\sigma$-algebra, indeed defines a Markov chain, one can use e.g. \cite[Exercise 17.36]{K} to show first that $x \mapsto \deg_W(x)$ is Borel, hence $(x, y) \mapsto \frac{W(x, y)}{\deg_W(x)}$ is also Borel, and then use the same exercise again to show that for each $A \in \mathcal{B}([0, 1])$, $x \mapsto P(x, A)$ is also Borel.
  
 Notice that each distribution $P(x, \cdot)$ is absolutely continuous with respect to the Lebesgue measure, since 
 \begin{equation*}
   p(x, y) = \frac{W(x, y)}{\deg_W(x)}
 \end{equation*}
 is a density function. One can also show by induction on $n$ that $P^n(x, \cdot)$ has a density function $p^n(x, \cdot)$, which can be defined inductively by $p^1(x, y) = p(x, y)$ and 
 $$
   p^{n + 1}(x, y) = \int_0^1 p^n(x, z)p(z, y) \dz = \int_0^1 p^n(x, z) \frac{W(z, y)}{\deg_W(z)} \dz.
 $$
 We will also use that for $k \le n$, 
 \begin{equation}
   \label{e:p^n = int p^k p^n-k}
   p^n(x, y) = \int_0^1 p^k(x, z) p^{n - k}(z , y) \dz.
 \end{equation}
  
 Let us define a probability distribution on $[0, 1]$ by $\pi(A) = \frac{\int_A \deg_W(x) \dx}{\int_0^1 \deg_W(x) \dx}$ with density function $d_\pi(x) = \frac{\deg_W(x)}{\int_0^1 \deg_W(y) \dy}$. 
 \begin{claim}
   \label{c:irreducible, stationary}
   The Markov chain determined by $P$ is irreducible, and has $\pi$ as a stationary distribution.
 \end{claim}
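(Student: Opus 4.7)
The plan is to verify the two assertions separately, both leaning on Fubini's theorem and on the standing reductions already made in the proof of Theorem~\ref{t:liminf, limsup} (namely that $W$ is Borel and $\deg_W(x)>0$ for every $x\in[0,1]$).

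The stationarity part is a direct calculation. For any measurable $A\subseteq[0,1]$,
\[
\int_0^1 P(x,A)\,d\pi(x)=\frac{1}{\int_0^1\deg_W(z)\,dz}\int_0^1\!\!\int_A W(x,y)\,dy\,dx,
\]
and after swapping the integrals and using $W(x,y)=W(y,x)$, the inner integral becomes $\deg_W(y)$, which reproduces $\pi(A)$. No subtlety arises beyond Fubini for a non-negative function.

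For irreducibility, I would take $\phi=\pi$ (which, since $\deg_W>0$ everywhere, is equivalent to Lebesgue measure). Fix $x\in[0,1]$ and, for each $n\geq 1$, let $B_n=\{y:p^n(x,y)>0\}$ (defined up to null sets), and put $B=\bigcup_{n\geq 1}B_n$. The first observation is that $\lambda(B)\geq\lambda(B_1)=\lambda(\{y:W(x,y)>0\})>0$, because $\deg_W(x)=\int W(x,y)\,dy>0$. The goal is to upgrade this to $\lambda(B)=1$; once we have that, any measurable $A$ with $\lambda(A)>0$ satisfies $\lambda(A\cap B_n)>0$ for some $n$, so $P^n(x,A)=\int_A p^n(x,y)\,dy>0$, which is precisely irreducibility.

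The step I expect to be the main obstacle is the ``invariance'' of $B$ under the kernel. Suppose for contradiction that $\lambda(B)<1$. Connectedness of $W$ gives $\int_B\int_{B^c}W(y,z)\,dz\,dy>0$, so by countable subadditivity there is some $n$ with $\int_{B_n}\int_{B^c}W(y,z)\,dz\,dy>0$; by Fubini the set $C=\{z\in B^c:\int_{B_n}W(y,z)\,dy>0\}$ has positive measure. The recursion \eqref{e:p^n = int p^k p^n-k} with $k=n$ then gives, for $z\in C$,
\[
p^{n+1}(x,z)\geq\int_{B_n}p^n(x,w)\,\frac{W(w,z)}{\deg_W(w)}\,dw>0,
\]
since $p^n(x,\cdot)>0$ on $B_n$ and the remaining factor is positive on a positive-measure subset of $B_n$. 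Thus $C\subseteq B_{n+1}\subseteq B$, contradicting $C\subseteq B^c$. The care needed here is purely bookkeeping of ``positive almost everywhere'' sets through Fubini; once that is done cleanly the contradiction falls out, and the claim follows.
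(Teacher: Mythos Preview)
Your proposal is correct and follows essentially the same route as the paper's proof: the stationarity check is the identical Fubini computation, and for irreducibility the paper also fixes $x$, sets $A_n=\{y:p^n(x,y)>0\}$, argues by contradiction that $\lambda(\bigcup_n A_n)=1$ via connectedness, and derives $P^{n+1}(x,\cdot)>0$ on a positive-measure subset of the complement. The only cosmetic differences are that the paper takes $\phi=\lambda$ directly (rather than $\pi$, though as you note they are equivalent here) and phrases the final contradiction as $P^{n+1}(x,B)>0$ for $B=[0,1]\setminus\bigcup_n A_n$ rather than exhibiting points of $B$ lying in $B_{n+1}$.
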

 \begin{proof}
   The following calculation, using Fubini's theorem for non-negative functions shows that $\pi$ is indeed a stationary distribution: 
   \begin{equation*}
   \begin{split}
     \int_0^1 P(x, A) d\pi(x) &= \int_0^1 P(x, A) d_\pi(x) \dx = 
     \int_0^1 \int_{A} p(x, y) \dy \cdot d_\pi(x) \dx \\
     &= \int_0^1 \int_{A} \frac{W(x, y)}{\deg_W(x)} \dy \cdot \frac{\deg_W(x)}{\int_0^1 \deg_W(z) \dz} \dx \\
     &= \int_0^1 \int_{A} W(x, y) \dy \dx \cdot \frac{1}{\int_0^1 \deg_W(z) \dz} \\
     &= \frac{\int_{A} \deg_W(y) \dy}{\int_0^1 \deg_W(z) \dz} = \pi(A).
   \end{split}
   \end{equation*}
    
 	To check irreducibility, we use $\phi = \lambda$, the Lebesgue measure. Let $x \in [0, 1]$ be arbitrary, and let $A_n = \{y \in [0, 1] : p^n(x, y) > 0 \}$. We claim that it is enough to show that $\lambda\left(\bigcup_n A_n\right) = 1$. Indeed, if this is the case and $A \in \mathcal{B}([0, 1])$ with $\lambda(A) > 0$, then $\lambda(A \cap A_n) > 0$ for some $n$, hence $P^n(x, A) = \int_A p^n(x, y) \dy \ge \int_{A\cap A_n} p^n(x, y) \dy > 0$, since $p^n(x, \cdot)$ is positive on $A \cap A_n$ and $\lambda(A \cap A_n) > 0$. 
    
   Now suppose towards a contradiction that $\lambda\left(\bigcup_n A_n\right) < 1$ and let $B = [0, 1] \setminus \bigcup_n A_n$. Then $\lambda(B) > 0$, and also $\lambda(B) < 1$, since  for example $1 = \int_0^1 p^n(x, y) \dy = \int_{A_n} p^n(x, y) \dy$, showing that each $A_n$ is of positive measure. Using the fact that $W$ is connected, $\int_{B^c} \int_{B} W(x, y) \dy \dx > 0$, thus there exists some $n$ such that $\int_{A_n} \int_B W(x, y) \dy \dx > 0$. It follows that for $A_n' = \{ x \in A_n : \int_B W(x, y) \dy > 0\}$, $\lambda(A_n') > 0$. Now we show that $P^{n + 1}(x, B) > 0$ to get a contradiction and complete the proof, again using Fubini's theorem. 
   \begin{equation*}
   \begin{split}
     P^{n + 1}(x, B) &= \int_B p^{n + 1}(x, y)\dy = \int_B \int_0^1 p^n(x, z) p(z, y) \dz \dy \\ 
     &\ge \int_B \int_{A_n'}p^n(x, z) p(z, y) \dz \dy = \int_{A_n'} p^n(x, z) \int_B \frac{W(z, y)}{\deg_W(z)} \dy\dz > 0, 
   \end{split}
   \end{equation*}
   since $p^n(x, z) > 0$ for all $z \in A_n' \subseteq A_n$, $\int_B \frac{W(z, y)}{\deg_W(z)} \dy \ge \int_B W(z, y) \dy > 0$ for each $z \in A_n'$, and $\lambda(A_n') > 0$. 
 \end{proof}
  
 To be able to use Theorem \ref{t:Markov chain}, we would need to prove that our Markov chain is aperiodic. Unfortunately this is not the case if $W$ is \emph{bipartite}, that is, there is a measurable partition $[0, 1] = A \cup B$ such that $\lambda(A)$, $\lambda(B) > 0$ and $W(x, x') = 0$ for almost all $(x, x') \in A^2$ and also $W(y, y') = 0$ for almost all $(y, y') \in B^2$. For our purposes, another formulation of bipartiteness will be useful. For a bipartite graphon $W$ we call the decomposition $[0, 1] \supseteq F = X \cup Y$ into disjoint subsets a \emph{canonical decomposition}, if $\lambda(X)$, $\lambda(Y) > 0$, $\lambda(F) = 1$, for all $x \in X$ and for almost all $x' \in X$, $W(x, x') = 0$, and also for all $y \in Y$ and for almost all $y' \in Y$, $W(y, y') = 0$. Note that every bipartite graphon has a canonical decomposition. 
 
 For such a decomposition we denote by $\pi_X$ the distribution on $X$ defined by $\pi_X(A) = \frac{\int_A \deg_W(x) \dx}{\int_X \deg_W(x) \dx}$, and similarly $\pi_Y$ is a distribution on $Y$ defined by $\pi_Y(B) = \frac{\int_B \deg_W(y) \dy}{\int_Y \deg_W(y) \dy}$. We denote the corresponding density functions by $d_{\pi_X}$ and $d_{\pi_Y}$.
 \begin{claim}
   \label{c:irreducible, stationary for bipart}
   If $W$ is bipartite with canonical decomposition $X \cup Y$, then $P^2(x, \cdot)$ for $x \in X$ are transition probabilities for an irreducible Markov chain on $X$ with stationary distribution $\pi_X$. The analogous statement holds for $Y$ as well. 
 \end{claim}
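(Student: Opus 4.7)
The plan is to verify the three required properties — that $P^2(x,\cdot)$ is a genuine Markov kernel on $X$, that $\pi_X$ is stationary, and that the chain is irreducible — each by reducing to the corresponding property of $P$ on $[0,1]$ already established in Claim~\ref{c:irreducible, stationary}. The most delicate point, and the one I expect to be the main obstacle, is the first step: we need $P^2(x,\cdot)$ to be a probability measure supported in $X$ for \emph{every} $x\in X$, not merely almost every. This is exactly why the canonical decomposition is formulated to quantify pointwise over $x\in X$.

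Concretely, the canonical decomposition gives for every $x\in X$ that $W(x,x')=0$ for a.e.\ $x'\in X$, hence $P(x,X)=0$ and $P(x,Y)=1$; symmetrically $P(y,X)=1$ for every $y\in Y$. Therefore
\[
P^2(x,X)=\int P(x,dy)\,P(y,X)=1
\]
for every $x\in X$, so $P^2(x,\cdot)$ restricts to a probability measure on $X$, and measurability of $x\mapsto P^2(x,A)$ is inherited from that of $P$.

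To check stationarity, I would plug $A=Y$ into the stationarity identity for $\pi$ under $P$, obtaining $\pi(Y)=\int P(x,Y)\,d\pi(x)=\pi(X)$, so $\pi(X)=\tfrac12$ and $\pi_X=2\,\pi|_X$. For a Borel $A\subseteq X$, stationarity of $\pi$ under $P^2$ combined with the fact that the chain alternates sides (so $P^2(y,A)=0$ for every $y\in Y$) yields $\pi(A)=\int_X P^2(x,A)\,d\pi(x)$; doubling this identity gives $\pi_X(A)=\int_X P^2(x,A)\,d\pi_X(x)$, as required.

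For irreducibility I would take $\phi=\lambda|_X$: given a Borel $A\subseteq X$ with $\lambda(A)>0$ and any $x\in X$, the irreducibility part of Claim~\ref{c:irreducible, stationary} produces some $n\in\N$ with $P^n(x,A)>0$, and bipartiteness forces $n$ to be even, so $(P^2)^{n/2}(x,A)=P^n(x,A)>0$, as needed. The analogous statement for $Y$ follows by the symmetric argument with the roles of $X$ and $Y$ swapped.
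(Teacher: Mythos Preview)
Your proof is correct and follows essentially the same approach as the paper: show the chain alternates between $X$ and $Y$ pointwise (using the canonical decomposition and absolute continuity of $P(x,\cdot)$), then derive the kernel, irreducibility, and stationarity properties on $X$ from the corresponding properties of $P$ on $[0,1]$ established in Claim~\ref{c:irreducible, stationary}. One small point worth making explicit: the step ``$P(x,X)=0$ hence $P(x,Y)=1$'' tacitly uses that $P(x,[0,1]\setminus(X\cup Y))=0$, which follows from absolute continuity of $P(x,\cdot)$ and $\lambda(X\cup Y)=1$; the paper invokes this explicitly. Your stationarity argument (showing $\pi(X)=\tfrac12$ and hence $\pi_X=2\pi|_X$, then restricting the $P^2$-stationarity of $\pi$) is a slightly different and arguably cleaner route than the paper's suggestion to redo the direct Fubini computation for $\pi_X$, but both are straightforward.
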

 \begin{proof}
   Using the fact that $P(x, \cdot)$ is absolutely continuous with respect to the Lebesgue measure, and the properties of the canonical decomposition, one can easily show by induction on $n$ that 
   \begin{equation}
     \label{e:bipartite}
     P^{2n}(x, X) = 1 \text{ and } P^{2n + 1}(x, Y) = 1 \text{ for every $x \in X$.}
   \end{equation}
   This implies that $P^{2}(x, \cdot)$ is a probability distribution on $X$ for each $x \in X$. The measurability of $P^{2}(\cdot, A)$ can be shown as before for $P(\cdot, A)$, showing that $P^2$ indeed defines a Markov chain. 
    
   Claim \ref{c:irreducible, stationary} and \eqref{e:bipartite} shows that $P^2$ is irreducible on $X$, and a similar computation as in the proof of Claim \ref{c:irreducible, stationary} shows that $\pi_X$ is a stationary distribution.  
 \end{proof}
 \begin{claim}
   \label{c:almost everywhere convergence}
   If $W$ is not bipartite then $\|p^n(x, \cdot) - d_\pi\|_1 \to 0$ for almost every $x \in [0, 1]$.
   If $W$ is bipartite with canonical decomposition $X \cup Y$, then $\|p^{2n}(x, \cdot) - d_{\pi_X}\|_1 \to 0$ for almost all $x \in X$ and $\|P^{2n}(y, \cdot) - d_{\pi_Y}\|_1 \to 0$ for almost all $y \in Y$. 
 \end{claim}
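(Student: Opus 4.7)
The plan is to deduce the claim from Theorem \ref{t:Markov chain} applied to $P$ itself in the non-bipartite case, and to $P^2$ restricted to $X$ (resp.\ $Y$) in the bipartite case. Irreducibility and existence of stationary distributions are already supplied by Claim \ref{c:irreducible, stationary} and Claim \ref{c:irreducible, stationary for bipart}, and the Borel $\sigma$-algebra on $[0,1]$ is countably generated. Thus only two things remain: verifying the aperiodicity hypothesis in each case, and converting the total-variation convergence of Theorem \ref{t:Markov chain} into $L^1$ convergence of the densities.

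The core of the argument is a single cycle-breaking observation: for a connected graphon with positive degrees almost everywhere, the Markov chain $P$ admits no periodic decomposition of length $d \ge 3$. Indeed, if disjoint $A_0, \dots, A_{d-1}$ satisfy $P(x, A_{(i+1)\bmod d}) = 1$ for every $x \in A_i$ with $\pi(A_0) > 0$, then the support of $W(x, \cdot)$ for almost every $x \in A_0$ lies in $A_1$, so $\{(x,y) \in A_0 \times A_1 : W(x,y) > 0\}$ has positive measure. By symmetry of $W$ the same set witnesses $W(y,x) > 0$, so on a positive-measure subset of $y \in A_1$ the support of $W(y, \cdot)$ meets $A_0$, contradicting the hypothesis that it lies in $A_2$ (which is disjoint from $A_0$ when $d \ge 3$). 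A period-$2$ decomposition, on the other hand, produces a canonical bipartite decomposition of $W$ whose parts have full measure by connectedness.

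In the non-bipartite case, this immediately gives aperiodicity of $P$, and Theorem \ref{t:Markov chain} applies. In the bipartite case, suppose $P^2$ on $X$ had a period-$d$ decomposition $B_0, \dots, B_{d-1}$ with $d \ge 2$. Setting $Y_i = \{y \in Y : P(y, B_{(i+1)\bmod d}) = 1\}$, bipartiteness together with $P^2(x, B_{(i+1)\bmod d}) = 1$ for $x \in B_i$ forces $P(x, Y_i) = 1$; the $Y_i$ are pairwise disjoint because the $B_j$ are. Interleaving, $A_{2i} = B_i$ and $A_{2i+1} = Y_i$ yields a period-$2d$ decomposition of $[0,1]$ under $P$ with $2d \ge 4$, contradicting the cycle-breaking observation. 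Hence $P^2$ on $X$ is aperiodic, and similarly on $Y$.

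Finally, the total-variation norm in Theorem \ref{t:Markov chain} satisfies $\sup_A |\mu(A) - \nu(A)| = \tfrac{1}{2}\|f - g\|_1$ for probability measures with densities $f, g$, so the convergences $\|P^n(x,\cdot) - \pi\| \to 0$, $\|P^{2n}(x,\cdot) - \pi_X\| \to 0$, and $\|P^{2n}(y,\cdot) - \pi_Y\| \to 0$ translate directly into the claimed $L^1$ convergences; since $d_\pi$, $d_{\pi_X}$, $d_{\pi_Y}$ are positive almost everywhere on their respective supports, ``for $\pi$-almost every $x$'' coincides with ``for Lebesgue-almost every $x$'' there. The main obstacle is the bipartite case, specifically verifying carefully that a $P^2$-periodic structure on $X$ lifts through the $Y_i$'s to a $P$-periodic structure on $[0,1]$ that the cycle-breaking observation can refute; symmetry of $W$ is used in an essential way.
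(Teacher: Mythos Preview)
Your proposal is correct and follows the same overall architecture as the paper: reduce to Theorem~\ref{t:Markov chain}, verify aperiodicity, and convert total-variation convergence to $L^1$ convergence of densities (the paper uses the inequality $\|d_\nu\|_1\le 2\|\nu\|$, you use the equality $\|\nu\|=\tfrac12\|d_\nu\|_1$; either suffices). Your symmetry-based cycle-breaking observation (no $P$-period of length $d\ge 3$) is essentially the content of the paper's Lemma~\ref{l:P^2(x, A) > 0 for a.e. x in A}, which says $P^2(x,A)>0$ for almost every $x\in A$ whenever $\lambda(A)>0$; that lemma's proof also rests on $p(x,y)=0\Leftrightarrow p(y,x)=0$.

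The one genuine divergence is in the bipartite case. You lift a hypothetical period-$d$ decomposition $B_0,\dots,B_{d-1}$ of $P^2$ on $X$ through auxiliary sets $Y_i=\{y\in Y:P(y,B_{(i+1)\bmod d})=1\}$ to a period-$2d$ decomposition of $P$ on $[0,1]$, then invoke your $d\ge 3$ ruling. The paper instead applies its lemma directly: taking $A_0=B_0\subseteq[0,1]$, one gets $P^2(x,A_0)>0$ for a.e.\ $x\in A_0$, which immediately contradicts $P^2(x,B_1)=1$ with $B_0\cap B_1=\emptyset$. The paper's route is shorter here because the $P^2$-lemma, once isolated, handles both the non-bipartite and bipartite cases uniformly with no lifting needed; your route avoids stating a separate lemma but pays for it with the interleaving construction (which is correct, including the disjointness of the $Y_i$ and the check that $\pi(B_0)>0$). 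Both arguments ultimately exploit the same symmetry of $W$.
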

 \begin{proof}
   It is enough to prove that if $W$ is not bipartite then $\|P^n(x, \cdot) - \pi\| \to 0$ for almost every $x \in [0, 1]$, and analogously for the bipartite case, since if $\nu$ is a signed measure with density function $d_\nu$, then $\|d_\nu\|_1 = \int |d_\nu(x)|\dx = \big|\int_{d_\nu > 0} d_\nu(x) \dx\big| + \big|\int_{d_\nu < 0} d_\nu(x) \dx\big| \le 2\cdot \|\nu\|$. 
    
   It is also clear that $\pi$ and $\lambda$ are mutually absolutely continuous, since $\pi$ has a density function which is everywhere positive. Hence, a statement holds $\pi$-a.e. if and only if it holds $\lambda$-a.e. In the following discussion, where we can choose between the two, we always use the Lebesgue measure, as in the statement of this claim.
    
   To prove the first assertion we want to apply Theorem \ref{t:Markov chain} for $P$. Using Claim \ref{c:irreducible, stationary}, it remains to show that $P$ is aperiodic. Suppose towards a contradiction that the Markov chain is not aperiodic, hence there exist measurable subsets $A_0, \dots, A_{d - 1} \subseteq [0, 1]$ with $d \ge 2$ such that for all $i < d$ and all $x \in A_i$, $P(x, A_{(i + 1) \pmod{d}}) = 1$, and $\lambda(A_0) > 0$. It follows that $\lambda(A_i) > 0$ for each $i$, since if we suppose this for some fixed $i \le d - 2$ then $1 = P(x, A_{i + 1}) = \int_{A_{i + 1}} p(x, y) \dy$ for all $x \in A_i$, showing $\lambda(A_{i + 1}) > 0$. Let $A = \bigcup_{i < d} A_i$. Then $P(x, A) = 1$ for each $x \in A$, hence $W(x, y) = 0$ for almost all $(x, y) \in A \times A^c$. Then $\lambda(A) = 1$ follows from the connectedness of $W$. 
    
   Now we use the following lemma to show that $d \le 2$. 
   \begin{lemma}
     \label{l:P^2(x, A) > 0 for a.e. x in A}
     For a connected graphon $W$ and a set $A \subseteq [0, 1]$ with $\lambda(A) > 0$, $P^2(x, A) > 0$ for almost all $x \in A$.
   \end{lemma}
   \begin{proof}
     Let $B = \{x \in A : P^2(x, A) = 0\}$. Then, of course $P^2(x, B) = 0$ for all $x \in B$. Suppose towards a contradiction that $\lambda(B) > 0$. Then 
     \begin{equation*}
     \begin{split}
       0 &= \int_B P^2(x, B) \dx = \int_B \int_B p^2(x, z) \dz \dx = \int_B \int_B \int_0^1 p(x, y) p(y, z) \dy \dz \dx \\
       &= \int_0^1 \int_B p(x, y)\dx \int_B p(y, z) \dz \dy. 
     \end{split}
     \end{equation*}
     Since $p(x, y) = 0$ if and only if $p(y, x) = 0$, we have $\int_B p(x, y) \dx = 0$ if and only if $\int_B p(y, z) \dz = 0$. Therefore $p(x, y) = 0$ for almost all $(x, y) \in B \times [0, 1]$, contradicting the fact that $W$ is connected.
   \end{proof}
    
   We can use the lemma for $A = A_0$ to get that $P^2(x, A_0) > 0$ for almost all $x \in A_0$, hence it is not possible to have $d \ge 3$ with $P^2(x, A_2) = 1$ for all $x \in A_0$ and $A_0 \cap A_2 = \emptyset$. Therefore $d \le 2$, and $d = 2$ implies that $W$ is bipartite. It follows that if $W$ is not bipartite then $P$ is aperiodic. The first assertion then follows from Theorem \ref{t:Markov chain}. 
    
   Now suppose that $W$ is bipartite with canonical decomposition $X \cup Y$. To use Theorem \ref{t:Markov chain} to finish the proof, after applying Claim \ref{c:irreducible, stationary for bipart}, it remains to show that $P^2$ is aperiodic on $X$. If this was not the case, there would exist disjoint sets of positive measure $A_0, A_1 \subseteq X$ such that $P^2(x, A_1) = 1$ for all $x \in A_0$. Using Lemma \ref{l:P^2(x, A) > 0 for a.e. x in A} we obtain that $P^2(x, A_0) > 0$ for almost all $x \in A_0$, which contradicts the existence of such sets. 
 \end{proof}
  
 We are now ready to finish the proof of Theorem \ref{t:liminf, limsup}. We calculate lower and upper estimates for $u_n(x_0)$ for an arbitrary $x_0 \in [0, 1]$, after proving the following claim. 
 
 \begin{claim}
   \label{c:u_n(x) <= (K - 1)n}
   If $W$ is a connected graphon and $\sigma$ is a chip configuration on $W$ such that $\frac{\sigma(x)}{\deg_W(x)} < K$ almost everywhere, then $u_n(x) \le (K - 1)n$ for almost all $x \in [0, 1]$.
 \end{claim}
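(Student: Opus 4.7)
The plan is to establish by induction on $n$ the stronger pointwise invariant
\[
U^n\sigma(x) < K\,\deg_W(x) \qquad \text{for almost every } x\in[0,1],
\]
so that the ratio bound in the hypothesis is propagated forward by the parallel update. The bound on $u_n$ then falls out by summing a trivial per-step firing bound. (I read $K$ as a positive integer, which is the natural interpretation in this context; otherwise one may replace $K$ by $\lceil K\rceil$ throughout.)

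The base case $n=0$ is exactly the hypothesis. For the inductive step, assume the invariant at level $n$. Then for a.e.\ $y$ with $\deg_W(y)>0$,
\[
f(W,U^n\sigma)(y)=\left\lfloor \frac{U^n\sigma(y)}{\deg_W(y)}\right\rfloor \le K-1,
\]
and $f=0$ where $\deg_W=0$ by convention. Plugging this into the update rule, I would split $U^{n+1}\sigma(x)$ into the residual at $x$ and the incoming flow: the residual $U^n\sigma(x)-\deg_W(x)\,f(W,U^n\sigma)(x)$ equals $\deg_W(x)$ times the fractional part of $U^n\sigma(x)/\deg_W(x)$, so is strictly less than $\deg_W(x)$; and the incoming flow is
\[
\int_0^1 f(W,U^n\sigma)(y)\,W(x,y)\dy \le (K-1)\int_0^1 W(x,y)\dy=(K-1)\deg_W(x).
\]
Adding the two pieces yields $U^{n+1}\sigma(x)<K\deg_W(x)$ for a.e.\ $x$, completing the induction.

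Once the invariant is in hand for every $n$, each firing step contributes at most $K-1$, so
\[
u_n(x)=\sum_{k=0}^{n-1} f(W,U^k\sigma)(x) \le (K-1)n
\]
for almost every $x$. The one bookkeeping issue to watch is the handling of null exceptional sets along the induction: each inductive step introduces a measure-zero exceptional set, but the countable union of these over all $n$ remains null, and points where $\deg_W(x)=0$ are absorbed by the convention that such points never fire. I do not anticipate a substantive obstacle beyond this; the whole argument is essentially the graphon analogue of the elementary observation that on a finite graph the firing vector is pointwise dominated by $\sigma/\deg$, and that the dominator is preserved step by step.
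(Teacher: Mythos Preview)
Your proposal is correct and follows essentially the same route as the paper: both prove by induction that $U^n\sigma(x) < K\deg_W(x)$ almost everywhere, then read off $f\le K-1$ at each step and sum. Your write-up is more explicit about the residual/incoming decomposition and the null-set bookkeeping, but there is no substantive difference in strategy.
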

 \begin{proof}
   We show by induction on $n$ that $\frac{U^n\sigma(x)}{\deg_W(x)} < K$. From this statement, the claim easily follows. 
   The statement for $n = 0$ is an assumption of the theorem. Suppose now that it holds for some $n \in \N$ towards showing it for $n + 1$. Clearly, $$U^{n + 1}\sigma(x) < \deg_W(x) + \int (K - 1)W(x, y) \dy < K \deg_W(x),$$ finishing the proof.
 \end{proof}
 
 Since $\int_0^1 W(x, y) u_{n - 1}(y) \dy$ is the amount of mass received by $x$ during the first $n - 1$ steps, $u_n(x) \ge \frac{\int_0^1 W(x, y) u_{n - 1}(y) \dy}{\deg_W(x)} - 1$. Now let $x_0 \in [0, 1]$ be arbitrary and $k \le n$, then 
 \begin{align*}
   u_n(x_0) &\ge \frac{\int_0^1 W(x_0, x_1) u_{n - 1}(x_1) d x_1}{\deg_W(x_0)} - 1 
   =  \int_0^1 p(x_0, x_1) u_{n - 1}(x_1) d x_1 - 1 \\
   &\ge \int_0^1 p(x_0, x_1) \left(\frac{\int_0^1 W(x_1, x_2) u_{n - 2}(x_2) d x_2}{\deg_W(x_1)}  - 1\right) d x_1 - 1 \\
   &= \int_0^1 \int_0^1 p(x_0, x_1) p(x_1, x_2) u_{n - 2}(x_2) d x_1 d x_2 -  \int_0^1 p(x_0, x_1) d x_1 - 1 \\
   &= \int_0^1 p^2(x_0, x_2) u_{n - 2}(x_2) d x_2 - 2 \\
  &\ge  \cdots \\
   &\ge  \int_0^1 p^k(x_0, x_k) u_{n - k}(x_k) d x_k - k. 
 \end{align*}
  
 For the upper estimate, we use that $u_n(x) \le \frac{\sigma(x) + \int_0^1 W(x, y) u_{n - 1}(y) \dy}{\deg_W(x)}$ for any $x$, hence
  
 \allowdisplaybreaks
 \begin{align*}
   u_n(x_0) &\le  
   \frac{\sigma(x_0) + \int_0^1 W(x_0, x_1) u_{n - 1}(x_1) \dx_1}{\deg_W(x_0)} \\
   &=  \frac{\sigma(x_0)}{\deg_W(x_0)} + \int_0^1 p(x_0, x_1) u_{n - 1}(x_1) d x_1 \\
   & \le \frac{\sigma(x_0)}{\deg_W(x_0)} + 
   \int_0^1 p(x_0, x_1) \frac{ \sigma(x_1) + \int_0^1 W(x_1, x_2) u_{n - 2}(x_2) \dx_2}{\deg_W(x_1)} d x_1 \\
   &= \frac{\sigma(x_0)}{\deg_W(x_0)} + \int_0^1 p(x_0, x_1)\frac{\sigma(x_1)}{\deg_W(x_1)}\dx_1 \\
   & \;\;\;\;\; + \int_0^1 p(x_0, x_1) \frac{\int_0^1W(x_1, x_2)u_{n - 2}(x_2) d x_2}{\deg_W(x_1)} d x_1 \\
   &= \frac{\sigma(x_0)}{\deg_W(x_0)} + \int_0^1 p(x_0, x_1)\frac{\sigma(x_1)}{\deg_W(x_1)}\dx_1 \\
      & \;\;\;\;\; + \int_0^1 p^2(x_0, x_2) u_{n - 2}(x_2) \dx_2 \\
   &\le \cdots \\
   &\le \frac{\sigma(x_0)}{\deg_W(x_0)} + \sum_{i = 1}^{k - 1} \left(\int_0^1 p^i(x_0, x_i)\frac{\sigma(x_i)}{\deg_W(x_i)}\dx_i\right) \\
   & \;\;\;\;\; + \int_0^1 p^k(x_0, x_k) u_{n - k}(x_k) \dx_k
 \end{align*}
  
 Now first suppose that $W$ is not bipartite and let $M \subseteq [0, 1]$ be the set of points $x$ such that $\|p^k(x, \cdot) - d_\pi\|_1 \to 0$. Using Claim \ref{c:almost everywhere convergence}, $\lambda(M) = 1$. We now show that the conclusion of the theorem holds for points in $M$, that is, for any $x, x' \in M$, $\liminf \frac{u_n(x)}{n} = \liminf \frac{u_n(x')}{n}$ and $\limsup \frac{u_n(x)}{n} = \limsup \frac{u_n(x')}{n}$. 
 
 Let $x, x' \in M$ be arbitrary, and for a fixed $\varepsilon > 0$ choose $k \in \N$ so that $\|p^k(x, \cdot) - d_{\pi}\|_1 \le \varepsilon$ and $\|p^k(x', \cdot) - d_{\pi}\|_1 \le \varepsilon$. 
 Then, using Claim \ref{c:u_n(x) <= (K - 1)n} and that $\frac{\sigma(x)}{\deg_W(x)} < K$ for almost all $x \in [0, 1]$, for $n \ge k$,
 \begin{align*}
   u_n(x) - u_n(x') &\le \frac{\sigma(x)}{\deg_W(x)} + \sum_{i = 1}^{k - 1} \left(\int_0^1 p^i(x, y)\frac{\sigma(y)}{\deg_W(y)}\dy\right) \\
   & \;\;\;\;\; + \int_0^1 \left(p^k(x, y) - p^k(x', y)\right) u_{n - k}(y) \dy + k \\
   &\le \frac{\sigma(x)}{\deg_W(x)} + \sum_{i = 1}^{k - 1} \left(\int_0^1 p^i(x, y)K \dy\right) \\
   & \;\;\;\;\; + \int_0^1 n(K - 1)\left|p^k(x, y) - p^k(x', y)\right| \dy + k \\
   &\le \frac{\sigma(x)}{\deg_W(x)} + kK + 2n(K - 1)\varepsilon + k. 
 \end{align*}
 One can similarly calculate a lower estimate for $u_n(x) - u_n(x')$, hence as $n$ tends to infinity, we get that $\limsup \big|\frac{u_n(x)}{n} - \frac{u_n(x')}{n}\big| \le 2(K - 1) \varepsilon$ for all $\varepsilon > 0$, hence $\lim\big|\frac{u_n(x)}{n} - \frac{u_n(x')}{n}\big| = 0$, thus $\liminf \frac{u_n(x)}{n} = \liminf \frac{u_n(x')}{n}$ and $\limsup \frac{u_n(x)}{n} = \limsup \frac{u_n(x')}{n}$ for all $x, x' \in M$. Thus the proof of the theorem is complete in case $W$ is not bipartite.
  
 Now suppose that $W$ is bipartite with canonical decomposition $X \cup Y$. Let $M$ be the union of the set of points $x \in X$ with $\|p^{2n}(x, \cdot) - d_{\pi_X}\|_1 \to 0$ and the set of points $y \in Y$ with $\|p^{2n}(y, \cdot) - d_{\pi_Y}\|_1 \to 0$. Using Claim \ref{c:almost everywhere convergence}, $\lambda(M) = 1$. A similar argument to the above one shows that if $x, x' \in X \cap M$ then $\lim\big|\frac{u_n(x)}{n} - \frac{u_n(x')}{n}\big| = 0$ and the same conclusion holds for points $y, y' \in Y \cap M$. It remains to show the same for a pair $(x, y)$ with $x \in X \cap M$ and $y \in Y \cap M$. 
 
 In the following, the density functions $d_{\pi_X}$ and $d_{\pi_Y}$ of $\pi_X$ and $\pi_Y$ are understood to be defined on $[0, 1]$, with $d_{\pi_X}$ vanishing outside $X$ and $d_{\pi_Y}$ vanishing outside $Y$. We claim that for a fixed $\varepsilon$ we can choose $k \in \N$ so that $\|p^k(x, \cdot) - d_{\pi_X}\|_1 \le \varepsilon$ and $\|p^{k + 1}(y, \cdot) - d_{\pi_X}\|_1 \le \varepsilon$. To show this, first note that for all $k \in \N$, 
 \begin{align*}
   \left\|p^{k + 1}(y, \cdot)- d_{\pi_X}\right\|_1 &= \int \left|\int p(y, u)p^k(u, z) \du - d_{\pi_X}(z)\right| \dz \\
   &= \int \left| \int p(y, u)\left(p^k(u, z) - d_{\pi_X}(z)\right) \du \right|\dz \\
   &= \int p(y, u) \left\|p^k(u, .) - d_{\pi_X}\right\|_1 \du.
 \end{align*}
 
 Let $k$ be large enough so that $\|p^k(x, \cdot) - d_{\pi_X}\|_1 \le \varepsilon$ and for that set $H = \{u \in X : \|p^k(u, \cdot) -d_{\pi_X}\|_1 \ge \frac{\varepsilon}{2}\}$, $\lambda(H) \le \frac{\varepsilon \deg_W(y)}{4}$. Then, using that $p(y, u) = 0$ for almost all $u \in Y$, and that $p(y, u) \le \frac{1}{\deg_W(y)}$ and $\|p^k(u, \cdot) - d_{\pi_X}\|_1 \le 2$ for all $u \in [0, 1]$,
  \begin{align*}
    \left\|p^{k + 1}(y, \cdot)- d_{\pi_X}\right\|_1 
    &= \int p(y, u) \left\|p^k(u, .) - d_{\pi_X}\right\|_1 \du \\
    &\le \int_H \frac{2}{\deg_W(y)}  \du + \int_{X \setminus H} p(y, u) \frac{\varepsilon}{2} \du \le \frac{\varepsilon}{2} + \frac{\varepsilon}{2} = \varepsilon,
  \end{align*}
  showing our claim. 
 
 Then using Claim \ref{c:u_n(x) <= (K - 1)n} again, for $n \ge k$,
  
 \begin{align*}
   u_n(x) - u_{n + 1}(y) &\le \frac{\sigma(x)}{\deg_W(x)} + \sum_{i = 1}^{k - 1} \left(\int_0^1 p^i(x, z)\frac{\sigma(z)}{\deg_W(z)}\dz\right) \\
   & \;\;\;\;\; + \int_0^1 \left(p^k(x, z) - p^{k + 1}(y, z)\right) u_{n - k}(z) \dz + (k + 1)\\
   &\le \frac{\sigma(x)}{\deg_W(x)} + kK + 2n(K - 1)\varepsilon + (k + 1), 
 \end{align*}
 with a similar calculation showing the opposite direction, proving together that $\lim\big|\frac{u_n(x)}{n} - \frac{u_{n + 1}(y)}{n}\big| = 0$.  
This implies that $\liminf \frac{u_n(z)}{n}$ and $\limsup\frac{u_n(z)}{n}$ is the same for almost all $z \in [0, 1]$ even if $W$ is bipartite. 
Therefore the proof of the theorem is complete.
\end{proof}

\fi

\subsection{Existence of the activity for graphons with the finite diameter condition}
\label{ss:existence of activity}

In this section we give a sufficient condition for the existence of the activity. 

\begin{thm}\label{t:activity_exists_with_FDC}
	If the finite diameter condition holds for a graphon $W$, then the activity exists for any chip configuration.  
\end{thm}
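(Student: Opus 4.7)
My plan is to combine three ingredients: the boundedness-after-one-step bound (Lemma \ref{l:boundedness after one firing}), the $\liminf/\limsup$ constancy result (Theorem \ref{t:liminf, limsup}), and an ``approximate harmonicity'' of the odometer that can be combined with finite diameter to collapse $\liminf$ and $\limsup$.

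First I would reduce to bounded chip configurations. By Theorem \ref{t:finite diam equiv conditions}, $\mindeg(W)=d>0$, so Lemma \ref{l:boundedness after one firing} gives $U\sigma(x)\le C:=1+\|\sigma\|_1/d$ a.e. Since $u_n(W,\sigma)(x)=f(W,\sigma)(x)+u_{n-1}(W,U\sigma)(x)$, the activity of $\sigma$ exists iff that of $U\sigma$ does, and they coincide. Thus I may assume $\sigma\le C$ a.e. Then $\sigma/\deg_W\le C/d$ is bounded, and since finite diameter implies connectedness, Theorem \ref{t:liminf, limsup} yields constants $\underline u\le\overline u$ with $\liminf_n u_n(x)/n=\underline u$ and $\limsup_n u_n(x)/n=\overline u$ for a.e.\ $x$. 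It remains to show $\underline u=\overline u$.

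The key identity is obtained by telescoping the update rule:
\[
U^n\sigma(x)=\sigma(x)-u_n(x)\deg_W(x)+\int_0^1 u_n(y)W(x,y)\dy.
\]
Combining this with $0\le U^n\sigma\le C$ and $0\le\sigma\le C$ gives the approximate harmonicity
\[
\left|u_n(x)\deg_W(x)-\int_0^1 u_n(y)W(x,y)\dy\right|\le 2C\quad\text{a.e.}
\]
I would use finite diameter to upgrade this to a uniform spatial bound: there is a constant $K=K(W,\sigma)$ such that $|u_n(x)-u_n(y)|\le K$ for a.e.\ $x,y$ and every $n$. The idea is to define $A_n=\{x:u_n(x)\le \text{ess\,inf}(u_n)+K/2\}$; the approximate harmonicity forces $A_n$ to ``absorb'' the $\Gamma_\varepsilon$-neighborhoods of its low points (otherwise a low point would be pulled above $\text{ess\,inf}(u_n)+K/2$ by its neighbors). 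Iterating $N$ times and invoking the finite diameter condition then makes $A_n$ either essentially empty or essentially everything, forcing the gap between essential sup and inf of $u_n$ to stay bounded.

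The hard part, which I view as the main obstacle, is converting this spatial uniformity into temporal convergence: uniform $|u_n(x)-u_n(y)|\le K$ only tells us that $u_n/n$ is essentially constant in $x$ up to $K/n$, not that this constant converges as $n\to\infty$. To close this gap I would use the full approximate harmonicity together with the semigroup structure $u_{n+m}(W,\sigma)=u_n(W,\sigma)+u_m(W,U^n\sigma)$. The plan is to fix $m$ large, apply the uniform spreading lemma to $U^n\sigma$ (whose parameters are controlled by $C$, uniformly in $n$), and extract from the spatial average $\bar u_n:=\int u_n(x)\deg_W(x)\dx/\int\deg_W$ a Cauchy estimate $|\bar u_{n+m}/(n+m)-\bar u_n/n|=o(1)$ as $m,n\to\infty$. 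Since $u_n/n$ differs from $\bar u_n/n$ by $O(1/n)$, Cauchy-ness of the scalar sequence $\bar u_n/n$ would yield a common limit, forcing $\underline u=\overline u$.
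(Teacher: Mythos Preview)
Your reduction to a bounded configuration and your spatial uniformity step are both correct and match the paper: indeed, the paper's Proposition \ref{p:M_n-m_n bounded} establishes exactly that $M_n-m_n\le K$ uniformly in $n$ (where $m_n=\operatorname{ess\,inf}u_n$, $M_n=\operatorname{ess\,sup}u_n$), via essentially the ``finite diameter propagates the low/high set'' argument you sketch. So the only issue is the temporal step, which you yourself flag as the main obstacle.

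The gap is that your proposed route through the semigroup identity $u_{n+m}(W,\sigma)=u_n(W,\sigma)+u_m(W,U^n\sigma)$ does not yield a Cauchy estimate for $\bar u_n/n$. The second summand depends on $n$ through the unknown configuration $U^n\sigma$; while its spatial spread is again $\le K$, its \emph{level} (its essential infimum, or its average) is not controlled by anything you have written down. Thus from this identity alone you cannot compare $\bar u_{n+m}$ with $\bar u_n+\bar u_m$, and no Cauchy-type inequality emerges. Something more is needed to relate $u_m(W,U^n\sigma)$ back to $u_m(W,\sigma)$.

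The paper supplies exactly this missing comparison: Lemma \ref{l:m_n superadd} proves, via a monotonicity/abelian argument, that $u_{n+k}(W,\sigma)(x)\ge m_n+u_k(W,\sigma)(x)$ a.e., whence $m_{n+k}\ge m_n+m_k$; dually $M_{n+k}\le M_n+M_k$. Fekete's lemma then gives that $\lim m_n/n$ and $\lim M_n/n$ both exist, and since $M_n-m_n\le K$ they coincide; the squeeze $m_n\le u_n(x)\le M_n$ finishes. Note that the paper neither needs nor uses Theorem \ref{t:liminf, limsup} here: once you know the scalar sequences $m_n/n$ and $M_n/n$ converge to a common limit, the pointwise convergence of $u_n(x)/n$ is immediate, so your detour through the Markov-chain machinery is unnecessary.
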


We believe that having finite diameter is not necessary for the existence of the activity of every chip configuration.

\begin{prob}
Give a necessary and sufficient condition for a graphon $W$ such that the activity $a(W,\sigma)$ exists for each $\sigma$.
\end{prob}

We start the proof of Theorem \ref{t:activity_exists_with_FDC} by investigating the properties of the following two quantities: for a graphon $W$, a chip configuration $\sigma$, and each $n \in \N$ let
\begin{align*}
  m_n &= m_n(W,\sigma) =\inf\{k: \lambda(\{x : u_n(x) = k\})>0\}, \\
  M_n &= M_n(W,\sigma) =\sup\{k: \lambda(\{x : u_n(x) = k\})>0\}. \\
\end{align*}
It is easy to see that $m_n(W, \sigma)$ is finite, however, $M_n(W, \sigma)$ could be infinite. 

\begin{lemma}\label{l:m_n superadd}
	$m_n$ is superadditive, that is, $m_{n+k}\geq m_n + m_k$.
\end{lemma}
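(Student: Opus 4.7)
The approach is to decompose the $n+k$-step dynamics into an $n$-step piece and a $k$-step piece, and then reduce the claim to a monotonicity-along-the-orbit statement. Starting from the defining sum
\[
u_{n+k}(W,\sigma)(x)=\sum_{i=0}^{n+k-1} f(W,U^i\sigma)(x),
\]
I would split it at index $n$, noting that $U^i(U^n\sigma)=U^{n+i}\sigma$, to get the pointwise (a.e.) identity
\[
u_{n+k}(W,\sigma)(x)=u_n(W,\sigma)(x)+u_k(W,U^n\sigma)(x).
\]
Since $u_n(W,\sigma)(x)\ge m_n$ almost everywhere, this yields $u_{n+k}(x)\ge m_n+u_k(W,U^n\sigma)(x)$ a.e., hence $m_{n+k}\ge m_n+m_k(W,U^n\sigma)$ after taking the essential infimum. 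The superadditivity claim is therefore reduced to the comparison $m_k(W,U^n\sigma)\ge m_k(W,\sigma)$.

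The plan for this reduced statement is to prove the stronger \emph{one-step} inequality $m_k(W,U\tau)\ge m_k(W,\tau)$ for every $\tau\in\chip(W)$, from which the desired inequality follows by iterating $n$ times. For the base case $k=1$, a direct calculation suffices: if $f(W,\tau)(x)\ge a$ a.e.\ (equivalently, $\tau(x)\ge a\deg_W(x)$ a.e.), then the residual $\tau(x)-f(W,\tau)(x)\deg_W(x)\in[0,\deg_W(x))$ is nonnegative, while the incoming chips at $x$ satisfy $\int f(W,\tau)(y)W(x,y)\dy\ge a\deg_W(x)$; adding these gives $U\tau(x)\ge a\deg_W(x)$ a.e., so $f(W,U\tau)(x)\ge a$ a.e.

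For general $k$, the principal obstacle is that the pointwise inequality $u_k(W,U\tau)(x)\ge u_k(W,\tau)(x)$ genuinely fails (a vertex that was chip-rich in $\tau$ can become chip-poor in $U\tau$), so Lemma \ref{l:more_chips_fire_more} cannot be invoked directly. My plan is to overcome this by exploiting the auxiliary configuration
\[
\sigma^{*}(x):=U^n\sigma(x)+u_n(x)\deg_W(x)=\sigma(x)+\int u_n(y)W(x,y)\dy,
\]
which is $\ge\sigma$ a.e.\ and therefore satisfies $u_k(W,\sigma^{*})(x)\ge m_k$ a.e.\ by Lemma \ref{l:more_chips_fire_more}. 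Because $u_n$ is integer-valued, one verifies that $f(W,\sigma^{*})=f(W,U^n\sigma)+u_n$, which should let me track the firings of $\sigma^{*}$ in terms of those of $U^n\sigma$ and conclude $u_k(W,U^n\sigma)(x)\ge m_k$ a.e. The delicate bookkeeping in this comparison — in particular ensuring that the ``extra'' chips $u_n\deg_W$ propagate correctly through subsequent firings so that $u_k(W,\sigma^{*})-u_k(W,U^n\sigma)$ can be bounded in the right direction — is where I expect the main technical work to lie.
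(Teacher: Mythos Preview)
Your decomposition $u_{n+k}(W,\sigma)=u_n(W,\sigma)+u_k(W,U^n\sigma)$ and the reduction to showing $m_k(W,U^n\sigma)\ge m_k(W,\sigma)$ are correct. The problem is the plan for that reduced claim. From $\sigma^{*}=U^n\sigma+u_n\deg_W\ge\sigma$ you correctly get $u_k(W,\sigma^{*})\ge u_k(W,\sigma)\ge m_k$ via Lemma~\ref{l:more_chips_fire_more}. But to pass from $\sigma^{*}$ down to $U^n\sigma$ you would need an inequality of the form $u_k(W,U^n\sigma)\ge u_k(W,\sigma^{*})-(\text{something bounded by }u_n)$, and the tools at hand point the wrong way: since $\sigma^{*}\ge U^n\sigma$, Lemma~\ref{l:more_chips_fire_more} only gives $u_k(W,\sigma^{*})\ge u_k(W,U^n\sigma)$. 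The identity $f(W,\sigma^{*})=f(W,U^n\sigma)+u_n$ holds at the first step, but after one update $U\sigma^{*}(x)=U^{n+1}\sigma(x)+\int u_n(y)W(x,y)\,dy$, and the offset is no longer an integer multiple of $\deg_W(x)$, so the clean firing correspondence breaks down. The ``delicate bookkeeping'' you flag is not merely technical: without a new idea it does not close.

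The paper sidesteps this detour by proving directly, by induction on $k$, the pointwise inequality $u_{n+k}(W,\sigma)(x)\ge m_n+u_k(W,\sigma)(x)$ a.e.\ (both odometers taken with respect to the \emph{same} $\sigma$). The inductive step writes $u_{n+k}(x)=m_n+u_k(x)+a$ with $a\ge0$, uses the inductive hypothesis inside the integral in the formula $U^{n+k}\sigma(x)=\sigma(x)-u_{n+k}(x)\deg_W(x)+\int u_{n+k}(y)W(x,y)\,dy$ to obtain $U^{n+k}\sigma(x)\ge U^k\sigma(x)-a\deg_W(x)$, and then observes that the extra $a$ cancels when one adds $\lfloor U^{n+k}\sigma(x)/\deg_W(x)\rfloor$ to $u_{n+k}(x)$ --- exactly the same ``excess compensates in the floor'' trick as in Lemma~\ref{l:more_chips_fire_more}. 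Incidentally, the paper's inequality with the roles of $n$ and $k$ swapped reads $u_{n+k}(x)\ge m_k+u_n(x)$, i.e.\ $u_k(W,U^n\sigma)(x)\ge m_k$ a.e., which is precisely your target; so your reduced claim is true, but the clean route to it is this direct induction rather than the $\sigma^{*}$ comparison.
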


\begin{proof}
	We claim that it is enough to prove that 
	\begin{equation}\label{eq:ind_cond_subadd}
	\text{for almost all }x\in[0,1],\ u_{n+k}(x)\geq m_n + u_k(x).
	\end{equation}
    Indeed, suppose that \eqref{eq:ind_cond_subadd} holds and let $A = \{ x \in [0, 1] : u_{n+k}(x)=m_{n+k}$. Note that $\lambda(A) > 0$ by the definition of $m_{n + k}$. For almost all $x \in A$, we have $m_{n+k} = u_{n + k}(x) \geq m_n + u_k(x)$. Since $\lambda(A)>0$, and $u_k(x) < m_k$ can only hold on a set of measure zero, we conclude that there exists $x \in A$ such that $u_k(x) \ge m_k$ and $m_{n+k} \geq m_n + u_k(x)$ both hold. Hence $m_{n+k} \geq m_n + m_k$. 

  To prove (\ref{eq:ind_cond_subadd}), we proceed by induction on $k$. For $k=0$, the statement is trivial. Suppose that the statement holds for $k$, i.e. $u_{n+k}(y)\geq m_n + u_k(y)$ for almost all $y\in[0,1]$. We prove it for $k+1$.
   
  Fix an arbitrary $x\in[0,1]$ with $\deg_W(x) > 0$, and suppose that $u_{n+k}(x)= m_n + u_k(x) +  a$ for some nonnegative integer $a$. If starting from $\sigma$ we fire each vertex $y$ exactly $m_n + u_k(y)$ times, we get to $U^k\sigma$ as firing each vertex $m_n$ times does not change the chip configuration. Now 
  
  \begin{align*}
  U^{n+k}\sigma(x)=\sigma(x)-u_{n+k}(x)\deg_W(x) + \int_0^1 u_{n+k}(y) W(x,y) \dy\geq \\\sigma(x)-(m_n + u_k(x) + a)\deg_W(x) + \int_0^1 (m_n + u_k(y)) W(x,y) \dy = \\
  U^k \sigma(x) -a\cdot\deg_W(x).
  \end{align*}
  
  Hence
  \begin{align*}
  u_{n+k+1}(x)=u_{n+k}(x) + \left\lfloor \frac{U^{n+k} \sigma(x)}{\deg_W(x)}\right\rfloor \geq \\ m_n + u_k(x) + a + \left\lfloor \frac{U^{k} \sigma(x)-a\cdot\deg_W(x)}{\deg_W(x)}\right\rfloor = m_n + u_{k+1}(x).
 \end{align*}
\end{proof}

\begin{lemma}\label{l:M_n subadd}
	$M_n$ is subadditive, that is, $M_{n+k} \leq M_n + M_k$.
\end{lemma}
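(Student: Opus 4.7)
The plan is to mirror the proof of Lemma \ref{l:m_n superadd}, reversing inequalities. Observe first that $u_n(x)$ is always a non-negative integer (since $f$ is a floor), so $M_n \in \N \cup \{\infty\}$; and the inequality $M_{n+k} \le M_n + M_k$ is trivial if either summand on the right is $\infty$. Hence we may assume $M_k < \infty$ throughout, and it suffices to prove the following analogue of \eqref{eq:ind_cond_subadd}:
\begin{equation*}
  u_{n+k}(x) \le M_k + u_n(x) \quad \text{for almost all } x \in [0,1].
\end{equation*}
Once this is in hand, for any integer $m$ with $\lambda(\{x : u_{n+k}(x) = m\}) > 0$, almost every $x$ in this set must satisfy $u_n(x) \ge m - M_k$, so this value is attained on a set of positive measure, giving $M_n \ge m - M_k$; taking sup over $m$ yields $M_{n+k} \le M_n + M_k$.

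I would prove the key inequality by induction on $n$. For $n = 0$ the set $\{x : u_k(x) > M_k\}$ equals the countable union $\bigcup_{m \in \N,\, m > M_k} \{x : u_k(x) = m\}$, each summand of which is null by definition of $M_k$ as a supremum; so $u_k(x) \le M_k$ almost everywhere. For the inductive step, fix $x$ with $\deg_W(x) > 0$ at which the inductive hypothesis holds for almost all $y$, and write $u_{n+k}(x) = u_n(x) + b$ for a non-negative integer $b \le M_k$. Proceeding exactly as in Lemma \ref{l:m_n superadd}, the update formula gives
\begin{align*}
U^{n+k}\sigma(x) &= \sigma(x) - u_{n+k}(x)\deg_W(x) + \int_0^1 u_{n+k}(y) W(x,y) \dy \\
&\le \sigma(x) - (u_n(x) + b)\deg_W(x) + \int_0^1 (M_k + u_n(y)) W(x,y) \dy \\
&= U^n\sigma(x) + (M_k - b)\deg_W(x).
\end{align*}

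Since $M_k - b$ is a non-negative integer, we may pull it through the floor:
\begin{equation*}
u_{n+k+1}(x) = u_{n+k}(x) + \left\lfloor \frac{U^{n+k}\sigma(x)}{\deg_W(x)} \right\rfloor \le u_n(x) + b + \left\lfloor \frac{U^n\sigma(x)}{\deg_W(x)} \right\rfloor + (M_k - b) = u_{n+1}(x) + M_k,
\end{equation*}
which completes the induction. The only delicate point is keeping the arithmetic inside the floor honest, exactly the same bookkeeping as in Lemma \ref{l:m_n superadd}: we need $M_k$ to be an integer (finite) so that $M_k - b$ is an integer and can be extracted from $\lfloor\cdot\rfloor$. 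Beyond that, the argument is essentially the mirror image of the superadditivity proof.
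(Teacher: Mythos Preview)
Your proof is correct and is exactly the ``analogous'' argument the paper alludes to: after disposing of the trivial case where $M_n$ or $M_k$ is infinite, you mirror the induction from Lemma~\ref{l:m_n superadd} with inequalities reversed, using that $M_k - b$ is a non-negative integer so it can be pulled out of the floor. The only minor wording slip is in the deduction step, where ``this value is attained'' should read ``some value at least $m - M_k$ is attained''; the conclusion $M_n \ge m - M_k$ is unaffected.
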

\begin{proof}
 If $M_n$ is infinite, then we are ready.
 If $M_n$ is finite, then the statement can be proved analogously to Lemma \ref{l:m_n superadd}.
\end{proof} 

We recall Fekete's lemma \cite{Fekete}, that states that for a superadditive sequence $a_n$, $\lim_{n \to\infty} \frac{a_n}{n}$ exists and equals to $\sup_n \frac{a_n}{n}$, and for a subadditive sequence $b_n$, $\lim_{n \to\infty} \frac{b_n}{n}$ exists and equals to $\inf_n \frac{b_n}{n}$. Hence we have the following.

\begin{prop}
    \label{p:lim m_n/n exists}
	The limit $\lim_{n \to\infty} \frac{m_n}{n}$ exists and is equal to $\sup_{n} \frac{m_n}{n}$. The limit $\lim_{n \to\infty} \frac{M_n}{n}$ exists and is equal to $\inf_{n} \frac{M_n}{n}$. In particular, if the activity of $(W, \sigma)$ exists then $$\frac{m_k}{k} \le \lim_{n \to \infty} \frac{m_n}{n} \le a(W, \sigma) \le \lim_{n \to \infty} \frac{M_n}{n} \le \frac{M_k}{k}$$ for every $k \ge 1$.
\end{prop}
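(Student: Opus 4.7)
The first two assertions follow directly from Fekete's lemma, just recalled in the paper, together with Lemma \ref{l:m_n superadd} and Lemma \ref{l:M_n subadd}: since $(m_n)$ is superadditive, $\lim_{n} m_n/n = \sup_n m_n/n$, and since $(M_n)$ is subadditive (in $[0,\infty]$, allowing for the possibility that some $M_n$ is infinite), $\lim_n M_n/n = \inf_n M_n/n$. So there is essentially nothing to do for these two statements.

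For the ``in particular'' clause, my plan is to first establish the pointwise (almost everywhere) sandwich $m_n \le u_n(x) \le M_n$. This is immediate from the definitions of $m_n$ and $M_n$: because $u_n$ is non-negative integer valued, the set $\{x : u_n(x) < m_n\}$ is the finite union $\bigcup_{k=0}^{m_n-1}\{x : u_n(x) = k\}$, each piece of which is null by the definition of $m_n$ as an infimum; the argument for the upper bound is analogous, using that $\{x : u_n(x) > M_n\}$ is a countable union of null level sets (and is trivially empty if $M_n=\infty$).

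Next, assuming that the activity $a=a(W,\sigma)$ exists, I would consider the set $E$ of points $x\in [0,1]$ for which $u_n(x)/n \to a$ and $m_n \le u_n(x) \le M_n$ holds simultaneously for every $n\in\N$. As a countable intersection of conull sets, $\lambda(E)=1$, so $E$ is nonempty; picking any $x\in E$, dividing the sandwich by $n$, and letting $n\to\infty$ yields $\lim_n m_n/n \le a \le \lim_n M_n/n$. Combining this with the sup/inf characterizations from the first part gives the full chain $m_k/k \le \lim_n m_n/n \le a \le \lim_n M_n/n \le M_k/k$ for every $k\ge 1$.

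There is no real obstacle here: the proposition is a bookkeeping consequence of Fekete's lemma and the two preceding lemmas. The only mild care is the extended-real interpretation of $M_n$, which Fekete's lemma accommodates without change.
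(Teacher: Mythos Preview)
Your proposal is correct and follows the same approach as the paper, which simply cites Fekete's lemma together with Lemmas~\ref{l:m_n superadd} and~\ref{l:M_n subadd} and then states the proposition without further argument. In fact you supply more detail than the paper does for the ``in particular'' clause, and your justification of the almost-everywhere sandwich $m_n \le u_n(x) \le M_n$ is exactly what is needed.
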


\begin{prop}\label{p:M_n-m_n bounded}
	If the graphon $W$ has finite diameter and $\sigma$ is a chip configuration on $W$ with $U^n\sigma(x) \le K$ for each $n \in \N$ and $x \in [0, 1]$, then there exists $k \in \mathbb{N}$ such that $M_n - m_n \leq k$ for each $n$.
\end{prop}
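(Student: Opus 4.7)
The plan is to use equivalent condition (ii) of Theorem \ref{t:finite diam equiv conditions} (which applies since $W$ has finite diameter), providing $N \in \mathbb{N}$ and $\varepsilon > 0$ depending only on $W$ such that $\lambda(A \cup \Gamma_\varepsilon(A) \cup \dots \cup \Gamma_\varepsilon^N(A)) = 1$ for every measurable $A$ with $\lambda(A) \ge 1/2$. Combined with the chip conservation identity
\[
U^n\sigma(x) = \sigma(x) - u_n(x)\deg_W(x) + \int_0^1 u_n(y) W(x,y)\dy,
\]
the bounds $0 \le U^n\sigma(x) \le K$, and $\sigma(x) \le K$, this will let me propagate a lower (respectively upper) bound on $u_n$ outward from a set of measure at least $1/2$, and then compare with $m_n$ (respectively $M_n$).

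Fix $n$. Since $u_n$ is integer-valued and satisfies $u_n \ge m_n$ almost everywhere, there is a largest integer $L_0$ with $\lambda(\{u_n \ge L_0\}) \ge 1/2$, and $L_0 \in [m_n, M_n]$. I split into two symmetric cases according to whether $L_0 \ge (m_n + M_n)/2$ or not. In the first case, set $B_0 = \{u_n \ge L_0\}$, so $\lambda(B_0) \ge 1/2$, and $\tilde B_k = B_0 \cup \Gamma_\varepsilon(B_0) \cup \dots \cup \Gamma_\varepsilon^k(B_0)$; then $\lambda(\tilde B_N) = 1$.

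The main step is to prove by induction on $k$ that $u_n \ge m_n + D_k$ almost everywhere on $\tilde B_k$, where $D_0 = L_0 - m_n$ and $D_{k+1} = \max(0,\ \varepsilon D_k - K)$. For $x \in \tilde B_{k+1}\setminus \tilde B_k \subseteq \Gamma_\varepsilon(\tilde B_k)$, the symmetry of $W$ gives $\int_{\tilde B_k} W(x,y)\dy \ge \varepsilon$, so the inductive hypothesis and the trivial bound $u_n \ge m_n$ yield
\[
\int_0^1 u_n(y) W(x,y)\dy \ge (m_n + D_k)\int_{\tilde B_k} W(x,y)\dy + m_n \int_{\tilde B_k^c} W(x,y)\dy \ge m_n \deg_W(x) + D_k \varepsilon.
\]
Combining this with the chip conservation identity, $U^n\sigma(x) \le K$, and $\deg_W(x) \le 1$ gives $u_n(x) \ge m_n + D_k\varepsilon - K$, hence $u_n(x) \ge m_n + D_{k+1}$.

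At $k = N$ the bound $u_n \ge m_n + D_N$ holds almost everywhere, but since $\{u_n = m_n\}$ has positive measure by definition of $m_n$, this forces $D_N \le 0$ and hence $D_N = 0$. The solution of the pure recursion $d_{k+1} = \varepsilon d_k - K$ is $d_k = \varepsilon^k D_0 - K(1-\varepsilon^k)/(1-\varepsilon)$, which is strictly decreasing in $k$; if $D_0$ exceeded $C := K(1-\varepsilon^N)/(\varepsilon^N(1-\varepsilon))$, then $d_N > 0$ and hence $d_k > 0$ for all $k \le N$, so $D_k = d_k$ throughout and $D_N > 0$, a contradiction. Therefore $D_0 \le C$, and since $L_0 \ge (m_n + M_n)/2$ gives $D_0 \ge (M_n - m_n)/2$, we conclude $M_n - m_n \le 2C$. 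The symmetric case uses $B_0 = \{u_n \le L_0\}$ together with the upper-bound form $u_n(x)\deg_W(x) \le \sigma(x) + \int_0^1 u_n(y) W(x,y)\dy \le K + \int_0^1 u_n(y) W(x,y)\dy$ to propagate an upper bound inward from $M_n$, producing the same estimate. Taking $k = \lceil 2C \rceil$ then works uniformly in $n$. The main subtlety I anticipate is bookkeeping the direction of the inequalities and using $\deg_W(x) \le 1$ at the right step so that the iterated constant $C$ depends only on $W$ and $K$.
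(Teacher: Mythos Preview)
Your proof is correct and follows essentially the same approach as the paper. Both arguments invoke condition (ii) of Theorem~\ref{t:finite diam equiv conditions} to obtain $N$ and $\varepsilon$, split into two cases depending on whether the ``upper half'' or ``lower half'' of the odometer values occupies measure at least $1/2$, and then propagate a bound on $u_n$ outward through $N$ applications of $\Gamma_\varepsilon$, picking up a factor of $\varepsilon$ and losing $K$ at each step; the resulting constant $2K(\varepsilon^{-1}+\cdots+\varepsilon^{-N})$ is identical. The only cosmetic differences are that you work directly with the conservation identity $U^n\sigma(x)=\sigma(x)-u_n(x)\deg_W(x)+\int u_n(y)W(x,y)\,dy$ and take the median level $L_0$, whereas the paper phrases Case~1 as ``fire everyone $m_n$ times, then count additional firings'' and Case~2 via ``inverse firings'' from $M_n$, and uses the fixed threshold $(M_n+m_n)/2$; these are the same computation in different language.
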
 
\begin{proof}
  Let us fix $\varepsilon > 0$ and $N \in \N$ according to the equivalent definition (ii) of Theorem \ref{t:finite diam equiv conditions}. 
  Let $A=\{x\in [0,1]: u_n(x)\geq \frac{M_n+ m_n}{2}\}$. 
  Then either the measure of $A$ or the measure of the complement of $A$ is at least $\frac{1}{2}$, so we are able to use the condition (ii) for one these.
  
  \noindent \textbf{Case 1:} $\lambda(A) \ge \frac{1}{2}$, hence $\lambda(A \cup \Gamma_\varepsilon(A) \cup \Gamma^2_\varepsilon(A)\cup \dots \cup \Gamma^N_\varepsilon(A))=1$ by our choice of $\varepsilon$ and $N$. 
  
  Let us fire each vertex $m_n$ times. Then only a measure-zero set of vertices are fired more times than they should be after the first $n$ steps. After firing each vertex $m_n$ times, the chip configuration is the same as originally. Then we additionally fire each vertex the necessary number of times to fire almost all vertex $x$ exactly $u_n(x)$ times (except for the elements of the measure-zero set $\{ x : u_n(x) < m_n\}$). In this way, we get to a chip configuration that is equal to $U^n\sigma$ almost everywhere, hence it is essentially bounded by $K$.
	
	The vertices in $A$ have to be fired at least $\frac{M_n-m_n}{2}$ times additionally. 
	By the definition of $\Gamma_\varepsilon(A)$, the vertices in $\Gamma_\varepsilon(A)$ receive at least $\varepsilon$ chips if $A$ is fired. Hence after the at least $\frac{M_n - m_n}{2}$ additional firings, the vertices in $\Gamma_\varepsilon(A)$ receive at least $\varepsilon\cdot \frac{M_n - m_n}{2}$ chips. Using the bound on the configuration, and that the degree of a vertex is at most $1$, each vertex of $\Gamma_\varepsilon(A)$ has to do at least $\left\lceil\varepsilon\cdot \frac{M_n - m_n}{2}-K\right\rceil \ge \varepsilon\cdot \frac{M_n - m_n}{2}-K$ additional firings. 
	
  Continuing like this, we get that any vertex in $A \cup \Gamma_\varepsilon(A) \cup \Gamma^2_\varepsilon(A)\cup \dots \cup \Gamma^N_\varepsilon(A)$ has to do at least  $\varepsilon^N \cdot \frac{M_n-m_n}{2} - K(\varepsilon^{N-1}+\dots + \varepsilon +1)$ additional firings.
	As the vertices that fire $m_n$ times do not need any additional firing and the measure of these vertices is positive by the definition of $m_n$, we have that 
  $$\varepsilon^N \cdot \frac{M_n-m_n}{2} - K(\varepsilon^{N-1}+\dots + \varepsilon +1)\leq 0,$$
  hence $M_n-m_n \leq (\frac{1}{\varepsilon})^N\cdot 2K(\varepsilon^{N-1}+\dots + \varepsilon +1)$, which does not depend on $n$.

  \medskip
  \noindent \textbf{Case 2:} $\lambda(A) < 1/2$. Let $B=[0,1]\setminus A$. Then $\lambda(B) > 1/2$, hence $\lambda(B \cup \Gamma_\varepsilon(B) \cup \Gamma^2_\varepsilon(B)\cup \dots \cup \Gamma^N_\varepsilon(B))=1$
	
	Starting again from $\sigma$, let us fire each vertex $M_n$ times. Then the chip configuration remains the same. Now we ``inverse fire'' each vertex the necessary number of times so that in the end, the number of firings made by a vertex $x$ is $u_n(x)$ for all $x$, except those that are in the measure zero set $\{x : u_n(x) > M_n\}$. Then we reached a chip configuration $\rho$, with $\rho(x) = U^n\sigma(x)$ almost everywhere. 
  
  By definition, the vertices of $B$ have to be inverse fired at least $\frac{M_n-m_n}{2}$ times.
	By inverse firing $B$, the vertices in $\Gamma_\varepsilon(B)$ all lose at least $\varepsilon$ chips, hence after at least $\frac{M_n-m_n}{2}$ inverse firings, they lose at least $\varepsilon\cdot \frac{M_n-m_n}{2}$ chips. As originally they had at most $K$ chips and at the end they have at least 0 chips, they have to gain at least $\varepsilon\cdot \frac{M_n-m_n}{2}-K$ chips. By an inverse firing they can gain at most one chip, hence they need to perform at least $\varepsilon\cdot \frac{M_n-m_n}{2}-K$ inverse firings. 
  
  Continuing this, we get that each vertex in $B \cup \Gamma_\varepsilon(B) \cup \Gamma^2_\varepsilon(B)\cup \dots \cup \Gamma^N_\varepsilon(B)$ needs at least $\varepsilon^N \cdot \frac{M_n-m_n}{2} - K(\varepsilon^{N-1}+\dots + \varepsilon +1)$ inverse firings. As the vertices that fire $M_n$ times do not need any inverse firing and the measure of these vertices is positive by the definition of $M_n$, $\varepsilon^N \cdot \frac{M_n-m_n}{2} - K(\varepsilon^{N-1}+\dots + \varepsilon +1)\leq 0$, hence  $M_n-m_n \leq (\frac{1}{\varepsilon})^N\cdot 2K(\varepsilon^{N-1}+\dots + \varepsilon +1)$, which also does not depend on $n$. We conclude that $k = (\frac{1}{\varepsilon})^N\cdot 2K(\varepsilon^{N-1}+\dots + \varepsilon +1)$ suffices.  
\end{proof}

\begin{proof}[Proof of Theorem  \ref{t:activity_exists_with_FDC}]
  Let $\sigma$ be an arbitrary chip configuration on $W$. Since the activity of $\sigma$ exists if and only if the activity of $U\sigma$ exists, we can take a step in the parallel chip-firing, and deal with $\sigma' = U\sigma$ instead of $\sigma$. By Lemma \ref{l:boundedness after one firing}, there exists a bound $K \in \R$ such that $U^n\sigma'(x) \le K$ for each $n \in \N$ and almost all $x \in [0, 1]$. 
  By Proposition \ref{p:lim m_n/n exists} $\lim_{n \to \infty} \frac{m_n(W,\sigma')}{n}$ and $\lim_{n \to \infty} \frac{M_n(W,\sigma')}{n}$ both exist, and by Proposition \ref{p:M_n-m_n bounded} they are the same. 
  
  Since $\frac{m_n(W,\sigma')}{n} \leq \frac{u_n(W,\sigma')(x)}{n} \leq \frac{M_n(W,\sigma')}{n}$ for almost all $x$ and every $n$, it also follows that the limit $\lim_{n\to \infty} \frac{u_n(W,\sigma')(x)}{n}$ exists for almost all $x$, and equals to the value $\lim_{n \to \infty} \frac{m_n(W,\sigma')}{n}$.
\end{proof}

\section{The continuity of the activity}
\label{s:robustness}

In this section we show a ``continuity'' theorem for the activity on graphons of finite diameter.

\begin{defn}[Smooth pair]
	A pair $(W,\sigma)$, where $W$ is a graphon and $\sigma$ is a chip configuration is called \emph{smooth}, if 
\begin{equation}
    \text{for any $n\in\mathbb{N}$, $\lambda(\{x\in[0,1]: \exists k\in \mathbb{N}\quad U^n \sigma(x)= k\cdot \deg_W(x)\})=0$.}
\end{equation}
        We also say that $\sigma$ is a smooth chip configuration on $W$.
\end{defn}

The main goal of the current section is to prove the following theorem concerning the continuity of the activity.

\begin{thm} \label{thm:robustness}
Let $(W,\sigma)$ be a smooth pair and $d > 0$ where $W$ is a connected graphon with $\mindeg(W) \ge d$ and $\sigma$ is a chip configuration. Then for any $\varepsilon > 0$ there exists $\delta>0$ such that if $W'$ is a connected graphon with $\mindeg(W') \ge d$, $\mathrm{d}_{\Box}(W,W')<\delta$ and $\sigma'$ is a chip configuration with $\|\sigma-\sigma'\|_1<\delta$ then $|a(W, \sigma) - a(W', \sigma')| < \varepsilon$.
\end{thm}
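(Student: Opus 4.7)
The plan is to compare $a(W,\sigma)$ and $a(W',\sigma')$ by approximating each with the time-averaged odometer $\tfrac{1}{n}\int u_n\,dx$, which converges to the activity by Proposition \ref{p:lim m_n/n exists} and dominated convergence. The argument has two pillars: establishing \emph{uniform} finite-diameter estimates for all admissible $W'$ so that the average-approximation rate is independent of the perturbation, and using smoothness of $(W,\sigma)$ to transfer $L^1$-closeness of iterated configurations through $n$ firing steps.

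First, I would prove a uniform finite-diameter lemma: there exist $\delta_0>0$ and constants $N\in\N$, $\varepsilon'>0$, depending only on $W$ and $d$, such that every $W'$ with $\mindeg(W')\ge d$ and $\mathrm{d}_\Box(W,W')<\delta_0$ satisfies condition (ii) of Theorem \ref{t:finite diam equiv conditions} with parameters $(N,\varepsilon')$. This is obtained by mimicking the proof of (iii)$\Rightarrow$(ii): the edge-expansion lower bound provided by Claim \ref{c:compactness} for $W$ on the measure interval $[1/2,1-d/2]$ transfers to $W'$ up to an error of size $\mathrm{d}_\Box(W,W')$, which is absorbed for $\delta_0$ small. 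Combined with Lemma \ref{l:boundedness after one firing}, which gives $U^j\sigma'\le K:=1+(\|\sigma\|_1+1)/d$ for $j\ge 1$ once $\|\sigma'\|_1\le\|\sigma\|_1+1$, Proposition \ref{p:M_n-m_n bounded} applied to $(W',U\sigma')$ yields a constant $C$ independent of the perturbation with $M_n(W',U\sigma')-m_n(W',U\sigma')\le C$ for every $n$. Writing $u_n(W',\sigma')=f(W',\sigma')+u_{n-1}(W',U\sigma')$ and noting $\|f(W',\sigma')\|_1\le\|\sigma'\|_1/d$, one deduces $|a(W',\sigma')-\tfrac{1}{n}\int u_n(W',\sigma')\,dx|=O(1/n)$ uniformly in the admissible $(W',\sigma')$. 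Choosing $n$ large makes this error and its $(W,\sigma)$-analogue both smaller than $\varepsilon/3$.

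For this fixed $n$, the task reduces to showing $\tfrac{1}{n}\|u_n(W,\sigma)-u_n(W',\sigma')\|_1<\varepsilon/3$ for $\delta$ small enough. I would prove this by induction on $0\le j\le n$, establishing simultaneously that $\|U^j\sigma - U^j\sigma'\|_1\le\mu_j$ and $\lambda(\{f(W,U^j\sigma)\neq f(W',U^j\sigma')\})\le\nu_j$ with $\mu_j,\nu_j\to 0$ as $\delta\to 0$. The inductive step combines two ingredients: (i) smoothness guarantees that for every $\eta>0$ there exists $\eta^*>0$ such that $U^j\sigma/\deg_W$ is $\eta^*$-bounded away from $\Z$ on a set of measure $>1-\eta$; (ii) a Markov-inequality argument using the lower degree bound $d$, the bound $\|\deg_W-\deg_{W'}\|_1\le 2\mathrm{d}_\Box(W,W')$, and $\mu_j$ shows that $U^j\sigma'/\deg_{W'}$ is $\eta^*$-close to $U^j\sigma/\deg_W$ on a large set, forcing the two floors to agree there. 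To advance $\mu_{j+1}$, I would expand
\[
U^{j+1}\sigma-U^{j+1}\sigma' = (U^j\sigma-U^j\sigma') - (\deg_W f_j-\deg_{W'}f'_j) + \int(Wf_j-W'f'_j)\,dy,
\]
and bound the kernel difference against the integer-valued firing vectors by writing $f_j=\sum_k \mathbf{1}_{\{f_j\ge k\}}$ and applying the cut-distance inequality to each indicator; since $f_j,f'_j$ are uniformly bounded by $K/d$ for $j\ge 1$, this produces a recursion $\mu_{j+1}\le\mu_j+O(\delta+\nu_j)$, so $\mu_n=O(n(\delta+\eta))$, small for our fixed $n$.

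The main obstacle is the base case $j=0$: $\sigma$ lies only in $L^1$, so $f_0=\lfloor\sigma/\deg_W\rfloor$ need not be bounded, and neither the level-set decomposition against $\mathrm{d}_\Box$ nor the bound on $\|(\deg_W-\deg_{W'})f_0\|_1$ terminates. To circumvent this I would split $f_0=f_0\mathbf{1}_{\sigma\le T}+f_0\mathbf{1}_{\sigma>T}$ for a large threshold $T$: the first part is bounded by $T/d$ and the usual cut-norm estimates apply, while the second has $L^1$-norm at most $\int_{\{\sigma>T\}}\sigma/d\,dx\to 0$ as $T\to\infty$ by absolute continuity of the integral. Choosing $T$ first so that the tail contribution is below $\eta$, and then $\delta$ small relative to $T$, closes the base case with an arbitrarily small extra error. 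Assembling via
\[
|a(W,\sigma)-a(W',\sigma')|\le \bigl|a(W,\sigma)-\tfrac{1}{n}\textstyle\int u_n(W,\sigma)\bigr| + \tfrac{1}{n}\|u_n(W,\sigma)-u_n(W',\sigma')\|_1 + \bigl|\tfrac{1}{n}\textstyle\int u_n(W',\sigma')-a(W',\sigma')\bigr|
\]
and bounding each summand by $\varepsilon/3$ completes the proof.
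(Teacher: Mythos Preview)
Your proposal is correct, and Pillar~2 (the inductive propagation of $L^1$-closeness of $U^j\sigma$ and of the firing vectors, using smoothness to control where the floors can differ) is essentially what the paper does in Propositions~\ref{p:continuity: chip configs close after firing} and~\ref{p:continuity: firing vectors close}, including the truncation idea for the unbounded base step. Your level-set decomposition $f_j=\sum_k\mathbf{1}_{\{f_j\ge k\}}$ against the cut norm is in fact a bit cleaner than the paper's argument, which partitions the $x$-axis according to the full sign pattern of $\int_{\{f_j=k\}}(W-W')\,dy$ over all $k$, incurring a factor $2^{K+1}$ instead of a linear one.

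The genuine difference is Pillar~1. You establish a \emph{uniform} finite-diameter estimate for all $W'$ close to $W$ with $\mindeg(W')\ge d$, by transporting the edge-expansion bound of Claim~\ref{c:compactness} through $\mathrm d_\Box$, and then use Proposition~\ref{p:M_n-m_n bounded} to get a uniform bound $M_n(W',\cdot)-m_n(W',\cdot)\le C$, so that $\bigl|a(W',\sigma')-\tfrac1n\int u_n(W',\sigma')\bigr|=O(1/n)$ uniformly. This works, but the paper sidesteps this entire uniformity argument: it uses only the super/subadditivity inequalities $m_n(W',\sigma')/n\le a(W',\sigma')\le M_n(W',\sigma')/n$ (valid for each fixed $n$, no uniform diameter needed), and proves two short comparison lemmas (Lemmas~\ref{l:m_n_tech} and~\ref{l:M_n_tech}) showing that once $\|u_n(W,\sigma)-u_n(W',\sigma')\|_1<d/2$ one has $m_n(W',\sigma')\ge m_n(W,\sigma)-O(1)$ and $M_n(W',\sigma')\le M_n(W,\sigma)+O(1)$. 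Since $n$ is chosen from $(W,\sigma)$ alone so that $M_n(W,\sigma)/n-m_n(W,\sigma)/n$ is small, this sandwiches $a(W',\sigma')$ near $a(W,\sigma)$ directly. The paper's route is therefore more economical; yours proves a somewhat stronger intermediate fact (uniform finite diameter of perturbations) at the cost of an extra lemma.
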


\begin{rmk}
	We note that one cannot leave out the condition of $(W,\sigma)$ being a smooth pair. 
	Take for example the graphon $W\equiv 1$ and the chip configuration $\sigma \equiv 1$. Then $a(W,\sigma)=1$ as each vertex will fire once in each step. However, if we modify $\sigma$ by decreasing its values by some $\varepsilon > 0$ on each vertex then $\|\sigma - \sigma'\|_1=\varepsilon$, on the other hand, the chip configuration becomes stable, hence $a(W,\sigma')=0$.
\end{rmk}

\begin{questi}
  Would Theorem \ref{thm:robustness} remain true if we only required finite diameter for $W$ and $W'$, without asking for a common lower bound on the degrees?
  That is, is the following, stronger form of Theorem \ref{thm:robustness} true?
  
  \emph{Let $(W,\sigma)$ be a smooth pair where $W$ is a graphon of finite diameter and $\sigma$ is a chip configuration. Then for any $\varepsilon > 0$ there exists $\delta>0$ such that if $W'$ is a graphon of finite diameter with $\mathrm{d}_{\Box}(W,W')<\delta$ and $\sigma'$ is a chip configuration with $\|\sigma-\sigma'\|_1<\delta$ then $|a(W, \sigma) - a(W', \sigma')| < \varepsilon$.}
\end{questi}

We prove Theorem \ref{thm:robustness} through a series of lemmas and propositions. First, in Lemma \ref{l:m_n_tech} and \ref{l:M_n_tech} we show that if $\|u_n(W, \sigma) - u_n(W', \sigma')\|_1$ is sufficiently small then the quantities $m_n$ and $M_n$, as defined in Section \ref{ss:existence of activity}, are also close for $(W, \sigma)$ and $(W', \sigma')$. Next, in Proposition \ref{p:continuity: chip configs close after firing} and \ref{p:continuity: firing vectors close} we show that if $\mathrm{d}_\Box(W, W')$ and $\|\sigma - \sigma'\|_1$ are small then $\|U_W\sigma - U_{W'}\sigma'\|_1$ and $\|f(W, \sigma) - f(W', \sigma')\|_1$ are also small. Finally, in the proof of Theorem \ref{thm:robustness}, we put these ingredients together in an inductive argument. 

    \begin{lemma} \label{l:m_n_tech}
        Suppose $\sigma$ is a chip configuration on the graphon $W$ and $\sigma'$ is a configuration on the graphon $W'$. Let $d > 0$ and $n \in \N$ be given. If $\mindeg(W') \ge d$, $\|\sigma'\|_1 \le 2\|\sigma\|_1$ and $\|u_n(W, \sigma) - u_n(W', \sigma')\|_1 < \frac{d}{2}$, then $m_n(W', \sigma') \ge m_n(W, \sigma) - \left(\frac{2}{d} + \frac{4\|\sigma\|_1}{d^2}\right)$. 
   	\end{lemma}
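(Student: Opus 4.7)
The plan is to derive the bound by combining the $L^1$ closeness of $u := u_n(W,\sigma)$ and $u' := u_n(W',\sigma')$ with a mass-balance argument inside $W'$. Write $m := m_n(W,\sigma)$ and $m' := m_n(W',\sigma')$. The case $n = 0$ is trivial since $u_0 \equiv 0$, so I assume $n \geq 1$ and use Lemma \ref{l:boundedness after one firing} to set $K := 1 + 2\|\sigma\|_1/d$, which satisfies $U^n\sigma'(x) \leq K$ for almost every $x$ (because $\|\sigma'\|_1 \leq 2\|\sigma\|_1$ and $\mindeg(W') \geq d$). The target constant factors as $C = \tfrac{2}{d} + \tfrac{4\|\sigma\|_1}{d^2} = \tfrac{2K}{d}$, and I may assume $m' < m$, otherwise the inequality is immediate.

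A naive attempt, using $u \geq m$ a.e., $u' = m'$ on $A := \{u' = m'\}$, and $\|u - u'\|_1 \geq (m - m')\lambda(A)$, only produces an upper bound on $\lambda(A)$ and cannot close the argument: I need some lower bound on the measure of an auxiliary level set of $u'$. Such a bound will follow from mass conservation in $W'$. First I would integrate the firing equation $U^n \sigma' = \sigma' - u' \deg_{W'} + \int u'(y) W'(\cdot, y)\dy$ over $A$; the contributions from $u' = m'$ on $A$ cancel, leaving
\begin{equation*}
\int_{A^c}\bigl(u'(y) - m'\bigr)\, e_A(y) \dy \;=\; \int_A U^n\sigma' \dx - \int_A \sigma' \dx \;\leq\; K\,\lambda(A),
\end{equation*}
where $e_A(y) := \int_A W'(x,y) \dx$, using $\sigma' \geq 0$ and $U^n\sigma' \leq K$.

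Next I would run a threshold argument on the lower level sets of $u'$. Set $J := \lfloor 2K/d \rfloor$ and $B := \{u' \leq m' + J\} \supseteq A$, and show $\lambda(B) \geq d/2$. Suppose towards a contradiction that $\lambda(B) < d/2$. On $B^c$, the integer-valued $u'$ satisfies $u' - m' \geq J + 1 \geq 2K/d$, so the displayed inequality restricted to $B^c$ gives $e(A, B^c) \leq \tfrac{d}{2}\lambda(A)$. On the other hand, since $W' \leq 1$ and $\mindeg(W') \geq d$, for a.e.\ $x \in A$ we have $\deg_{W'}(x, B^c) \geq \deg_{W'}(x) - \lambda(B) > d - d/2 = d/2$, hence $e(A, B^c) > \tfrac{d}{2}\lambda(A)$; these two bounds contradict each other.

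Once $\lambda(B) \geq d/2$ is in hand, the proof finishes with a short $L^1$ estimate: since $u \geq m$ a.e.\ and $u' \leq m' + J$ on $B$, the assumption $m - m' > J$ would force $\|u - u'\|_1 \geq (m - m' - J)\cdot \tfrac{d}{2}$, which together with $\|u - u'\|_1 < d/2$ gives $m - m' - J < 1$, hence $m - m' \leq J \leq 2K/d = C$ since $m - m'$ is an integer. The main obstacle is the threshold step: the constant $J = \lfloor 2K/d \rfloor$ must be calibrated so that the mass-balance upper bound $(J+1)\, e(A, B^c) \leq K\lambda(A)$ exactly matches the degree-based lower bound $e(A, B^c) > (d - \lambda(B))\lambda(A)$, and the gain of $+1$ on $B^c$ coming from integer-valuedness of $u'$ is essential for the two bounds to cross.
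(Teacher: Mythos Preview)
Your argument is correct, but it takes a genuinely different and more elaborate route than the paper's. The paper's proof exploits integer-valuedness of the odometers right at the start: since $\|u-u'\|_1<d/2$ and $|u-u'|\ge 1$ wherever they differ, the set $A=\{u\neq u'\}$ has measure at most $d/2$. On the complement of $A$ one has $u'=u\ge m$, and $\{u'=m'\}\subseteq A$ up to a null set. For any $x$ with $u'(x)=m'$ one then reads off directly from the odometer identity that
\[
U^n_{W'}\sigma'(x)\;\ge\;\sigma'(x)+(m-m')\deg_{W'}(x,[0,1]\setminus A)\;\ge\;(m-m')\tfrac{d}{2},
\]
and comparing with the bound $U^n_{W'}\sigma'\le K$ from Lemma~\ref{l:boundedness after one firing} gives $m-m'\le 2K/d$ in one step. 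Your proof instead first proves an \emph{intrinsic} fact about $(W',\sigma')$ alone---that the sublevel set $\{u'\le m'+\lfloor 2K/d\rfloor\}$ has measure at least $d/2$---via a mass-balance contradiction, and only afterwards invokes the $L^1$ closeness of $u$ and $u'$. This isolates a structural statement of independent interest (the sublevel-set bound depends only on $\mindeg(W')$ and $\|\sigma'\|_1$), but for the lemma at hand the paper's approach is shorter and avoids the threshold calibration entirely.
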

    \begin{proof}
		Suppose that $m_n(W', \sigma') < m_n(W, \sigma)$, otherwise we have nothing to prove. 
        Since $\|u_n(W,\sigma)-u_n(W',\sigma')\|_1 \leq \frac{d}{2}$ and the odometer is integer-valued, for the set $A=\{x\in[0,1]: u_n(W,\sigma)(x) \neq u_n(W',\sigma')(x)\}$ we have $\lambda(A)\leq \frac{d}{2}$.
		
		Let $B=\{x\in[0,1]: u_n(W',\sigma')(x)=m_n(W',\sigma')\}$. By definition of $m_n(W',\sigma')$, $\lambda(B)>0$. Note that $B\subseteq A$ except for a measure zero set.
		For almost all $x\in B$, $\deg_{W'}(x,[0,1]\setminus A)\geq \frac{d}{2}$, since $\lambda(A)\leq \frac{d}{2}$. In the parallel chip-firing started from $\sigma'$ on $W'$, almost all vertex of $[0,1]\setminus A$ fired at least $m_n(W, \sigma) - m_n(W', \sigma')$ times more than almost all vertex in $B$, hence for almost all vertex $x\in B$, $$U^n_{W'}\sigma'(x)\geq \sigma'(x)+\left(m_n(W, \sigma) - m_n(W', \sigma')\right)\frac{d}{2}.$$ 
        
        Using Lemma \ref{l:boundedness after one firing}, $U^n_{W'} \sigma'(x) \le 1 + \frac{\|\sigma'\|_1}{d}$, and thus $U^n_{W'}\sigma'(x) \le 1 + \frac{2\|\sigma\|_1}{d}$ for almost all $x \in [0, 1]$. It follows that $m_n(W, \sigma) - m_n(W', \sigma') \le \frac{2}{d} + \frac{4\|\sigma\|_1}{d^2}$. 
	\end{proof}

    For the analogous claim about $M_n$ we need the chip configuration $\sigma'$ to be bounded.

    \begin{lemma} \label{l:M_n_tech}
        Suppose $\sigma$ is a chip configuration on the graphon $W$ and $\sigma'$ is a configuration on the graphon $W'$. Let $d > 0$, $K \in \N$ and $n \in \N$ be given. If $\mindeg(W') \ge d$, $\|u_n(W, \sigma) - u_n(W', \sigma')\|_1 < \frac{d}{2}$ and $\sigma'(x) < K$ for almost all $x$, then $M_n(W', \sigma') \le M_n(W, \sigma) + \frac{2K}{d}$. 
	\end{lemma}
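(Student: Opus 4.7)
The plan is to mirror the proof of Lemma \ref{l:m_n_tech}, but inverting the roles so that we exploit the \emph{upper} extreme of the odometer together with nonnegativity of $U^n_{W'}\sigma'$ rather than the \emph{lower} extreme together with the upper bound of Lemma \ref{l:boundedness after one firing}. The main idea is that if a positive-measure set $B$ of vertices has fired much more than $M_n(W,\sigma)$ times, they must have received a correspondingly large amount of chips, but the chips can only come from vertices that fired more than $M_n(W,\sigma)$ times, which is forced to lie inside the small set $A$ on which the two odometers disagree.

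First I would assume the nontrivial case $M_n(W',\sigma') > M_n(W,\sigma)$. As in the proof of Lemma \ref{l:m_n_tech}, I let $A = \{x : u_n(W,\sigma)(x) \ne u_n(W',\sigma')(x)\}$, and note that since the odometers are integer-valued and $\|u_n(W,\sigma) - u_n(W',\sigma')\|_1 < d/2$, we have $\lambda(A) < d/2$. Let $B = \{x : u_n(W',\sigma')(x) = M_n(W',\sigma')\}$; then $\lambda(B) > 0$ and, up to a measure-zero set, $B \subseteq A$.

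Next I would fix some $x \in B$ and estimate the chips received at $x$ during the first $n$ steps of the $(W',\sigma')$-dynamics. Splitting the integral over $A$ and its complement and using $u_n(W',\sigma')(y) \le M_n(W,\sigma)$ almost everywhere on $[0,1] \setminus A$,
\begin{align*}
\int_0^1 u_n(W',\sigma')(y)\, W'(x,y)\, dy &\le M_n(W,\sigma)\, \deg_{W'}(x) \\
&\quad + \bigl(M_n(W',\sigma') - M_n(W,\sigma)\bigr)\, \deg_{W'}(x, A),
\end{align*}
and then use $W' \le 1$ to bound $\deg_{W'}(x, A) \le \lambda(A) < d/2$.

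Finally I would combine this with nonnegativity of $U^n_{W'}\sigma'(x)$, which gives
\[
M_n(W',\sigma')\, \deg_{W'}(x) \le \sigma'(x) + \int_0^1 u_n(W',\sigma')(y)\, W'(x,y)\, dy.
\]
Substituting the estimate above yields $\bigl(M_n(W',\sigma') - M_n(W,\sigma)\bigr)\bigl(\deg_{W'}(x) - d/2\bigr) \le \sigma'(x) < K$; since $\deg_{W'}(x) \ge d$, the factor in parentheses is at least $d/2$, so $M_n(W',\sigma') - M_n(W,\sigma) \le 2K/d$. I do not anticipate any serious obstacle here: all the work has been done in Lemma \ref{l:m_n_tech}, and the only real asymmetry is that the counterpart of Lemma \ref{l:boundedness after one firing} (a uniform essential bound) is replaced by the explicit hypothesis $\sigma' < K$ almost everywhere, which is why that hypothesis appears in the statement.
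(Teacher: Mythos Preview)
Your proof is correct and follows essentially the same route as the paper's: define $A$ and $B$ as in Lemma~\ref{l:m_n_tech}, use that outside $A$ the $(W',\sigma')$-odometer is at most $M_n(W,\sigma)$, and combine nonnegativity of $U^n_{W'}\sigma'$ with $\sigma'<K$ and $\deg_{W'}(x,[0,1]\setminus A)\ge d/2$. The only cosmetic difference is that the paper writes the key inequality directly as $0\le U^n_{W'}\sigma'(x)\le \sigma'(x)-(M_n(W',\sigma')-M_n(W,\sigma))\frac{d}{2}$, whereas you rearrange it as $(M_n(W',\sigma')-M_n(W,\sigma))(\deg_{W'}(x)-d/2)\le\sigma'(x)$; these are equivalent.
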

    \begin{proof}
        The proof of the lemma is similar to the previous one. Using the boundedness of $\sigma'$, $M_n(W', \sigma') < \infty$, hence we can suppose that $M_n(W, \sigma) < \infty$ as well. We can also suppose that $M_n(W', \sigma') > M_n(W, \sigma)$, otherwise we have nothing to prove. Let $A$ be defined as in the proof of Lemma \ref{l:m_n_tech}, and let $B=\{x\in[0,1]: u_n(W',\sigma')(x)=M_n(W',\sigma')\}$. Then again, $\lambda(A) \le \frac{d}{2}$, $\lambda(B) > 0$, $B \subseteq A$ and for almost all $x\in B$, $\deg_{W'}(x,[0,1]\setminus A)\geq \frac{d}{2}$. 
        
        In the parallel chip-firing started from $\sigma'$ on $W'$, almost all vertex of $[0,1]\setminus A$ fired at least $M_n(W', \sigma') - M_n(W, \sigma)$ times less than almost all vertex in $B$, hence for almost all vertex $x\in B$, $$0 \le U^n_{W'}\sigma'(x) \le \sigma'(x) - \left(M_n(W', \sigma') - M_n(W, \sigma)\right)\frac{d}{2}.$$ Using $\sigma'(x) < K$, it follows that $M_n(W', \sigma') - M_n(W, \sigma) \le \frac{2K}{d}$. 
	\end{proof}
    
The following technical lemma is used many times in the proof of the next proposition.

\begin{lemma}
    \label{l:cut-distance and degree}
    For two arbitrary graphons $W$ and $W'$, $\|\deg_W - \deg_{W'}\|_1 \le 2\mathrm{d}_\Box(W, W')$. Therefore, for any $\eta > 0$, $\lambda(\{x \in [0, 1] : |\deg_W(x) - \deg_{W'}(x)| \ge \eta\}) \le \frac{2\mathrm{d}_\Box(W, W')}{\eta}$. 
\end{lemma}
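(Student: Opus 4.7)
The plan is to reduce the $L^1$ bound to two applications of the definition of cut distance, one for the set where $\deg_W \ge \deg_{W'}$ and one for its complement, and then obtain the tail bound by Markov's inequality.

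First I would rewrite the pointwise difference as $\deg_W(x) - \deg_{W'}(x) = \int_0^1 (W(x,y) - W'(x,y))\dy$, which is measurable in $x$. Let $S = \{x \in [0,1] : \deg_W(x) \ge \deg_{W'}(x)\}$; since $\deg_W$ and $\deg_{W'}$ are measurable, so is $S$. Then
\begin{equation*}
\|\deg_W - \deg_{W'}\|_1 = \int_S (\deg_W(x) - \deg_{W'}(x))\dx + \int_{S^c}(\deg_{W'}(x) - \deg_W(x))\dx.
\end{equation*}

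Next I would apply the definition of $\mathrm{d}_\Box$ with the second set taken to be $T = [0,1]$. The first summand becomes $\int_S\int_0^1 (W(x,y) - W'(x,y))\dy\dx$, which by the definition of $\mathrm{d}_\Box(W,W')$ has absolute value at most $\mathrm{d}_\Box(W,W')$; since the integrand on $S$ is nonnegative, the absolute value bars can be dropped. The second summand is handled by the same argument applied to the pair $(W',W)$ (or equivalently by using the set $S^c$ in the supremum), giving the same bound $\mathrm{d}_\Box(W,W')$. Adding the two estimates yields $\|\deg_W - \deg_{W'}\|_1 \le 2\,\mathrm{d}_\Box(W,W')$.

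For the second assertion, I would simply invoke Markov's inequality: writing $A_\eta = \{x : |\deg_W(x) - \deg_{W'}(x)| \ge \eta\}$, one has $\eta\,\lambda(A_\eta) \le \|\deg_W - \deg_{W'}\|_1 \le 2\,\mathrm{d}_\Box(W,W')$, and division by $\eta$ gives the stated bound. There is no real obstacle here; the only subtle point is noticing that the signed-integral form of the cut-distance definition (with the absolute value outside) already bounds both signed integrals once the choice of the cut set $S$ is allowed, so no factor beyond $2$ is needed.
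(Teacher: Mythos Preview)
Your proof is correct and follows essentially the same approach as the paper: split $[0,1]$ according to the sign of $\deg_W - \deg_{W'}$, bound each piece by a single application of the cut-distance definition with $T=[0,1]$, and then deduce the tail bound by Markov's inequality. The only cosmetic difference is that the paper uses the strict-inequality set $\{x:\deg_W(x)>\deg_{W'}(x)\}$ instead of your $S$, which changes nothing.
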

\begin{proof}
    Let $A = \{x \in [0, 1] : \deg_W(x) > \deg_{W'}(x)\}$. Then 
    \begin{align*}
        \mathrm{d}_\Box(W, W') &\ge \int_A \int_0^1 W(x, y) - W'(x, y) \dy \dx = \int_A \deg_W(x) - \deg_{W'}(x) \dx \\
          &= \int_A \left|\deg_W(x) - \deg_{W'}(x)\right| \dx.
    \end{align*}
    Similarly, one can show that $\int_{A^c} |\deg_W(x) - \deg_{W'}(x)| \dx \le \mathrm{d}_\Box(W, W')$, hence the first assertion of the proposition follows. 
\end{proof}
    
We state the next proposition in a more general form than what is needed to prove Theorem \ref{thm:robustness}. As a consequence, the proof requires an extra technical step. However, the statement is simpler this way, and we believe it might be interesting on its own.

\begin{prop}\label{p:continuity: chip configs close after firing}
	Let $\sigma$ be a chip configuration on a connected graphon $W$ such that $(W, \sigma)$ is a smooth pair. Then for any $\varepsilon > 0$ there exists a $\delta > 0$ such that if $\sigma'$ is a chip configuration on a connected graphon $W'$ with $\mathrm{d}_\Box(W, W') < \delta$ and $\|\sigma - \sigma'\|_1 < \delta$, then $\|U_W \sigma - U_{W'} \sigma'\|_1 < \varepsilon$. 
\end{prop}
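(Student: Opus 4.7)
The plan is to rewrite the update in the form $U_W\sigma = r_W + T_W f_W$, where
$r_W(x) := \sigma(x) - \deg_W(x) f_W(x) \in [0, \deg_W(x))$ and $T_W g(x) := \int_0^1 g(y) W(x,y)\,dy$, so that
\begin{equation*}
U_W\sigma - U_{W'}\sigma' \;=\; (r_W - r_{W'}) + (T_W f_W - T_{W'} f_{W'}),
\end{equation*}
and bound the two summands by partitioning $[0,1]$ into a good set on which $f_W = f_{W'}$ pointwise and a bad set of small measure on which crude bounds suffice.

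First I fix parameters $K, d_0, \eta > 0$ depending only on $(W,\sigma)$ and $\varepsilon$. Since $W$ is connected, $\lambda(\{\deg_W = 0\}) = 0$; since $\sigma \in L^1$ and $(W,\sigma)$ is smooth, both $\lambda(E_\eta)$ and $\int_{E_\eta}\sigma$ tend to $0$ as $\eta \downarrow 0$, where $E_\eta := \{x : \mathrm{dist}(\sigma(x)/\deg_W(x), \mathbb{Z}) < \eta\}$. Using absolute continuity of the integral of $\sigma$, I choose $K$ large and $d_0,\eta$ small enough that each of $\int_{\{\sigma>K\}}\sigma$, $\int_{\{\deg_W<2d_0\}}\sigma$, $\int_{E_\eta}\sigma$, $\lambda(\{\deg_W<2d_0\})$, and $\lambda(E_\eta)$ is at most $\varepsilon/C$ for a large universal constant $C$. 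On the good core $G := \{\sigma \le K\} \cap \{\deg_W \ge 2d_0\} \cap E_\eta^c$ one then has $f_W \le M$, with $M := K/d_0$.

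Next, for $\delta$ very small relative to $K, d_0, \eta, M$, I define the bad set
\begin{equation*}
    B := ([0,1]\setminus G) \cup \bigl\{|\deg_W - \deg_{W'}| \ge \eta d_0^2/(4K)\bigr\} \cup \bigl\{|\sigma - \sigma'| \ge \eta d_0/4\bigr\}.
\end{equation*}
By Markov's inequality and Lemma~\ref{l:cut-distance and degree}, the last two sets contribute at most $O(K\delta/(\eta d_0^2))$ to $\lambda(B)$. The identity
\begin{equation*}
\frac{\sigma}{\deg_W} - \frac{\sigma'}{\deg_{W'}} = \frac{\sigma - \sigma'}{\deg_{W'}} + \frac{\sigma(\deg_{W'}-\deg_W)}{\deg_W \deg_{W'}}
\end{equation*}
shows that on $[0,1]\setminus B$ the two ratios agree up to $\eta/2$, and because $x \notin E_\eta$ this forces $f_W(x) = f_{W'}(x)$. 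For $\|r_W - r_{W'}\|_1$ I then use $|r_W - r_{W'}| \le |\sigma - \sigma'| + M|\deg_W - \deg_{W'}|$ on $[0,1]\setminus B$ and $|r_W - r_{W'}| \le 2$ on $B$. For the operator difference I truncate: set $\tilde f_W := \min(f_W, M)$ and $\tilde f_{W'} := \min(f_{W'}, M)$. Since $\{f_W > M\} \subseteq \{\sigma > K\} \cup \{\deg_W < d_0\}$, the truncation error $\|T_W(f_W - \tilde f_W)\|_1 \le \int_{\{f_W>M\}}\sigma$ is small; the analogous bound for $(W',\sigma')$ follows after transferring $\int_{\{\sigma'>K\}}\sigma'$ and $\int_{\{\deg_{W'}<d_0\}}\sigma'$ to integrals against $\sigma$ via $\|\sigma - \sigma'\|_1 < \delta$ and Lemma~\ref{l:cut-distance and degree}. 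The remaining piece splits as
\begin{equation*}
T_W\tilde f_W - T_{W'}\tilde f_{W'} = T_W(\tilde f_W - \tilde f_{W'}) + (T_W - T_{W'})\tilde f_{W'},
\end{equation*}
where $\|T_W(\tilde f_W - \tilde f_{W'})\|_1 \le M\lambda(B)$ (the integrand vanishes off $B$), and a layer-cake decomposition $\tilde f_{W'} = \int_0^M \mathbf{1}_{\{\tilde f_{W'} > t\}}\,dt$ together with the definition of $\mathrm{d}_\Box$ yields $\|(T_W - T_{W'})\tilde f_{W'}\|_1 \le 2M\,\mathrm{d}_\Box(W, W') \le 2M\delta$.

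The main obstacle, and presumably the ``extra technical step'' flagged by the authors, is the absence of a uniform lower bound on $\deg_W$ in the hypothesis, which allows both $f_W$ and $f_{W'}$ to be unbounded. The whole argument must be arranged so that the degenerate region $\{\deg_W < d_0\}$ enters only through the $L^1$-mass $\int_{\{\deg_W<d_0\}}\sigma$, which is small by absolute continuity, and never through a pointwise bound on $f_W$; transferring the corresponding control to $\{\deg_{W'} < d_0\}$ through $\|\deg_W - \deg_{W'}\|_1 \le 2\,\mathrm{d}_\Box(W,W')$ and $\|\sigma - \sigma'\|_1 < \delta$ is the other delicate point.
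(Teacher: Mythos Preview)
Your approach parallels the paper's closely: isolate a bad set $B$ (the paper's $A_\eta$) on which the $L^1$-mass of $\sigma$ is small, force $f_W=f_{W'}$ off $B$, and control the received-mass term via the cut distance. Your layer-cake estimate $\|(T_W-T_{W'})\tilde f_{W'}\|_1\le 2M\,\mathrm{d}_\Box(W,W')$ is in fact tidier than the paper's sign-partition argument, which pays an exponential constant of order $2^{1/\eta^2}$.

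There is, however, a genuine gap at the step $\|T_W(\tilde f_W-\tilde f_{W'})\|_1\le M\lambda(B)$: this right-hand side need not be small under your parameter choices. Take $W(x,y)=xy$, which is connected with $\deg_W(x)=x/2$, and any bounded smooth $\sigma$ with $\|\sigma\|_\infty\le 1$. Then you are forced to take $K\ge 1$, hence $M=K/d_0\ge 1/d_0$, while $\lambda(\{\deg_W<2d_0\})=4d_0$ for $d_0\le 1/4$; thus $M\lambda(B)\ge 4$ for every admissible $(K,d_0)$. More generally, $M$ and the measure of the low-degree piece of $B$ are inversely coupled through $d_0$, and no order of fixing $K,d_0,\eta$ breaks the circularity. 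The paper avoids this by never pairing the pointwise bound on $f$ with the \emph{measure} of the bad set: its decomposition $f=f_1+f_2$ bounds the effect of firing on $A_\eta$ by $2\int_{A_\eta}\sigma$, and the large constant $1/\eta^2$ only ever multiplies $\delta$. Your argument is repaired in exactly the same spirit: bound instead
\[
\|T_W(\tilde f_W-\tilde f_{W'})\|_1\le\int_B\bigl(\tilde f_W+\tilde f_{W'}\bigr)\deg_W\le\int_B\sigma+\int_B\sigma'+M\|\deg_W-\deg_{W'}\|_1,
\]
using $\tilde f_W\deg_W\le\sigma$ and $\tilde f_{W'}\deg_{W'}\le\sigma'$. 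The first two terms are governed by the five quantities you already made $\le\varepsilon/C$ (together with absolute continuity of $\int\sigma$ on the $\delta$-pieces of $B$, and $\int_B\sigma'\le\delta+\int_B\sigma$), while the last is $\le 2M\delta$, harmless once $\delta$ is chosen after $K,d_0,\eta$.
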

\begin{proof}
	Throughout the proof we use the notation $f(x) = f(W, \sigma)(x)$ for the given pair $(W, \sigma)$, and similarly $f'(x) = f(W', \sigma')(x)$ for a graphon $W'$ and a chip configuration $\sigma'$ satisfying the conditions of the proposition for a $\delta > 0$ specified later. 
	
	For $\eta > 0$ we collect the points at which the graphons or the chip configurations behave badly, so let 
	\begin{align*}
		N &= \{x \in [0, 1] : f(x) \neq f'(x)\}, \\
		U_\eta &= \left\{x \in [0, 1] : \sigma(x) \ge \frac{1}{\eta}\right\},\;\;\;
		U'_\eta = \left\{x \in [0, 1] : \sigma'(x) \ge \frac{1}{\eta}\right\},\\
		L_\eta &= \{x \in [0, 1] : \deg_W(x) \le \eta\}, \;\;\;
		L'_\eta = \{x \in [0, 1] : \deg_{W'}(x) \le \eta\},\\
		A_\eta &= N \cup U_\eta \cup U'_\eta \cup L_\eta \cup L'_\eta.
	\end{align*}
	Then we have the following.
	\begin{lemma}
		\label{l:sigma-sigma' is small}
		For any $\varepsilon' > 0$, there exist $\eta > 0$ and $\delta > 0$ such that if $W'$ is a graphon and $\sigma'$ is a chip configuration with $\mathrm{d}_\Box(W, W') < \delta$ and $\|\sigma - \sigma'\|_1 < \delta$ then $\int_{A_\eta} \sigma + \sigma' < \varepsilon'$.
	\end{lemma}
	\begin{proof}
		We first claim that for a given $\varepsilon'' > 0$ it is enough to find $\eta > 0$ and $\delta > 0$ such that if the conditions of the lemma hold then $\lambda(A_\eta) < \varepsilon''$. Indeed, since $\sigma$ is in $L^1$, for any $\varepsilon'$ there exists $\varepsilon''$ such that $\lambda(A_\eta) < \varepsilon''$ implies $\int_{A_\eta} \sigma < \frac{\varepsilon'}{3}$. Let us fix such an $\varepsilon''$. Since $\|\sigma - \sigma'\|_1 < \delta$, $\int_{A_\eta} \sigma' < \delta + \int_{A_\eta} \sigma$. Therefore, if $\eta$ and $\delta$ are small enough so that $\delta < \frac{\varepsilon'}{3}$ and $\lambda(A_\eta) < \varepsilon''$ then $\int_{A_\eta} \sigma + \sigma' < \varepsilon'$. 
		
		Let us fix an $\varepsilon'' > 0$ towards proving the above statement. First we deal with the sets $U_\eta$ and $U'_\eta$. Since $\sigma \in L^1$, there exists $\eta_0 > 0$ such that if $\eta \le \eta_0$ then $\lambda(U_{2\eta}) \le \frac{\varepsilon''}{4}$. Then $\lambda(U'_\eta \setminus U_{2\eta}) < 2\eta\delta$, since $\|\sigma - \sigma'\|_1 < \delta$. Hence, as $U_\eta \subseteq U_{2\eta}$, $\lambda(U_\eta \cup U'_\eta) < \frac{\varepsilon''}{4} + 2\eta\delta$. Clearly we can choose $\delta$ small enough so that for any $\eta \le \eta_0$, $\lambda(U_\eta \cup U'_\eta) < \frac{\varepsilon''}{3}$. 
		
		Next, we bound the measure of $L_\eta \cup L'_\eta$. Since $W$ is connected, $\lambda(\{x : \deg_W(x) = 0\}) = 0$. It follows that there exists $\eta_1 > 0$ such that if $\eta \le \eta_1$ then $\lambda(L_{2\eta}) \le \frac{\varepsilon''}{4}$. It is easy to check that $L'_\eta \subseteq L_{2\eta} \cup \{x : |\deg_W(x) - \deg_{W'}(x)| \ge \eta\}$. Therefore, using Lemma \ref{l:cut-distance and degree} and the fact that $\mathrm{d}_\Box(W, W') < \delta$, $\lambda(L'_\eta \setminus L_{2\eta}) \le \frac{\mathrm{d}_\Box(W, W')}{\eta} < \frac{\delta}{\eta}$. Then, using $L_\eta \subseteq L_{2\eta}$, for any fixed $\eta \le \eta_1$ we can choose $\delta > 0$ so that $\lambda(L_\eta \cup L'_\eta) < \frac{\varepsilon''}{3}$. 
		
		Finally, we deal with $N$. For $\zeta > 0$ let 
		$$B_\zeta = \{ x : \exists t \in \N\left(t \deg_W(x) + \zeta(t+1) < \sigma(x) < (t+1)\deg_W(x) - \zeta(t + 1) \right) \}.$$
		Since $(W, \sigma)$ is smooth and $W$ is connected, the set $B = \{ x : \deg_W(x) = 0 \text{ or } \exists t \in \N \left(\sigma(x) = t \deg_W(x)\right)\}$ is of measure $0$. Since $\bigcup_{\zeta > 0} B_\zeta = B^c$ and $\zeta < \zeta'$ implies $B_\zeta \supseteq B_{\zeta'}$, there exists $\zeta > 0$ such that $\lambda(B_\zeta) \ge 1 - \frac{\varepsilon''}{6}$. Let us fix such a $\zeta$. 
		
		Set 
		\begin{align*}
			N_\zeta^1 = \{x \in B_\zeta : f(x) > f'(x)\}, \\
			N_\zeta^2 = \{x \in B_\zeta : f(x) < f'(x)\}, 
		\end{align*}
		and note that $N \subseteq B_\zeta^c \cup N_\zeta^1 \cup N_\zeta^2$, hence it is enough to bound the measure of these two sets. 
		
		If $x \in N_\zeta^1$ then for some $t \in \N$, 
		\begin{align*}
			\sigma(x) &> t \deg_W(x) + \zeta(t + 1) \text {, and} \\
			\sigma'(x) &< t\deg_{W'}(x). 
		\end{align*}
		It follows that either $\sigma(x) \ge \sigma'(x) + \zeta$ or $\deg_{W'}(x) \ge \deg_W(x) + \zeta$, hence 
		$$N_\zeta^1 \subseteq \{x : \sigma(x) \ge \sigma'(x) + \zeta\} \cup \{x : \deg_{W'}(x) \ge \deg_W(x) + \zeta\}.$$
		
		Since $\|\sigma - \sigma'\|_1 < \delta$, $\lambda(\{x : \sigma(x) \ge \sigma'(x) + \zeta\}) < \frac{\delta}{\zeta}$. Using Lemma \ref{l:cut-distance and degree}, $\lambda(\{x : \deg_{W'}(x) \ge \deg_W(x) + \zeta\}) \le \frac{2\mathrm{d}_\Box(W, W')}{\zeta} < \frac{2\delta}{\zeta}$. 
		It follows that $\lambda(N_\zeta^1) < \frac{3\delta}{\zeta}$. 
		
		A similar argument yields that $\lambda(N_\zeta^2) < \frac{3\delta}{\zeta}$, hence, using $N \subseteq B_\zeta^c \cup N_\zeta^1 \cup N_\zeta^2$, $\lambda(N) < \frac{\varepsilon''}{6} + \frac{6\delta}{\zeta}$. It is clear, that by choosing a small enough $\delta$, $\lambda(N) < \frac{\varepsilon''}{3}$. Therefore we can complete the proof by first fixing any positive $\eta \le \min\{\eta_0, \eta_1\}$ and then choosing a small enough $\delta$, so that $\lambda(U_\eta \cup U'_\eta), \lambda(L_\eta \cup L'_\eta), \lambda(N) < \frac{\varepsilon''}{3}$.
	\end{proof}
	
	We apply Lemma \ref{l:sigma-sigma' is small} with $\varepsilon' = \frac{\varepsilon}{5}$ to obtain $\eta > 0$ and $\delta_0$, hence $\mathrm{d}_\Box(W, W') < \delta_0$, $\|\sigma-\sigma'\|_1 < \delta_0$ imply $\int_{A_\eta} \sigma + \sigma' < \frac{\varepsilon}{5}$. The final value for $\delta$ will be chosen to be less than $\delta_0$. 
	
	Now we investigate the effect of firing the vertices in $[0, 1]  \setminus {A_\eta}$ and in ${A_\eta}$ separately. So let $f = f_1 + f_2$ be the unique decomposition with $f_1(x) = 0$ if $x \in {A_\eta}$ and $f_2(x) = 0$ if $x \in [0, 1] \setminus {A_\eta}$. Let 
	\begin{align*}
		\sigma_1(x) &= \sigma(x) - f_1(x) \deg_W(x) + \int_0^1 f_1(y) W(x, y) \dy, \\
		\sigma_2(x) &= \sigma_1(x) - f_2(x) \deg_W(x) + \int_0^1 f_2(y) W(x, y) \dy.
	\end{align*}
	We define $f'_1$, $f'_2$, $\sigma'_1$ and $\sigma'_2$ analogously for $W'$ and $\sigma'$. It is straightforward to check that $\sigma_2 = U_W \sigma$ and $\sigma'_2 = U_{W'}\sigma'$, so it is enough to prove that $\|\sigma_2 - \sigma'_2\|_1 < \varepsilon$. Since 
	$$\|\sigma_2 - \sigma'_2\|_1 \le \|\sigma_2 - \sigma_1\|_1 + \|\sigma_1 - \sigma'_1\|_1 + \|\sigma'_1 - \sigma'_2\|_1,$$
	it is enough to bound these quantities. 
	
	Then 
	\begin{align*}
		 \|\sigma_2 - \sigma_1\|_1 
		 &= \int_0^1\left| -f_2(x)\deg_W(x) 
		   + \int_0^1f_2(y)W(x, y) \dy \right| \dx \\ 
		 &\le \int_0^1 f_2(x)\deg_W(x) \dx 
		   + \int_0^1\int_0^1f_2(y)W(x, y) \dy \dx \\ 
		 &\le \int_{A_\eta}\sigma(x) \dx 
		   + \int_0^1\int_0^1f_2(y)W(x, y) \dx \dy \\
		 &\le \int_{A_\eta}\sigma(x) \dx + \int_0^1f_2(y)\deg_W(y) \dy 
		 \le \int_{A_\eta}\sigma(x) \dx + \int_{A_\eta} \sigma(y) \dy \\
		 &\le \frac{2}{5}\varepsilon,
	\end{align*}
	where we used Fubini's theorem for non-negative functions to interchange the order of integration. A similar calculation shows that $\|\sigma'_2 - \sigma'_1\|_1 \le \frac{2}{5}\varepsilon$. 
	
	It remains to show that $\|\sigma_1 - \sigma_1'\|_1 < \frac{\varepsilon}{5}$, which is the tricky part of the proof. 
	\begin{align*}
		\|\sigma_1 - \sigma'_1\|_1 
		\le & \; \|\sigma - \sigma'\|_1
		  + \int_0^1\left|f_1(x)\deg_W(x) - f'_1(x) \deg_{W'}(x)\right| \dx  \\ 
		  &+ \int_0^1\left|\int_0^1f_1(y) W(x, y) \dy - \int_0^1f'_1(y) W'(x, y)\dy \right| \dx \\
		\le & \; \delta + \int_{A_\eta^c} f(x)|\deg_W(x) - \deg_{W'}(x)| \dx \\
		  &+\int_0^1\left|\int_{A_\eta^c} f(y)(W(x, y) - W'(x, y))\dy \right| \dx.
	\end{align*}
	
	Using the fact that $f(x) \le \frac{\sigma(x)}{\deg_W(x)} \le \frac{\frac{1}{\eta}}{\eta} = \frac{1}{\eta^2}$ for every $x \in A_\eta^c$ and Lemma \ref{l:cut-distance and degree}, 
	\begin{align*}
		\int_{A_\eta^c} f(x)|\deg_W(x) - \deg_{W'}(x)| \dx &\le \int_{A_\eta^c} \frac{1}{\eta^2} |\deg_W(x) - \deg_{W'}(x)| \dx \\ 
		&\le \frac{2\mathrm{d}_\Box(W, W')}{\eta^2} < \frac{2\delta}{\eta^2}.
	\end{align*}
	
	It remains to bound the integral $\int_0^1\left|\int_{A_\eta^c} f(y)(W(x, y) - W'(x, y))\dy \right| \dx$. This part is the most technical one. 
	
	Let $K$ be the largest integer with $K \le \frac{1}{\eta^2}$, and for $0 \le j \le K$, let $E_j = \{y \in [0, 1] : f(y) = j\}$. 
	For each fixed $x \in [0, 1]$, the integral $\int_{E_j} W(x, y) - W'(x, y) \dy$ can be either negative and non-negative for each $j \le K$. These give us $2^{K + 1}$ many possibilities for the sign of the integrals, thus partitioning $[0, 1]$. We encode the signs using a finite sequence $s\in\{0,1\}^{K+1}$, where $0$ corresponds to non-negative integrals and $1$ corresponds to negative ones, so let
	$$I_s = \Big\{x \in [0, 1] : \forall j \le K \; \Big(s(j) = 0 \Leftrightarrow \int_{E_j} W(x, y) - W'(x, y) \dy \ge 0\Big)\Big\}.$$ 
	Hence
	\begin{align*}
	  \int_0^1 &\Big|\int_{A_\eta^c} f(y)(W(x,y)-W'(x,y))\dy\Big|\dx \\ 
	  &\leq \int_0^1 \sum_{j \le K} \Big| \int_{E_j} j (W(x,y)-W'(x,y))\dy\Big|\dx \\ 
	  &= \sum_{s \in \{0,1\}^{K + 1}} \int_{I_s} \sum_{j \le K} (-1)^{s(j)} \int_{E_j} j (W(x,y)-W'(x,y))\dy\dx \\ 
	  &\le K \sum_{s \in \{0,1\}^{K + 1}} \sum_{j \le K} \int_{I_s} \int_{E_j} (-1)^{s(j)} (W(x,y)-W'(x,y))\dy\dx \\ 
	  &\le K(K + 1) 2^{K + 1}\mathrm{d}_\Box(W, W') < \frac{1}{\eta^2} \left(\frac{1}{\eta^2} + 1\right) 2^{\frac{1}{\eta^2} + 1}\delta,
	\end{align*}
	where we used the definition of $\mathrm{d}_\Box$ and Fubini's theorem for the integrable function $(x, y) \mapsto (-1)^{s(j)} (W(x, y) - W'(x, y))$. Hence, 
	$\|\sigma_1 - \sigma'_1\|_1 \le \delta + \frac{2\delta}{\eta^2} + \frac{1}{\eta^2} \left(\frac{1}{\eta^2} + 1\right) 2^{\frac{1}{\eta^2} + 1}\delta$. Therefore, by choosing $\delta$ small enough, we can make sure that $\|\sigma_1 - \sigma'_1\| < \frac{\varepsilon}{5}$, completing the proof of the Proposition \ref{p:continuity: chip configs close after firing}.
\end{proof}

\begin{prop}\label{p:continuity: firing vectors close}
    Suppose $(W, \sigma)$ is a smooth pair, where $W$ is a graphon with finite diameter and $\sigma$ is a chip configuration. Then for any $\varepsilon > 0$ and $d > 0$ there exists a $\delta > 0$ such that if $\sigma'$ is a chip configuration on a graphon $W'$ with $\mindeg(W') \ge d$, $\mathrm{d}_\Box(W, W') < \delta$ and $\|\sigma - \sigma'\|_1 < \delta$, then $\|f(W, \sigma) - f(W', \sigma')\|_1 < \varepsilon$. 
\end{prop}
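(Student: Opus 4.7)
The plan is to bound $\|f(W,\sigma) - f(W',\sigma')\|_1$ by splitting $[0,1]$ into a \emph{tail} region, where at least one firing vector is large, and a \emph{bulk} region where both are bounded by some $K$, then using the smoothness of $(W,\sigma)$ to control the measure of disagreement on the bulk. Writing $f = f(W,\sigma)$ and $f' = f(W',\sigma')$ for short, the first step is to observe that by Theorem \ref{t:finite diam equiv conditions}(iii), the finite diameter condition on $W$ gives $d_0 := \mindeg(W) > 0$, so after replacing $d$ by $\min(d, d_0)$ we may assume $\mindeg(W)\ge d$ as well. Then $f \le \sigma/d$ and $f' \le \sigma'/d$ almost everywhere, so both firing vectors lie in $L^1$.

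For the tail, I use the absolute continuity of $\int\sigma$: choose $K$ so large that $\int_{\{\sigma \ge Kd\}} \sigma < \varepsilon d/8$, which immediately gives $\int_{\{f \ge K\}} f < \varepsilon/8$. To handle $f'$ I use $\sigma' \le \sigma + |\sigma-\sigma'|$ on the set $\{\sigma' \ge Kd\}$, whose Lebesgue measure is at most $(\|\sigma\|_1+\delta)/(Kd)$; by enlarging $K$ and requiring $\delta$ small, absolute continuity again yields $\int_{\{f'\ge K\}} f' < \varepsilon/4$.

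The bulk estimate is the heart of the argument. Let $N = \{x : f(x) \neq f'(x)\}$; since $|f - f'| \le K$ on $N\cap\{f,f'\le K\}$, it suffices to force $\lambda(N\cap\{f,f'\le K\}) < \varepsilon/(4K)$. If $x$ lies in this set, then some integer $t \in \{1,\dots,K\}$ separates $\sigma(x)/\deg_W(x)$ and $\sigma'(x)/\deg_{W'}(x)$. The $n=0$ case of smoothness of $(W,\sigma)$ says $\lambda(\{\sigma = t\deg_W\}) = 0$ for each $t$, so setting
\[
B_\zeta = \{x : \exists\, t \in \{1,\dots,K\},\ |\sigma(x) - t\deg_W(x)| \le \zeta\},
\]
we have $\lambda(B_\zeta) \downarrow 0$ as $\zeta \downarrow 0$, and I pick $\zeta$ so small that $\lambda(B_\zeta) < \varepsilon/(8K)$. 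On $B_\zeta^c$ the straddling condition, say $\sigma(x) < t\deg_W(x) - \zeta$ and $\sigma'(x) \ge t\deg_{W'}(x)$, rearranges to $\sigma'(x) - \sigma(x) > \zeta + t(\deg_{W'}(x) - \deg_W(x))$, forcing either $|\sigma(x)-\sigma'(x)| > \zeta/2$ or $|\deg_W(x) - \deg_{W'}(x)| > \zeta/(2K)$. Markov's inequality and Lemma \ref{l:cut-distance and degree} bound the measures of these two sets by $2\delta/\zeta$ and $4K\delta/\zeta$ respectively, both of which fall below $\varepsilon/(16K)$ once $\delta$ is sufficiently small.

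Summing the tails and the bulk estimate gives $\|f - f'\|_1 < \varepsilon$. The main obstacle is the tight coordination of three scales: $K$ is selected from $\varepsilon$ alone, then $\zeta$ is chosen using $K$ and the smoothness modulus of $(W,\sigma)$, and finally $\delta$ must be small enough to dominate the factors of $K/\zeta$ appearing in both the cut-distance term and the absolute continuity estimate at scale $1/K$. This is the same type of bookkeeping carried out in Lemma \ref{l:sigma-sigma' is small}, but with an additional factor of $K$ that reflects the fact that we integrate $|f - f'|$ rather than $\mathbf{1}_N$; the sensitivity of integer rounding under small $L^1$ perturbations is the sole reason smoothness is needed here.
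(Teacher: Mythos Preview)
Your argument is correct. The pointwise inequality
\[
|f-f'|\le f\cdot\mathbf{1}_{\{f\ge K\}}+f'\cdot\mathbf{1}_{\{f'\ge K\}}+K\cdot\mathbf{1}_{N\cap\{f<K,\,f'<K\}}
\]
holds (on each tail piece the larger firing vector dominates the difference since the other is nonnegative), so your three estimates really do sum to a bound on $\|f-f'\|_1$; and the tail, $B_\zeta$, and Markov/cut-distance bounds are all sound.

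The paper's proof is considerably shorter because it reuses Lemma~\ref{l:sigma-sigma' is small} wholesale rather than reproving a variant of it. That lemma already produces a set $A_\eta\supseteq N$ (containing the disagreement set together with high-$\sigma$, high-$\sigma'$, and low-degree pieces) with $\int_{A_\eta}(\sigma+\sigma')<\varepsilon'$; since $f=f'$ off $A_\eta$ and $f\le\sigma/\theta$, $f'\le\sigma'/\theta$ almost everywhere with $\theta=\min\{\mindeg(W),d\}$, one gets $\|f-f'\|_1\le\int_{A_\eta}(f+f')\le\frac{1}{\theta}\int_{A_\eta}(\sigma+\sigma')<\varepsilon$ in one line. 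Your proof and the paper's rest on the same two ingredients (absolute continuity of $\int\sigma$ for the tail, smoothness plus Lemma~\ref{l:cut-distance and degree} for the disagreement set), but you bound $|f-f'|$ on the bulk by the constant $K$ and then control $\lambda(N\cap\text{bulk})$, whereas the paper bounds $f+f'$ by $(\sigma+\sigma')/\theta$ on all of $A_\eta$ and controls $\int_{A_\eta}(\sigma+\sigma')$ directly. Both work; the paper's route avoids the extra parameter $K$ and the three-scale bookkeeping you describe at the end.
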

\begin{proof}
	Let $\theta = \min\{\mindeg(W), d\}$, and apply Lemma \ref{l:sigma-sigma' is small} with $\varepsilon' = \frac{\varepsilon \theta}{2}$ to obtain $\eta > 0$ and $\delta > 0$. Then, as $f(W, \sigma)(x) \le \frac{\sigma(x)}{\deg_W(x)} \le \frac{\sigma(x)}{\theta}$ for almost all $x \in [0, 1]$, $\int_{A_\eta} f(W, \sigma) \le \frac{1}{\theta}\int_{A_\eta} \sigma < \frac{\varepsilon}{2}$. Similarly, $\int_{A_\eta} f(W', \sigma') < \frac{\varepsilon}{2}$. Therefore, $\|f(W, \sigma) - f(W', \sigma')\|_1 < \varepsilon$ for every $W'$ and $\sigma'$ with $\mathrm{d}_\Box(W, W') < \delta$ and $\|\sigma - \sigma'\| < \delta$. 
\end{proof}
    
We are now ready to prove the main theorem of this section.
    
\begin{proof}[Proof of Theorem \ref{thm:robustness}]
    Let $K = 1 + 2\frac{\|\sigma\|_1}{d}$. We first claim that it is enough to prove the theorem for chip configurations that satisfy $\sigma(x) \le K$, $\sigma'(x) \le K$ for almost all $x \in [0, 1]$. To see this, suppose that the theorem is known if $\sigma(x)\leq K, \sigma'(x)\leq K$ for almost all $x \in [0, 1]$, and let $\sigma$ and $\sigma'$ be arbitrary. Then $a(W, U_W \sigma) = a(W, \sigma)$ and $a(W', U_{W'} \sigma') = a(W', \sigma')$, and using Lemma \ref{l:boundedness after one firing}, $U_W \sigma(x) \le 1 + \frac{\|\sigma\|_1}{d}$ and $U_{W'} \sigma'(x) \le 1 + \frac{\|\sigma'\|_1}{d}$ for almost all $x \in [0, 1]$. If we set $\delta$ to be less than $\|\sigma\|_1$, then $K$ is an essential upper bound for $U_W \sigma$ and $U_{W'} \sigma'$, hence the weak version allows us to find $\delta'$ small enough so that the conclusion of the theorem holds for $U_W \sigma$ and $U_{W'} \sigma'$ instead of $\sigma$ and $\sigma'$ provided that $\|U_W \sigma - U_{W'} \sigma'\|_1 < \delta'$. Then one can use Proposition \ref{p:continuity: chip configs close after firing} with $\varepsilon = \delta'$ to find $\delta \le \delta'$ so that $\|\sigma - \sigma'\|_1 < \delta$ implies $\|U_W \sigma - U_{W'} \sigma'\|_1 < \delta'$, and we are done. 
    
    So now let $W$, $\sigma$, $d$ and $\varepsilon$ be fixed with $\sigma(x) \le K$ for almost all $x \in [0, 1]$, where $K = 1 + 2\frac{\|\sigma\|_1}{d}$. We need to show that one can find $\delta > 0$, $\delta \le \|\sigma\|_1$ such that for every $(W', \sigma')$ with $\mindeg(W') \ge d$, $\mathrm{d}_\Box(W, W') < \delta$, $\|\sigma - \sigma'\|_1 < \delta$ and $\sigma'(x) \le K$ for almost all $x \in [0, 1]$ we have $|a(W, \sigma) - a(W', \sigma')| < \varepsilon$.
    
    Using Propositions \ref{p:M_n-m_n bounded} and \ref{p:lim m_n/n exists}, $$\lim_{n \to \infty} \frac{m_n(W, \sigma)}{n} = \lim_{n \to \infty} \frac{M_n(W, \sigma)}{n}$$ and $$\frac{m_n(W, \sigma)}{n} \le a(W, \sigma) \le \frac{M_n(W, \sigma)}{n}\text{ for each $n$.}$$ Therefore we can choose $n$ large enough so that $$\frac{M_n(W, \sigma)}{n} - \frac{m_n(W, \sigma)}{n} < \frac{\varepsilon}{2}, \; \frac{2d + 4\|\sigma\|_1}{nd^2} < \frac{\varepsilon}{2}\text{ and } \frac{2K}{dn} < \frac{\varepsilon}{2},$$ where the latter two quantities come from Lemma \ref{l:m_n_tech} and Lemma \ref{l:M_n_tech}. Note that the choice of $n$ only depends on $\varepsilon, d, \sigma$ and $W$.
    
	We claim that it is enough to prove that if $\delta$ is small enough and $W', \sigma'$ satisfy the conditions of the theorem, moreover, $\sigma'(x)\leq K$ for almost all $x\in [0,1]$, then 
    \begin{equation}\label{eq:m_n,M_n_close}
    m_n(W',\sigma')\geq m_n(W, \sigma) - \left(\frac{2}{d} + \frac{4\|\sigma\|_1}{d^2}\right)  \text{  and } \\ M_n(W',\sigma')\leq M_n(W,\sigma) + \frac{2K}{d}. 
    \end{equation}
    
    Indeed in this case $\frac{m_n(W',\sigma')}{n}\geq \frac{m_n(W,\sigma)}{n}-\frac{\varepsilon}{2}\geq a(W,\sigma)-\varepsilon$, and
    $\frac{M_{n}(W,\sigma)}{n}\leq \frac{M_{n}(W,\sigma)}{n}+\frac{\varepsilon}{2}\leq a(W,\sigma)+\varepsilon$.
    Since $W'$ has finite diameter, $$a(W',\sigma')=\lim_{k\to\infty} \frac{m_k(W',\sigma')}{k}=\sup_{k}\frac{m_k(W',\sigma')}{k}\geq\frac{m_n(W',\sigma')}{n}\geq
	a(W,\sigma)-\varepsilon.$$ 
    
    Similarly, $$a(W',\sigma')=\lim_{k\to\infty} \frac{M_k(W',\sigma')}{k}=\inf_{k}\frac{M_k(W',\sigma')}{k}\leq\frac{M_n(W',\sigma')}{n}\leq
	a(W,\sigma)+\varepsilon.$$
    
    By Lemmas \ref{l:m_n_tech} and \ref{l:M_n_tech}, for \eqref{eq:m_n,M_n_close} to hold, it is enough to chose $\delta$ small enough so that if $\mathrm{d}_\Box(W, W') < \delta$ and $\|\sigma - \sigma'\|_1 < \delta$, then $\|\sigma'\|_1 \le 2\|\sigma\|_1$ and $\|u_n(W, \sigma) - u_n(W', \sigma')\|_1 < \frac{d}{2}$. The first condition is satisfied since $\delta \le \|\sigma\|_1$. To satisfy the second one, we apply Propositions \ref{p:continuity: chip configs close after firing} and \ref{p:continuity: firing vectors close} repeatedly. It is clearly enough to choose $\delta$ small enough so that for every $k \le n$, $k \ge 1$, $\|(u_k(W, \sigma) - u_{k - 1}(W, \sigma)) - (u_k(W', \sigma') - u_{k - 1}(W', \sigma'))\|_1 = \|f(W, U_W^{k - 1}\sigma) - f(W', U_{W'}^{k - 1}\sigma')\|_1 < \frac{d}{2n}$. 
    
    We first apply Proposition \ref{p:continuity: firing vectors close} to $(W, U_W^{n - 1}\sigma)$ and $\varepsilon = \frac{d}{2n}$ to get $\delta_n > 0$ so that $$\|U_W^{n - 1}\sigma - U_{W'}^{n - 1}\sigma'\|_1 < \delta_n \text{ and } \mathrm{d}_\Box(W, W') < \delta_n$$ imply $$\|f(W, U_W^{n - 1}\sigma) - f(W', U_{W'}^{n - 1}\sigma')\|_1 < \frac{d}{2n}.$$ Now let $\varepsilon_{n - 1} = \min\left\{\delta_n, \frac{d}{2n}\right\}$ and apply both Proposition \ref{p:continuity: chip configs close after firing} and Proposition \ref{p:continuity: firing vectors close} with $\varepsilon = \varepsilon_{n - 1}$ to get $\delta = \delta_{n - 1} > 0$, $\delta_{n - 1} \le \delta_n$ so that $$\|U_W^{n - 2}\sigma - U_{W'}^{n - 2}\sigma'\|_1 < \delta_{n - 1} \text{ and } \mathrm{d}_\Box(W, W') < \delta_{n - 1}$$ imply 
    \begin{align*}
        \|U_W^{n - 1}\sigma - U_{W'}^{n - 1}\sigma'\|_1 < \varepsilon_{n - 1} \le \delta_n \text{ and } \\ 
        \|f(W, U_W^{n - 2}\sigma) - f(W', U_{W'}^{n - 2}\sigma')\|_1 < \varepsilon_{n - 1} \le \frac{d}{2n}.
    \end{align*}
    
    By continuing downwards in a similar fashion, we can arrive at $\delta = \delta_1 > 0$ such that $\|\sigma - \sigma'\|_1 < \delta$ and $\mathrm{d}_\Box(W, W') < \delta$ imply $\|U_W^{k - 1}\sigma - U_{W'}^{k - 1} \sigma'\|_1 \le \delta_k$ and thus $\|f(W, U_W^{k - 1}\sigma) - f(W', U_{W'}^{k - 1}\sigma')\|_1 < \frac{d}{2n}$ for each $k \le n$, $k \ge 1$. Therefore $\|u_n(W, \sigma) - u_n(W', \sigma')\|_1 < \frac{d}{2}$, and the proof of the theorem is finally complete. 
\end{proof}

The next proposition shows that $(W,\sigma)$ being a smooth pair is not a very strong condition.

\begin{prop}\label{prop:bad_chip_conf_countable}
	For any chip configuration $\sigma:[0,1]\to \mathbb{R}$ and any connected graphon $W$, the set $\{\mu\in[0,1]: (W, \sigma + \mu \cdot \deg_W) \text{ is not a smooth pair}\}$ is countable. 
\end{prop}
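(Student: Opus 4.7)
The plan is to show that for each $n \in \N$, the set
\[
B_n := \bigl\{\mu \in [0,1] : \lambda(\{x : \exists k \in \N,\ U^n\sigma_\mu(x) = k\deg_W(x)\}) > 0\bigr\},
\]
where $\sigma_\mu := \sigma + \mu \cdot \deg_W$, is countable. The set appearing in the proposition is $\bigcup_{n \in \N} B_n$, so a countable union of countable sets yields the claim. Since $W$ is connected, $\deg_W > 0$ almost everywhere, so I restrict to this full-measure set throughout.

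The key idea is to analyze
\[
T_n := \{(\mu, x) \in [0,1]^2 : \exists k \in \N,\ U^n\sigma_\mu(x) = k\deg_W(x)\}
\]
as a measurable subset of $[0,1]^2$ and apply the Lusin--Novikov uniformization theorem. First, a straightforward induction on $n$, together with Fubini to control the convolution against $W$ at each step, shows that $(\mu, x) \mapsto U^n\sigma_\mu(x)$ is jointly measurable, so $T_n$ is measurable (and, by replacing $W$ and $\sigma$ with Borel representatives, Borel).

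The crucial structural step is to show that for almost every $x$, the slice $T_n^x = \{\mu : (\mu, x) \in T_n\}$ is \emph{finite}. Writing
\[
U^n\sigma_\mu(x) = \sigma(x) - u_n(W,\sigma_\mu)(x)\deg_W(x) + \mu \deg_W(x) + \int_0^1 u_n(W,\sigma_\mu)(y) W(x, y) \dy,
\]
I use that by Lemma~\ref{l:more_chips_fire_more}, $\mu \mapsto u_n(W,\sigma_\mu)(x)$ is integer-valued and non-decreasing, and is bounded by $u_n(W, \sigma_1)(x) < \infty$ for almost every $x$, so it has only finitely many jumps. On each maximal interval where this odometer at $x$ is constant, the remaining $\mu$-dependence is the term $\mu \deg_W(x)$ (strictly increasing with slope $\deg_W(x) > 0$) plus the convolution integral (non-decreasing in $\mu$ by monotonicity of $u_n(W,\sigma_\mu)$), so $\mu \mapsto U^n\sigma_\mu(x)$ is strictly increasing on each such interval. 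Hence $U^n\sigma_\mu(x) = k\deg_W(x)$ has at most one solution per interval per $k$, and because the range of $U^n\sigma_\mu(x)$ over $\mu \in [0,1]$ is bounded, only finitely many $k$ contribute.

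Finally, I invoke the Lusin--Novikov uniformization theorem: any Borel subset of $[0,1]^2$ with countable vertical slices decomposes as $\bigcup_{i \in \N} G_i$, where each $G_i = \{(\phi_i(x), x) : x \in D_i\}$ is the graph of a Borel partial function $\phi_i \colon D_i \to [0,1]$. For each such graph, the $\mu$-slice $G_i^\mu = \phi_i^{-1}(\mu)$ has positive Lebesgue measure exactly when $\mu$ is an atom of the finite Borel measure $(\phi_i)_\ast(\lambda|_{D_i})$, and any such measure has at most countably many atoms. Thus $B_n$ is contained in a countable union of countable sets and is itself countable.

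The main obstacle is essentially technical bookkeeping: verifying joint measurability, excising the null $x$-set on which $u_n(W, \sigma_1)(x)$ might be infinite so that the remaining Borel portion of $T_n$ has finite slices everywhere, and observing that this removed portion, being contained in a product of a null set with $[0,1]$, contributes measure zero to every $\mu$-slice.
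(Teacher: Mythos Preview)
Your argument is correct, and the core monotonicity insight --- that on each interval where $\mu\mapsto u_n(W,\sigma_\mu)(x)$ is constant, $\mu\mapsto U^n\sigma_\mu(x)$ is strictly increasing --- is exactly what the paper uses as well. The route you take to the conclusion, however, is genuinely different. The paper stratifies by the value $\ell$ of the odometer and by $k$, setting
\[
bad(\mu,n,\ell,k)=\{x: u_n(W,\sigma_\mu)(x)=\ell,\ U^n\sigma_\mu(x)=k\deg_W(x)\},
\]
and observes directly from the same strict monotonicity that $bad(\mu,n,\ell,k)$ and $bad(\mu',n,\ell,k)$ are almost disjoint for $\mu\neq\mu'$; then the elementary fact that pairwise almost-disjoint measurable subsets of $[0,1]$ of positive measure form a countable family finishes the proof without any descriptive set theory. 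Your approach instead packages the same pointwise information as ``finite vertical slices'' and invokes Lusin--Novikov plus an atom-counting argument on the pushforward measures. This is heavier machinery, but it has the virtue of making the structure of $T_n$ explicit as a countable union of Borel graphs; the paper's argument is more elementary but slightly more ad hoc in its indexing by $(\ell,k)$. One small point worth noting: the monotonicity of $\mu\mapsto u_n(W,\sigma_\mu)(x)$ for \emph{all} $\mu$ simultaneously (not just pairwise a.e.) does require the inductive null-set bookkeeping you allude to at the end, and that step deserves to be written out rather than deferred.
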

\begin{proof}
	Fix the chip configuration $\sigma$ and for any $\mu \in [0, 1]$, let us use the notation $\sigma_\mu = \sigma + \mu \cdot \deg_W$. For a $\mu \in [0, 1]$ and $n,\ell, k\in\mathbb{N}$, let
	\begin{align*}
	bad(\mu,n,\ell, k) = \{x \in [0, 1]: & \; u_n(W, \sigma_\mu)(x) = \ell, \; U^n\sigma_\mu(x)=k \cdot \deg_W(x)\}.
	\end{align*}
	
	It is clear from the definition that $bad(\mu, n, \ell, k)$ is measurable for each $\mu \in [0, 1]$ and $n, \ell, k \in \N$. Let us fix $n, \ell, k \in \N$, we now show that if $\mu' \neq \mu$ then $\lambda(bad(\mu, n, \ell, k) \cap bad(\mu', n, \ell, k))  = 0$. 
    Suppose that $\mu' > \mu$, then by Lemma \ref{l:more_chips_fire_more}, $u_n(W, \sigma_{\mu'})(x) \ge u_n(W, \sigma_{\mu})(x)$ for almost all $x \in [0, 1]$. Then for an $x \in bad(\mu, n, \ell, k) \cap bad(\mu', n, \ell, k)$ with $\deg_W(x) > 0$, 
    \begin{align*}
        U^n &\sigma_{\mu'}(x) \\ 
        &= \sigma_{\mu'}(x) - u_n(W, \sigma_{\mu'})(x) \cdot \deg_W(x) + \int_0^1 u_n(W, \sigma_{\mu'})(y) W(x, y) \dy \\
        &= \sigma_{\mu'}(x) - u_n(W, \sigma_{\mu})(x) \cdot \deg_W(x) + \int_0^1 u_n(W, \sigma_{\mu'})(y) W(x, y) \dy \\
        &> \sigma_{\mu}(x) - u_n(W, \sigma_{\mu})(x) \cdot \deg_W(x) + \int_0^1 u_n(W, \sigma_{\mu})(y) W(x, y) \dy \\
        &= U^n\sigma_\mu(x),
    \end{align*}
    contradicting the fact that $U^n\sigma_{\mu'}(x) = U^n\sigma_\mu(x) = k\cdot \deg_W(x)$. Hence indeed, almost all $x \in [0, 1]$ cannot be in both $bad(\mu, n, \ell, k)$ and $bad(\mu', n, \ell, k)$. 
    
    If for fixed $n, \ell, k \in \N$ uncountably many $\mu$ exists with $\lambda(bad(\mu, n, \ell, k)) > 0$, then for infinitely many of those, $\lambda(bad(\mu, n, \ell, k)) > \varepsilon$ for some $\varepsilon > 0$. By taking at least $\frac{1}{\varepsilon} + 1$ sets of those, two will intersect in a set of positive measure, a contradiction. We conclude that for fixed $n, \ell, k \in \N$, only countably many $\mu$ exists with the property that $bad(\mu, n, \ell, k)$ is of positive measure. Therefore all, but countably many $\mu$ has the property that $\lambda(bad(\mu, n, \ell, k)) = 0$ for every $n, \ell, k \in \N$, hence for all, but countably many $\mu \in [0, 1]$, $(W, \sigma_\mu)$ is a smooth pair.
\end{proof}

\section{The Devil's staircase phenomenon}

In this section we use our previous results to prove the Devil's staircase phenomenon in some situations. First, we prove that under mild conditions, the activity diagram of a chip configuration on an Erd\H os--R\'enyi random graph is close to a Devil's staircase with high probability. Then we show a one-parameter family of random chip configurations on Erd\H os--R\'enyi random graphs that exhibit the Devil's staircase phenomenon with high probability.
Let $C_p$ denote the graphon with $C_p(x,y)=p$ for all $x,y\in [0,1]$. 

\subsection{A sufficient condition for the Devil's staircase phenomenon on $C_p$}

Here we give a sufficient condition for the activity diagram of a chip configuration on $C_p$ to be a Devil's staircase. We deduce the sufficient condition from the analogous theorem of Levine \cite{Lionel_parallel}, which concerns $C_1$. Let us first note the relationship of the activity on $C_1$ and on $C_p$.

\begin{prop}\label{prop:relationship_of_C_1_and_C_p}
For any $\sigma$ and $0 < p\leq 1$, $a(C_p,\sigma)=a(C_1,\frac{\sigma}{p})$.  
\end{prop}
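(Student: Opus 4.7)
The plan is to observe that on the constant graphons $C_p$ and $C_1$, the parallel chip-firing dynamics at $\sigma$ and at $\sigma/p$ respectively differ only by the global scaling factor $p$, so that the odometers coincide pointwise and the activities must agree.

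First I would write out the relevant quantities. Since $\deg_{C_p}(x) = p$ and $\deg_{C_1}(x) = 1$ for every $x$, the firing vector on $C_p$ at $\sigma$ is $f(C_p,\sigma)(x) = \lfloor \sigma(x)/p \rfloor$, which equals $f(C_1, \sigma/p)(x) = \lfloor (\sigma(x)/p)/1 \rfloor$. Writing $f$ for this common firing vector, a direct calculation from the update rule gives
\begin{align*}
    U_{C_p}\sigma(x) &= \sigma(x) - p\,f(x) + \int_0^1 f(y)\,p\,\dy, \\
    U_{C_1}(\sigma/p)(x) &= \sigma(x)/p - f(x) + \int_0^1 f(y)\,\dy,
\end{align*}
so that $U_{C_p}\sigma(x) = p\cdot U_{C_1}(\sigma/p)(x)$ for almost all $x$.

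Next, I would prove by induction on $n$ that $U_{C_p}^n\sigma = p \cdot U_{C_1}^n(\sigma/p)$ almost everywhere. The base case $n=0$ is trivial. For the inductive step, by the induction hypothesis the rescaled configurations have identical firing vectors at step $n$, namely $f(C_p, U_{C_p}^n\sigma)(x) = \lfloor U_{C_p}^n\sigma(x)/p \rfloor = \lfloor U_{C_1}^n(\sigma/p)(x) \rfloor = f(C_1, U_{C_1}^n(\sigma/p))(x)$, and the same calculation as above promotes this to the desired identity for $n+1$.

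Since the firing vectors coincide at every step, the odometers $u_n(C_p,\sigma)(x) = u_n(C_1,\sigma/p)(x)$ agree for all $n$ and almost all $x$. Both $C_p$ (for $p>0$) and $C_1$ are connected and have $\mindeg$ equal to $p$ and $1$ respectively, so they satisfy the finite diameter condition by Theorem \ref{t:finite diam equiv conditions}, and hence the activities $a(C_p,\sigma)$ and $a(C_1,\sigma/p)$ both exist by Theorem \ref{t:activity_exists_with_FDC}. Dividing the common odometer by $n$ and passing to the limit yields the equality $a(C_p,\sigma) = a(C_1,\sigma/p)$. There is no real obstacle in this proof; the only thing to keep in mind is that $p>0$ is needed for $\sigma/p$ to be defined and for the finite diameter argument to apply.
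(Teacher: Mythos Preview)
Your proof is correct and follows essentially the same approach as the paper: both establish by induction that $U_{C_p}^n\sigma = p\cdot U_{C_1}^n(\sigma/p)$, from which equality of the firing vectors and hence of the odometers at every step follows immediately. Your version is somewhat more explicit than the paper's (which simply calls the induction ``straightforward''), and you additionally invoke Theorems \ref{t:finite diam equiv conditions} and \ref{t:activity_exists_with_FDC} to confirm that the activities exist before passing to the limit, a point the paper leaves implicit.
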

\begin{proof}
It is enough to show that for each $n$, $\frac{1}{p}U_{C_p}^n(\sigma)=U_{C_1}^n (\frac{\sigma}{p})$. This implies that 
$\lfloor \frac{1}{p}U_{C_p}^n(\sigma)(x) \rfloor = \lfloor U_{C_1}^n(\frac{\sigma}{p})(x)\rfloor$ for each $n$, hence the odometers are the same.

Proving $\frac{1}{p}U_{C_p}^n(\sigma)=U_{C_1}^n (\frac{\sigma}{p})$ is straightforward by induction on $n$.
\end{proof}

Now we can use the results of \cite{Lionel_parallel} that gives a sufficient condition for the activity diagram of a chip configuration on the graphon $C_1$ to be a Devil's staircase. (We note that \cite{Lionel_parallel} uses a different terminology, in particular, it does not refer to graphons.) 

For \cite{Lionel_parallel}, a \emph{generalized chip configuration} is a measurable function $\sigma:[0,1]\to [0,\infty)$ (hence every chip configuration on a graphon as defined in the current paper is a generalized chip configuration as defined in \cite{Lionel_parallel}). The update operator $U$ defined in equation (8) of \cite{Lionel_parallel} coincides with the parallel update rule for the graphon $C_1$ if $\sigma(x)< 2$ for each $x\in[0,1]$. In \cite{Lionel_parallel}, the activity of a generalized chip configuration on $C_1$ is defined as $\lim_{n\to \infty} \frac{\beta_n(\sigma)}{n}$ (if it exists), where $\beta_n(\sigma)=\sum_{i=0}^{n-1}\lambda(\{x: U^i\sigma(x)\geq 1\})$. One can make the obvious generalization and for an arbitrary graphon $W$ and chip configuration $\sigma$. Set 
$$\beta_n(W,\sigma)=\sum_{i=0}^{n-1}\lambda(\{x: U^i\sigma(x)\geq \deg_W(x)\}).$$

\begin{prop}
  \label{p:two activity def}
  If $W$ has finite diameter and $\sigma(x) < 2\deg_W(x)$ for almost all $x\in [0,1]$, then the two definitions of the activity coincide, that is, $a(W,\sigma) = \lim_{n\to \infty} \frac{\beta_n(W,\sigma)}{n}$.
\end{prop}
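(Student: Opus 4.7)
The plan is to exploit the hypothesis $\sigma(x) < 2\deg_W(x)$ to show that every firing vector encountered in the dynamics takes values only in $\{0,1\}$. Once this is known, $\beta_n(W,\sigma)$ becomes literally the integral of the odometer $u_n(W,\sigma)$, and the proposition follows from bounded convergence combined with Theorem~\ref{t:activity_exists_with_FDC}.

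First, I would prove by induction on $i$ that $U^i\sigma(x) < 2\deg_W(x)$ for almost every $x$. Since $W$ has finite diameter, Theorem~\ref{t:finite diam equiv conditions} gives $\mindeg(W) > 0$, so $\deg_W(x) > 0$ almost everywhere, and $f(W,U^i\sigma)(x) = \lfloor U^i\sigma(x)/\deg_W(x)\rfloor \in \{0,1\}$. The inductive step is a two-case check: if $f(x) = 0$ then $U^i\sigma(x) < \deg_W(x)$ and the inflow $\int_0^1 f(y) W(x,y)\dy$ is bounded by $\deg_W(x)$; if $f(x) = 1$ then $U^i\sigma(x) - \deg_W(x) < \deg_W(x)$ and again the inflow is bounded by $\deg_W(x)$. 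In both cases $U^{i+1}\sigma(x) < 2\deg_W(x)$.

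Next, the invariance gives $f(W, U^i\sigma)(x) = 1$ precisely on $\{x : U^i\sigma(x) \ge \deg_W(x)\}$ (up to a null set), so Fubini/linearity yields
$$
\beta_n(W,\sigma) = \sum_{i=0}^{n-1} \int_0^1 f(W, U^i\sigma)(x)\dx = \int_0^1 u_n(W,\sigma)(x)\dx.
$$
Since each firing vector is bounded by $1$, we have $u_n(x)/n \le 1$ for every $n$ and almost every $x$. By Theorem~\ref{t:activity_exists_with_FDC}, $u_n(x)/n \to a(W,\sigma)$ almost everywhere. Applying the bounded convergence theorem to $u_n(x)/n$ on $[0,1]$ gives
$$
\frac{\beta_n(W,\sigma)}{n} = \int_0^1 \frac{u_n(W,\sigma)(x)}{n}\dx \longrightarrow a(W,\sigma),
$$
which is exactly the claim.

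The only non-mechanical step is the invariance in the first paragraph, and even that is a routine case analysis once one notes the degree lower bound provided by finite diameter. The finite-diameter assumption is otherwise used only through Theorem~\ref{t:activity_exists_with_FDC}, which supplies the pointwise convergence needed for bounded convergence; the assumption $\sigma(x) < 2\deg_W(x)$ is what converts $\beta_n$ into $\int u_n$.
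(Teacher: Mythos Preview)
Your proof is correct and follows essentially the same route as the paper: both establish the invariance $U^i\sigma(x) < 2\deg_W(x)$, deduce that every firing vector is $\{0,1\}$-valued so that $\beta_n = \int_0^1 u_n\,dx$, and then pass to the limit using Theorem~\ref{t:activity_exists_with_FDC}. The only cosmetic difference is in the final step: you invoke the bounded convergence theorem directly, while the paper writes out an explicit $\varepsilon$-argument (splitting $[0,1]$ into a large set where $u_n/n$ is $\varepsilon$-close to $a(W,\sigma)$ and its small complement), which amounts to the same thing.
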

\begin{proof}
  By Theorem \ref{t:activity_exists_with_FDC}, if $W$ has finite diameter, then there exist $a(W,\sigma)$ such that $\lim_{n\to\infty}\frac{u_n(x)}{n}=a(W,\sigma)$ for almost all $x\in[0,1]$. Notice that $\sigma(x)< 2\deg_W(x)$ for almost all $x\in [0,1]$ implies $U^i\sigma(x)< 2\deg_W(x)$ for all $i \in \N$ and almost all $x\in [0,1]$, and hence $u_n(x)=|\{i\in \mathbb{N}:0\leq i < n, U^i\sigma(x)\geq \deg_W(x)\}|$ for almost all $x\in[0,1]$. Therefore  
  $$\beta_n(W,\sigma)=\int_0^1 u_n(x)\dx.$$

As $\lim_{n\to\infty}\frac{u_n(x)}{n}=a(W,\sigma)$ for almost all $x\in[0,1]$, for any $\varepsilon$, we can choose $n_0$ such that for any $n\geq n_0$, for $A_n=\{x\in[0,1]: |\frac{u_n(x)}{n}-a(W,\sigma)|\leq \varepsilon\}$, we have $\lambda(A_n)\geq 1-\varepsilon$.

Then for $n\geq n_0$, $$\frac{\beta_n(W,\sigma)}{n}=\int_0^1 \frac{u_n(x)}{n}\dx\leq \int_{A_n} (a(W,\sigma) + \varepsilon) \dx + \int_{[0,1]\setminus A_n} 1 \dx\leq a(W,\sigma) + 2\varepsilon.$$ Hence $\lim_{n\to \infty}\frac{\beta_n(W,\sigma)}{n}\leq a(W,\sigma)$.

Similarly, for $n\geq n_0$, $$\frac{\beta_n(W,\sigma)}{n}=\int_0^1 \frac{u_n(x)}{n}\dx\geq \int_{A_n} (a(W,\sigma) - \varepsilon) \dx \geq (1-\varepsilon) (a(W,\sigma)-\varepsilon).$$ Hence $\lim_{n\to \infty}\frac{\beta_n(W,\sigma)}{n}\geq a(W,\sigma)$, and the proof is complete.
\end{proof}

Now we collect the results from \cite{Lionel_parallel} that we need. The statements and arguments that follow are all present in \cite{Lionel_parallel}, but not everything is in a form convenient for us, so we repeat some of the arguments of that paper. We call a chip configuration $\sigma$ on $C_1$ \emph{preconfined} if $\sigma(x) < 2$ for all $x \in [0, 1]$. To each preconfined $\sigma$, let us define the function $f_\sigma : \R \to \R$ the following way. For $x \in [0, 1]$, let 
\begin{equation}
  \label{e:f def}
    f_\sigma(x) = \lambda(\{v : \sigma(v) \ge 1\}) + \lambda(\{v : \sigma(v) \in [1 - x, 1) \cup [2 - x, 2)\}).
\end{equation}

It is easy to check that $f$ is an increasing function with $f_\sigma(1) = f_\sigma(0) + 1$. Hence there is a unique extension of $f_\sigma$ to $\R$ as an increasing function, which we also denote by $f_\sigma$, that satisfies $f_\sigma(x + 1) = f_\sigma(x) + 1$. If $f_\sigma$ is continuous then it has a well-defined \emph{Poincar\'e rotation number} 
$$
  \rho(f_\sigma) = \lim_{n \to \infty} \frac{f_\sigma^n(x)}{n},
$$ 
which is independent of $x$, see \cite{Lionel_parallel}. 

\begin{lemma}[{\cite[Lemma 6]{Lionel_parallel}}]
  If $\sigma$ is preconfined and $f_\sigma$ is continuous then $a(C_1,\sigma) = \rho(f_\sigma)$. 
\end{lemma}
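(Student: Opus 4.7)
The plan is to deduce the lemma from Proposition \ref{p:two activity def} by matching the iterates of $f_\sigma$ to the cumulative activity of $\sigma$. Since $C_1$ is trivially connected with $\deg_{C_1}(x)=1$, it has finite diameter by Theorem \ref{t:finite diam equiv conditions}, and the preconfinedness assumption gives $\sigma(x)<2=2\deg_{C_1}(x)$. Proposition \ref{p:two activity def} therefore yields $a(C_1,\sigma)=\lim_{n\to\infty}\beta_n(C_1,\sigma)/n$, where, setting $\alpha_k=\lambda(\{v:U^k\sigma(v)\ge 1\})$, we have $\beta_n=\sum_{k=0}^{n-1}\alpha_k$. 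Since $f_\sigma$ is continuous and satisfies $f_\sigma(x+1)=f_\sigma(x)+1$, it is a lift of a degree-$1$ circle endomorphism, so $\rho(f_\sigma)=\lim_{n\to\infty}f_\sigma^n(0)/n$ exists. Thus it suffices to prove the identity
\[
\beta_n=f_\sigma^n(0) \quad \text{for all } n\ge 0.
\]

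The starting observation is that on $C_1$ the update rule takes the form $U\sigma(v)=\{\sigma(v)\}+\alpha_0$, where $\{\cdot\}$ denotes fractional part. Since $\{\sigma(v)\}\in[0,1)$ and $\alpha_0\in[0,1]$, preconfinedness is preserved. Applying this inductively and using $\{\{a\}+b\}=\{a+b\}$, one gets $\{U^n\sigma(v)\}=\{\sigma(v)+\beta_n\}$, and the fraction of firing sites at step $n$ becomes
\[
\alpha_n=\lambda\bigl(\{v:\{\sigma(v)+\beta_{n-1}\}\ge 1-\alpha_{n-1}\}\bigr) \quad (n\ge 1).
\]
I will prove by induction on $n$ that $\alpha_n=h(\beta_n)$, where $h(x)=f_\sigma(x)-x$ is $1$-periodic; this gives $\beta_{n+1}=\beta_n+h(\beta_n)=f_\sigma(\beta_n)$, hence $\beta_n=f_\sigma^n(0)$. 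The base $n=0$ is just $h(0)=f_\sigma(0)=\alpha_0$.

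For the inductive step, since both the claimed identity and $\alpha_n$ are $1$-periodic in $\beta_{n-1}$, reduce to $\beta:=\beta_{n-1}\in[0,1)$ with $\alpha:=\alpha_{n-1}$. Let $\mu$ denote the distribution of $\{\sigma(v)\}$ on $[0,1)$; note $g(x)=\mu([1-x,1))$ for $x\in[0,1]$. Unpacking the condition $\{y_0+\beta\}\in[1-\alpha,1)$ by intersecting $y_0+\beta\in[1-\alpha,1)\cup[2-\alpha,2)$ with $[\beta,1+\beta)$ splits into the cases $\beta+\alpha\le 1$ and $\beta+\alpha>1$. In each case a direct computation expresses $\alpha_n$ as a combination of $\mu$ of explicit arcs, and matching with the right-hand side $h(\beta+\alpha)=\alpha_0+g((\beta+\alpha)\bmod 1)-(\beta+\alpha\bmod 1)$ reduces in both cases to the single identity $\alpha=\alpha_0+g(\beta)-\beta=h(\beta)$, which is precisely the inductive hypothesis.

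Once $\beta_n=f_\sigma^n(0)$ is established, the conclusion follows at once: $a(C_1,\sigma)=\lim \beta_n/n=\lim f_\sigma^n(0)/n=\rho(f_\sigma)$. The main difficulty in the argument is the case analysis for the inductive step, which must be handled while tracking two separate wrap-arounds (of $y_0+\beta$ past $1$, and of $\beta+\alpha$ past $1$); once the periodic extension of $h$ and the definition of $g$ are aligned, however, the computation collapses to the inductive hypothesis. Continuity of $f_\sigma$ is used only at the very end, to ensure that the rotation number exists as an ordinary limit independent of the base point.
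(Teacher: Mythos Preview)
The paper does not give its own proof of this lemma; it is quoted verbatim from \cite[Lemma~6]{Lionel_parallel} and used as a black box. Your argument is correct and supplies a self-contained proof inside the framework of the present paper, using Proposition~\ref{p:two activity def} to identify $a(C_1,\sigma)$ with $\lim_n\beta_n/n$ and then establishing the exact identity $\beta_n=f_\sigma^{\,n}(0)$ by induction. I checked both cases of your inductive step: with $\mu$ the law of $\{\sigma(v)\}$ and $g(x)=\mu([1-x,1))$, one has $f_\sigma(x)=\alpha_0+g(x)$ on $[0,1]$, and in the case $\alpha+\beta\le 1$ the computation gives $\alpha_n=g(\alpha+\beta)-g(\beta)$, while in the case $\alpha+\beta>1$ it gives $\alpha_n=1-g(\beta)+g(\alpha+\beta-1)$; in both cases equating with $h(\alpha+\beta)$ reduces exactly to $\alpha=h(\beta)$, the inductive hypothesis.

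This is in fact the same mechanism as in Levine's original proof: the heart of \cite[Lemma~6]{Lionel_parallel} is precisely the recursion $\beta_{n+1}=f_\sigma(\beta_n)$, derived from the observation that on $C_1$ the update is $U\sigma(v)=\{\sigma(v)\}+\alpha_0$. So your route is not different from the source, only from the present paper, which simply cites the result. One small remark: your final sentence slightly overstates the role of continuity. The identity $\beta_n=f_\sigma^{\,n}(0)$ holds regardless, and the existence of $\lim_n\beta_n/n$ already follows from Proposition~\ref{p:two activity def}; continuity (together with monotonicity and the degree-one property) is what the paper invokes to speak of $\rho(f_\sigma)$ as a well-defined rotation number independent of the base point, so that the quantity you computed at $0$ deserves that name.
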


It is easy to check (and the computation can also be found in \cite{Lionel_parallel}) that if $y \in \R$ is given such that $\sigma + y$ is also a preconfined chip configuration on $C_1$ (that is, $0 \le \sigma + y < 2$), then $f_{\sigma + y}(x - y) = f_\sigma(x)$. As stated also in \cite{Lionel_parallel}, conjugating by the homeomorphism $R_y : \R \to \R$ defined by $R_y(x) = x + y$ does not change the rotation number. Then, for any $y \in \R$, $\rho(f_{\sigma + y}) = \rho(R_{y}(f_{\sigma + y}(R_{-y})))$. The function inside is $x \mapsto R_{y}(f_{\sigma + y}(x - y)) = R_{y}(f_{\sigma}(x))$, hence, using also the previous lemma, we have the following. 

\begin{lemma}
  \label{l:a(sigma + y) = rho(R_y circ f_sigma)}
  If $\sigma$ is a chip configuration on $C_1$, $y \in \R$ such that $\sigma$ and $\sigma + y$ are preconfined, and both $f_\sigma$ and $f_{\sigma+y}$ are continuous, then $a(C_1,\sigma + y) = \rho(R_{y} \circ f_{\sigma})$. 
\end{lemma}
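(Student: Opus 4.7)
The statement is essentially a direct consequence of the two observations that the text has just established: (a) the identity $f_{\sigma+y}(x-y) = f_\sigma(x)$ relating the two $f$-functions, and (b) the invariance of the Poincar\'e rotation number under topological conjugation. So my plan is to chain these together with the preceding Lemma 6 of \cite{Lionel_parallel}.

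The plan is as follows. Since $\sigma+y$ is preconfined and $f_{\sigma+y}$ is continuous by hypothesis, the previously quoted lemma gives $a(C_1,\sigma+y) = \rho(f_{\sigma+y})$. It therefore remains to show $\rho(f_{\sigma+y}) = \rho(R_y \circ f_\sigma)$. Rewriting $f_{\sigma+y}(x-y)=f_\sigma(x)$ as $f_{\sigma+y}(u) = f_\sigma(u+y) = f_\sigma(R_y(u))$ shows $f_{\sigma+y} = f_\sigma \circ R_y$, so
\begin{equation*}
  R_y \circ f_{\sigma+y} \circ R_{-y} = R_y \circ f_\sigma \circ R_y \circ R_{-y} = R_y \circ f_\sigma.
\end{equation*}
Since $R_y$ is a homeomorphism of $\R$ and the rotation number of a degree-one lift is invariant under conjugation by such a homeomorphism, $\rho(f_{\sigma+y}) = \rho(R_y \circ f_{\sigma+y} \circ R_{-y}) = \rho(R_y \circ f_\sigma)$, which combined with the first step gives the claim.

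There is no real obstacle: all the heavy lifting (continuity hypothesis, the algebraic identity relating $f_\sigma$ and $f_{\sigma+y}$, and the conjugation-invariance of $\rho$) is either assumed or has just been recorded. The only small sanity check I would include is that $R_y \circ f_\sigma$ is again a continuous, strictly increasing lift commuting with integer translation (it commutes with $R_1$ because $f_\sigma$ does and $R_y$ commutes with $R_1$), so that its rotation number is well-defined in the sense used in \cite{Lionel_parallel}.
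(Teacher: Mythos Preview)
Your proof is correct and follows essentially the same route as the paper: apply the preceding lemma to get $a(C_1,\sigma+y)=\rho(f_{\sigma+y})$, then use the identity $f_{\sigma+y}(x-y)=f_\sigma(x)$ together with conjugation-invariance of the rotation number to identify $\rho(f_{\sigma+y})$ with $\rho(R_y\circ f_\sigma)$. The only cosmetic difference is that the paper computes $R_y\circ f_{\sigma+y}\circ R_{-y}$ directly, whereas you first rewrite $f_{\sigma+y}=f_\sigma\circ R_y$ and then substitute; the computations are the same.
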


Let $\sigma$ be a stable chip configuration on $C_1$, that is, $\sigma(v) < 1$ for almost all $v \in [0, 1]$. Note that for $y \in [0, 1]$, $\sigma$ and $\sigma + y$ are both preconfined. Now define $\Phi_{\sigma,y}:\mathbb{R}\to\mathbb{R}$ by
$$\Phi_{\sigma,y}(x)=\lceil x\rceil - \lambda(\{v\in [0,1]: \sigma(v)<\lceil x \rceil - x\}) + y.$$

It is easy to check that $\Phi_{\sigma, y}$ is continuous if $\lambda(\{v : \sigma(v) = x\}) = 0$ for all $x \in [0, 1]$, and also that $\Phi_{\sigma, y}(x) = R_y(f_{\sigma}(x))$, hence 
$$s(C_1, \sigma)(y) = a(C_1, \sigma + y) = \rho(\Phi_{\sigma, y}),$$
where $s$ is the activity diagram as defined in Section \ref{ss:parallel cf on graphons}.
Since $\Phi_{\sigma,y}(x+1)=\Phi_{\sigma,y}(x)+1$ for every $x \in \R$, it makes sense to denote by $\overline{\Phi}_{\sigma,y}$ the corresponding map from $\R/\Z = S^1$ to $S^1$. 

\begin{thm}[{\cite[Proposition 10]{Lionel_parallel}}]
If $\sigma(v) < 1$ for almost all $v\in[0,1]$, $\lambda(\{v: \sigma(v)=c\})=0$ for each $c\in \mathbb{R}$ and $\overline{\Phi}_{\sigma,y}^q\neq Id$ for any $q \in \N \setminus \{0\}$, then $s(C_1,\sigma)$ is a Devil's staircase. Moreover, if $\alpha$ is irrational, then $s(C_1,\sigma)^{-1}(\alpha)$ is a point, and if $y$ is rational then $s(C_1,\sigma)^{-1}(y)$ is an interval of positive length.
\end{thm}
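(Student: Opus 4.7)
The plan is to reduce the statement to the classical theory of rotation numbers of orientation-preserving circle homeomorphisms, exploiting the identification $s(C_1,\sigma)(y) = \rho(\Phi_{\sigma,y})$ established in the text above.

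First I would verify that under the hypotheses each $\Phi_{\sigma,y}$ descends to an orientation-preserving homeomorphism $\overline{\Phi}_{\sigma,y}$ of $S^1 = \mathbb{R}/\mathbb{Z}$. Monotonicity of $x \mapsto \Phi_{\sigma,y}(x)$ is immediate from the definition; the assumption $\lambda(\{v : \sigma(v) = c\}) = 0$ makes the distribution function $x \mapsto \lambda(\{v : \sigma(v) < x\})$ continuous, which yields continuity of $\Phi_{\sigma,y}$, and strict monotonicity also follows because $\sigma$ has no atoms. Combined with $\Phi_{\sigma,y}(x+1) = \Phi_{\sigma,y}(x) + 1$, this makes $\overline{\Phi}_{\sigma,y}$ a genuine circle homeomorphism, and since $\Phi_{\sigma,y}(x) = \Phi_{\sigma,0}(x) + y$, the family is jointly continuous in $(x,y)$ and strictly increasing in $y$ uniformly in $x$.

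Next I would invoke the standard qualitative properties of the Poincar\'e rotation number: $\rho$ is continuous in the $C^0$ topology and monotone under pointwise ordering of lifts. It follows that $y \mapsto \rho(\Phi_{\sigma,y})$ is continuous and nondecreasing on $[0,1]$. Since $\sigma$ is stable we have $s(C_1,\sigma)(0) = 0$, and since adding the constant $1$ to $\sigma$ forces every point to fire in every step, $s(C_1,\sigma)(1) = 1$, so by the intermediate value theorem every value in $[0,1]$ is attained.

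The core step is the plateau analysis. Fix a rational $p/q \in [0,1]$. By the Poincar\'e classification of circle homeomorphisms, the preimage $\{y : \rho(\Phi_{\sigma,y}) = p/q\}$ is a closed (possibly degenerate) interval $[y^-, y^+]$, and the standard Arnold-tongue rigidity argument from the theory of monotonically translated circle homeomorphisms shows that this interval collapses to a single point precisely when $\overline{\Phi}_{\sigma,y_0}^q = \id$ at the unique $y_0$ in question; otherwise the semi-stable $q$-periodic orbit structure of $\overline{\Phi}_{\sigma,y_0}^q$ persists under small perturbations of $y$ in both directions, so the plateau has positive length. The hypothesis $\overline{\Phi}_{\sigma,y}^q \neq \id$ rules out the degenerate case for every $q \ge 1$, so every rational level set is a nondegenerate closed interval. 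Density of $\mathbb{Q} \cap [0,1]$ then gives the dense open family of plateaus required for the devil's staircase. Finally, if $\alpha \in [0,1]$ were irrational and $\rho^{-1}(\alpha)$ contained an interval $[y^-, y^+]$ with $y^- < y^+$, then $\Phi_{\sigma,y^+}(x) = \Phi_{\sigma,y^-}(x) + (y^+ - y^-) > \Phi_{\sigma,y^-}(x)$ for every $x$, while both maps are semiconjugate to the same irrational rotation $R_\alpha$ on $S^1$; pulling the semiconjugacies back produces the contradiction that $R_\alpha$ is strictly pointwise less than itself. The main technical obstacle is the rigidity dichotomy that identifies the collapsing-plateau condition with $\overline{\Phi}_{\sigma,y}^q = \id$, which is exactly where the third hypothesis enters; everything else is a formal consequence of continuity, monotonicity and translation-covariance of the family $y \mapsto \Phi_{\sigma,y}$.
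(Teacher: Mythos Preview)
The paper does not prove this theorem: it is quoted directly from \cite[Proposition~10]{Lionel_parallel} and used as a black box, so there is no proof in the present paper to compare against. The surrounding discussion makes clear that everything here is imported from Levine's paper, which already carries out the rotation-number analysis you outline.

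Your sketch is the standard route and is almost certainly what Levine does: identify $s(C_1,\sigma)(y)$ with $\rho(\Phi_{\sigma,y})$, use continuity and monotonicity of $\rho$ for the family $\Phi_{\sigma,y}=\Phi_{\sigma,0}+y$, and invoke the dichotomy that the $p/q$-plateau collapses precisely when $\overline{\Phi}_{\sigma,y}^q=\id$. One genuine slip: atomlessness of $\sigma$ gives continuity of $\Phi_{\sigma,y}$ but \emph{not} strict monotonicity---for that you would need the distribution of $\sigma$ to have full support on $[0,1)$, which is not assumed. So $\overline{\Phi}_{\sigma,y}$ is only a continuous nondecreasing degree-one circle map, not necessarily a homeomorphism. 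This does not break the argument (the rotation number exists, is $C^0$-continuous, and the plateau dichotomy $\Phi_{\sigma,y}^q=\id+p \Leftrightarrow y^-=y^+$ still holds for such maps), but you should not claim injectivity. Your treatment of the irrational case via semiconjugacy is also looser than the rest; a cleaner way is to observe directly that if $\rho(\Phi_{\sigma,y_0})=\alpha$ and $\Phi_{\sigma,y_0}^q\neq \id+p$ for all $p,q$, then for every rational $p/q$ there is a point $x$ with strict inequality $\Phi_{\sigma,y_0}^q(x)\neq x+p$, and one tracks these to show $\rho$ moves off $\alpha$ under any perturbation of $y$.
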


Applying Proposition \ref{prop:relationship_of_C_1_and_C_p}, we get the following corollary for $C_p$. 

\begin{thm}\label{thm:C_p_Devils_staircase}
For some $0<p\leq 1$,
if $\sigma(v) < p$ for almost all $v\in[0,1]$, $\lambda(\{v: \sigma(v)=c\})=0$ for each $c\in \mathbb{R}$ and $\overline{\Phi}_{\frac{1}{p}\sigma,y}^q\neq Id$ for any $q \in \N \setminus \{0\}$, then $s(C_p,\sigma)$ is a Devil's staircase. Moreover, if $\alpha$ is irrational, then $s(C_p,\sigma)^{-1}(\alpha)$ is a point, and if $y$ is rational then $s(C_p,\sigma)^{-1}(y)$ is an interval of positive length.
\end{thm}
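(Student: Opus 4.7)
The plan is to reduce the statement to the Devil's staircase theorem quoted immediately before it (for $C_1$) by using the scaling identity from Proposition~\ref{prop:relationship_of_C_1_and_C_p}. Since $\deg_{C_p}(x) \equiv p$, the definition of the activity diagram yields, for every $y \in [0,1]$,
$$s(C_p,\sigma)(y) = a(C_p, \sigma + y \cdot \deg_{C_p}) = a(C_p, \sigma + yp) = a\!\left(C_1, \tfrac{\sigma}{p} + y\right) = s\!\left(C_1, \tfrac{\sigma}{p}\right)(y),$$
where the third equality is Proposition~\ref{prop:relationship_of_C_1_and_C_p} applied to the chip configuration $\sigma + yp$, and the fourth uses that $\deg_{C_1} \equiv 1$. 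Thus the two activity diagrams coincide pointwise, and it suffices to prove that $s(C_1, \sigma/p)$ is a Devil's staircase with the prescribed structure of the preimages of rational and irrational numbers.

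Next I would verify that $\sigma/p$ satisfies the three hypotheses of the cited $C_1$-version. The assumption $\sigma(v) < p$ almost everywhere translates directly to $(\sigma/p)(v) < 1$ almost everywhere. The non-atom condition $\lambda(\{v : \sigma(v) = c\}) = 0$ for every $c \in \mathbb{R}$ is preserved under dividing by $p$: indeed $\{v : (\sigma/p)(v) = c\} = \{v : \sigma(v) = cp\}$, which has measure zero. Finally, the dynamical hypothesis $\overline{\Phi}_{\sigma/p, y}^q \neq \mathrm{Id}$ for every $q \in \mathbb{N} \setminus \{0\}$ is exactly the third hypothesis in the statement being proved. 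Thus the cited theorem applies and gives that $s(C_1, \sigma/p)$ is a Devil's staircase, with singleton preimages above irrationals and positive-length interval preimages above rationals.

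The last step is to transport these conclusions back along the identity $s(C_p, \sigma) = s(C_1, \sigma/p)$ obtained in the first paragraph; this is immediate, since the property of being a Devil's staircase and the structure of the level sets $s^{-1}(\alpha)$ depend only on the pointwise values of the function. There is essentially no hard step in this argument; the only point to be careful about is that the constant $y$-shift on $C_p$ (by $y \cdot \deg_{C_p} = yp$) rescales cleanly under division by $p$ to the constant $y$-shift on $C_1$ (by $y \cdot \deg_{C_1} = y$), which is precisely what makes the reduction work for the entire activity diagram at once rather than just for a single value of $y$.
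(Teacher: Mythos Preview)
Your argument is correct and is exactly the reduction the paper has in mind: the paper states Theorem~\ref{thm:C_p_Devils_staircase} as an immediate corollary of the $C_1$ version via Proposition~\ref{prop:relationship_of_C_1_and_C_p}, and you have simply written out the details of that reduction carefully, including the verification that the hypotheses transfer to $\sigma/p$.
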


\subsection{Random graphs}

We give a sufficient condition for the activity diagrams of random graphs converging to a Devil's staircase. We collected the necessary background on random graphs in Appendix \ref{app:random_graphs}. 

First, we need the following result. 

\begin{thm}\cite[Theorem 11.32]{Lovasz_large_graphs}, \cite[Corollary 2.6]{Lovasz-Szegedy} \label{t:random_graphs_conv}
If $G_n = G(n, p)$ is a sequence of Erd\H{o}s--R\'enyi graphs then $G_n \to C_p$ with probability $1$.
\end{thm}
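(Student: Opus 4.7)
The plan is to establish the stronger statement that the \emph{labeled} cut distance satisfies $\mathrm{d}_\Box(W_{G_n}, C_p) \to 0$ almost surely; this suffices because, as observed earlier in the paper, $C_p^\varphi = C_p$ for every measure-preserving transformation $\varphi$, so $\delta_\Box(W, C_p) = \mathrm{d}_\Box(W, C_p)$ for all graphons $W$. The overall strategy is a standard concentration-plus-union-bound argument, together with a finiteness reduction that collapses the supremum in the definition of $\mathrm{d}_\Box$ to one over finitely many block-aligned pairs.

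The first step is the finiteness reduction. Both $W_{G_n}$ and $C_p$ are constant on each cell of the product partition $\{A_i \times A_j\}_{1 \le i, j \le n}$, so for measurable $S, T \subseteq [0,1]$ the integral
\[
F(S,T) \; = \; \int_S \int_T \bigl(W_{G_n}(x,y) - p\bigr) \dy \dx
\]
depends only on $s_i := \lambda(S \cap A_i)$ and $t_j := \lambda(T \cap A_j)$. For each fixed $(s_i)$, $F$ is linear in $(t_j)$ with each $t_j \in [0, 1/n]$, so its sup is attained with every $t_j \in \{0, 1/n\}$; by symmetry the same holds for $(s_i)$. Hence, writing $I = \{i : s_i = 1/n\}$ and similarly $J$,
\[
\mathrm{d}_\Box(W_{G_n}, C_p) \;=\; \frac{1}{n^2} \sup_{I, J \subseteq [n]} \Bigl| \sum_{i \in I,\, j \in J} \bigl(\mathbf{1}_{ij \in E(G_n)} - p\bigr) \Bigr| \;+\; O(1/n),
\]
where the $O(1/n)$ absorbs the diagonal $i = j$ terms, whose total contribution is at most $1/n$ in absolute value.

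The second step is concentration. For fixed $(I, J)$ the sum above is a sum of at most $n^2$ independent centred random variables bounded by $1$ in absolute value (ignoring the negligible diagonal), so Hoeffding's inequality yields
\[
\Pr\Bigl[\, \bigl|\textstyle\sum_{i \in I,\, j \in J} (\mathbf{1}_{ij \in E(G_n)} - p) \bigr| > \varepsilon n^2 \,\Bigr] \;\le\; 2 \exp(-2\varepsilon^2 n^2).
\]
Union-bounding over the at most $4^n$ pairs $(I, J)$ gives
\[
\Pr\bigl[\, \mathrm{d}_\Box(W_{G_n}, C_p) > \varepsilon + O(1/n) \,\bigr] \;\le\; 2 \cdot 4^n \exp(-2\varepsilon^2 n^2),
\]
which is summable in $n$. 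Borel--Cantelli then gives $\limsup_n \mathrm{d}_\Box(W_{G_n}, C_p) \le \varepsilon$ a.s., and letting $\varepsilon$ run through a sequence tending to $0$ yields the claim.

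The only genuinely delicate point is the first step: one must argue that the sup over arbitrary measurable $(S,T)$ is attained (or approached) on block-aligned pairs. I expect this to be the main obstacle to write cleanly, since the linearity-in-coordinates argument needs to be applied carefully (once in each variable, noting that a linear functional on a box $\prod [0, 1/n]$ is maximised at a vertex). Everything after that is routine concentration.
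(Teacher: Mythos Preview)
Your argument is essentially correct, and it is worth noting that the paper does \emph{not} give its own proof of this theorem: it simply cites \cite[Theorem 11.32]{Lovasz_large_graphs} and \cite[Corollary 2.6]{Lovasz-Szegedy}, adding only the observation (which you also make) that since $C_p^\varphi = C_p$, the result passes from unlabeled to labeled cut distance. So rather than matching or diverging from the paper's proof, you are supplying a self-contained argument where the paper defers to the literature. The route you take---reduce the supremum to block-aligned $(I,J)$ via bilinearity, apply Hoeffding, union bound over $4^n$ pairs, then Borel--Cantelli---is exactly the standard one, and is in fact how the cited references proceed.

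One small point to tighten in your concentration step: you write that the sum $\sum_{i\in I,\,j\in J}(\mathbf{1}_{ij\in E(G_n)}-p)$ is ``a sum of at most $n^2$ independent centred random variables.'' This is not literally true as stated, because the graph is undirected: for $i\neq j$ the terms indexed by $(i,j)$ and $(j,i)$ involve the \emph{same} edge indicator, so they are not independent. The fix is immediate---group by unordered pairs $\{i,j\}$, so the sum becomes $\sum_{i<j} c_{ij}(\mathbf{1}_{ij\in E}-p)$ with $c_{ij}\in\{0,1,2\}$, a genuine sum of independent bounded centred variables---and Hoeffding gives a bound of the form $2\exp(-c\,\varepsilon^2 n^2)$ for some absolute $c>0$. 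The constant in the exponent you wrote ($2\varepsilon^2$) shifts accordingly, but this is irrelevant for summability and Borel--Cantelli. With that adjustment the proof is clean.
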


We note here, that the referenced papers use the unlabeled cut distance to prove the above theorem. However, as noted in Section \ref{ss:parallel cf on graphons}, $\delta_\Box(G_n, C_p) = \mathrm{d}_\Box(G_n, C_p)$ for each graph $G_n$, hence the theorem remains true if the convergence is understood using the labeled cut distance. We also note that by definition, $\mathrm{d}_\Box(G_n, C_p) = \mathrm{d}_\Box(W_{G_n}, C_p)$, hence the convergence also holds for the graphon version of the graphs.

To deal with the convergence of chip configurations on graphs, we do the following: for a graph $G$ with vertices labeled $v_1, \dots, v_n$, we define the graphon version $\tilde{\sigma} : [0, 1] \to \R$ of a chip configuration $\sigma$ by $\tilde{\sigma}(x) = \frac{1}{n}\sigma(v_i)$ if $\frac{i-1}{n}\leq x < \frac{i}{n}$.
For a sequence of chip configurations $(\sigma_n)_n$ such that $\sigma_n$ lives on the graph $G_n$, we say that they are convergent if the sequence of graphon versions $(\tilde{\sigma}_n)_n$ is convergent in the $\|.\|_1$ norm.

\begin{thm} \label{t:ER_Devil}
Suppose that $(G_n)_n$ is a sequence of (labeled) Erd\H os--R\'enyi random graphs, where $G_n= G(n,p)$, $0<p\leq 1$, $\sigma_n$ is a chip configuration on $G_n$ for each $n$, and $\|\tilde{\sigma}_n - \sigma\|_1 \to 0$ for some chip configuration $\sigma$ on $C_p$ such that $\sigma(x) < p$ for almost all $x \in [0, 1]$. Moreover, suppose that $\lambda(\{x\in[0,1]: \sigma(x)=c\})=0$ for each $c\in \mathbb{R}$ and $\overline{\Phi}_{(1/p)\sigma,y}^q\neq Id$ for any $y \in [0, 1]$ and $q \in \N$. Then
with probability $1$, the sequence of activity diagrams $(s(G_n, \sigma_n))_n$ converges uniformly to the Devil's staircase $s(C_p,\sigma)$.
\end{thm}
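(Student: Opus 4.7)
The plan is to reduce the statement to a limit theorem on graphons, apply the continuity theorem \ref{thm:robustness} to obtain pointwise convergence on a dense set, and then upgrade to uniform convergence via monotonicity of the prelimits plus continuity of the limit.

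First I would observe that the parallel chip-firing on $G_n$ with $\sigma_n$ matches the parallel chip-firing on the graphon $W_{G_n}$ with $\tilde\sigma_n$: a direct induction on the number of steps shows that the firing vectors agree on corresponding vertices and that $\widetilde{U_{G_n}^k\sigma_n}=U_{W_{G_n}}^k\tilde\sigma_n$, whence $a(G_n,\sigma_n)=a(W_{G_n},\tilde\sigma_n)$; since the graphon lift of $\sigma_n+y\deg_{G_n}$ equals $\tilde\sigma_n+y\deg_{W_{G_n}}$ pointwise, also $s(G_n,\sigma_n)=s(W_{G_n},\tilde\sigma_n)$ on $[0,1]$. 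Theorem \ref{t:random_graphs_conv} gives $\mathrm{d}_\Box(W_{G_n},C_p)\to 0$ almost surely, and a standard Chernoff bound plus Borel--Cantelli on the vertex degrees of $G(n,p)$ yields $\mindeg(W_{G_n})\ge p/2$ eventually almost surely. I then work on an outcome in the resulting full-measure event, on which also $\|\tilde\sigma_n-\sigma\|_1\to 0$ by assumption.

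Second, I would establish pointwise convergence on a dense subset of $[0,1]$. By Theorem \ref{thm:C_p_Devils_staircase} the limit $s(C_p,\sigma)$ is a Devil's staircase, hence continuous and nondecreasing, with $s(C_p,\sigma)(0)=0$ (no vertex ever fires, since $\sigma<p$ a.e.) and $s(C_p,\sigma)(1)=1$ (each vertex fires exactly once per step in equilibrium). By Proposition \ref{prop:bad_chip_conf_countable} the set $B\subseteq[0,1]$ of $y$'s for which $(C_p,\sigma+y\cdot p)$ is not smooth is at most countable, so $Y:=[0,1]\setminus B$ is dense. For any $y\in Y$, the graphon $C_p$ has $\mindeg=p$, eventually $\mindeg(W_{G_n})\ge p/2$, and by Lemma \ref{l:cut-distance and degree}
$$\|(\tilde\sigma_n+y\deg_{W_{G_n}})-(\sigma+y\cdot p)\|_1\le\|\tilde\sigma_n-\sigma\|_1+2\,\mathrm{d}_\Box(W_{G_n},C_p)\longrightarrow 0.$$
Applying Theorem \ref{thm:robustness} with $d=p/2$ to the smooth pair $(C_p,\sigma+y\cdot p)$ gives $s(G_n,\sigma_n)(y)=s(W_{G_n},\tilde\sigma_n)(y)\to s(C_p,\sigma)(y)$ for every $y\in Y$.

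Finally, I would upgrade pointwise convergence on $Y$ to uniform convergence on $[0,1]$ by a P\'olya-type argument. The functions $s(G_n,\sigma_n)$ are nondecreasing by Lemma \ref{l:more_chips_fire_more} with values in $[0,1]$, and the limit $s(C_p,\sigma)$ is continuous on $[0,1]$ with endpoint values $0$ and $1$. Given $\varepsilon>0$, uniform continuity of $s(C_p,\sigma)$ allows me to choose a finite grid $y_1<\cdots<y_k$ in $Y$ with $s(C_p,\sigma)(y_1)<\varepsilon$, $s(C_p,\sigma)(y_k)>1-\varepsilon$, and consecutive gaps in $s(C_p,\sigma)$ below $\varepsilon$; once $n$ is large enough that $|s(G_n,\sigma_n)(y_i)-s(C_p,\sigma)(y_i)|<\varepsilon$ for every $i$, monotonicity sandwiches both $s(G_n,\sigma_n)(y)$ and $s(C_p,\sigma)(y)$ between adjacent grid values for $y\in[y_1,y_k]$ and between the corresponding endpoint values on $[0,y_1]$ and $[y_k,1]$, giving $\sup_{y\in[0,1]}|s(G_n,\sigma_n)(y)-s(C_p,\sigma)(y)|=O(\varepsilon)$. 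The main obstacle is that Theorem \ref{thm:robustness} is a $y$-by-$y$ statement and smoothness can fail on a countable set, so uniformity is not automatic; passing to a countable dense $Y$ and combining monotonicity of the prelimits with continuity of the Devil's staircase limit is precisely what closes the gap.
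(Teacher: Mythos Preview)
Your argument is essentially correct and takes a genuinely different route from the paper. The paper verifies directly, from the atomlessness hypothesis on $\sigma$, that $(C_p,\sigma+yp)$ is a smooth pair for \emph{every} $y\in[0,1]$ (a short induction shows $U^n\sigma_y<2p$ and that every level set of $U^n\sigma_y$ is null), and then applies Theorem~\ref{thm:robustness} at each $y$; the passage from ``for each $y$'' to ``uniformly in $y$'' is asserted rather than argued. You instead invoke Proposition~\ref{prop:bad_chip_conf_countable} to get smoothness only on a co-countable $Y$, obtain pointwise convergence on $Y$ from Theorem~\ref{thm:robustness}, and then upgrade to uniform convergence via monotonicity of the prelimits and continuity of the limit. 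Your route has the virtue of making the uniformity step completely explicit.

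Two small gaps remain. First, Theorem~\ref{thm:robustness} requires $W'$ to be connected, so you should add ``$G_n$ is connected for all large $n$'' (Proposition~\ref{prop:random_graph_connected}) to your almost-sure event. Second, and more substantively, your P\'olya sandwich at the right endpoint uses that $s(G_n,\sigma_n)$ takes values in $[0,1]$, but this is not guaranteed by the hypotheses: $L^1$ convergence $\tilde\sigma_n\to\sigma$ does not prevent a few vertices of $G_n$ from carrying arbitrarily many chips, so $a(G_n,\sigma_n+y\deg_{G_n})$ need not be $\le 1$. Without an upper bound you cannot control $s(G_n,\sigma_n)$ on $(y_k,1]$. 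The clean fix is to adopt the paper's direct smoothness argument so that $Y=[0,1]$; then you may include $y=1$ in the grid, Theorem~\ref{thm:robustness} gives $s(G_n,\sigma_n)(1)\to s(C_p,\sigma)(1)=1$, and monotonicity yields $s(G_n,\sigma_n)(y)\le s(G_n,\sigma_n)(1)<1+\varepsilon$ for all $y\in[0,1]$ and large $n$, closing the sandwich.
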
 
\begin{proof}
Since the activity diagram $s(C_p, \sigma)$ is a Devil's staircase by Theorem \ref{thm:C_p_Devils_staircase}, it is enough to prove that with probability 1, $s(G_n, \sigma_n) \to s(C_p, \sigma)$ uniformly as $n\to\infty$. 

We would like to apply Theorem \ref{thm:robustness} for the graphon $C_p$ and the chip configuration $\sigma_y = \sigma + yp\mathbf{1}_{[0,1]}$.

As noted above, with probability 1, $\mathrm{d}_\Box(G_n, C_p) \to 0$. By our assumption, the graphon versions $\tilde{\sigma}_1, \tilde{\sigma}_2, \dots$ converge to $\sigma$ in $\|.\|_1$. 

We need to show that $C_p$ has finite diameter, but this is trivial, since for any set $A$ with $\lambda(A) > 0$, $\varepsilon = p\lambda(A)$ works to show that $\Gamma_\varepsilon(A)=[0,1]$.

We claim that $(C_p, \sigma_y)$ is a smooth pair for any value of $y \in[0,1]$. First notice that  $\sigma_y(x)<2p$ for each $y \in[0,1]$ and $x$. This implies $U^i\sigma_y(x)<2p$ for each $i$ by induction. (Indeed, in any step, any vertex fires at most once. Hence any vertex can gain at most $p$ chips in a step. But if a vertex already had at least $p$ chips, then it also fires, hence its number of chips does not increase.) We now claim that $\lambda(\{x: U^n\sigma_y(x)=c \})=0$ for any $n \in \N$, $y \in [0, 1]$ and $c \in \R$. Let us fix $y \in [0, 1]$ and define $c_i=\lambda(\{x:U^i\sigma_y(x)\geq p \})$. Our claim for $n = 0$ is a condition of the theorem, and for $n > 0$ we have 
\begin{align*}
  \{x: U^n\sigma_y(x)=c \} = \;
  &\{x : U^{n - 1}\sigma_y(x) < p \text{ and } U^{n - 1}\sigma_y(x) = c - c_{n - 1}\} \cup \\
  &\{x : U^{n - 1}\sigma_y(x) \ge p \text{ and } U^{n - 1}\sigma_y(x) = c - c_{n - 1} + p\}.
\end{align*} 
One can easily show by induction on $n$, using the above equality, that $\{x : U^n\sigma_y(x) = c\}$ is indeed a set of measure $0$ for each $n$, $y$ and $c$.

Fix $d<p$ and $\varepsilon>0$. We show that 
\begin{equation}\label{eq:activity_close}
\begin{split}
\text{with probability 1 there exists $n_0$ such that if $n\geq n_0$ and $y \in [0, 1]$, then }\\
|a(C_p,\sigma_y)-a(G_n,\sigma_n + y\deg_{G_n})|<\varepsilon.
\end{split}
\end{equation}

For any $y \in [0, 1]$, we can apply Theorem \ref{thm:robustness} for the pair $(C_p, \sigma_y)$ with $\varepsilon$ and $d$ to get $\delta$.
Let $U_n=W_{G_n}$ be the graphon corresponding to $G_n$. Notice that the graphon version of $\sigma_n + y\deg_{G_n}$ is $\tilde{\sigma}_n + y\deg_{U_n}$. It is also easy to see that $a(G_n,\sigma_n + y\deg_{G_n}) = a(U_n, \tilde{\sigma}_n+y\deg_{U_n})$, since the chip-firings on $G_n$ and on $U_n$ correspond to each other. Therefore to get \eqref{eq:activity_close} using Theorem \ref{thm:robustness} and Theorem \ref{t:finite diam equiv conditions}, we need to show that with probability 1, there exists $n_0$ such that for $n\geq n_0$, $U_n$ is connected, has minimal degree at least $d$, $\mathrm{d}_\Box(C_p,U_n) < \delta$, and for any $y\in [0, 1]$,  $\|\sigma_y-(\tilde{\sigma}_n+y\deg_{U_n})\|_1<\delta$. $U_n$ has degree at least $d$ for each point if and only if $G_n$ has degree at least $dn$. Hence by Proposition \ref{prop:min_degree_in_random_graph_seq} with probability 1 there exists an index $n_1$ such that for each $n\geq n_1$, the graphon $U_n$ has $\mindeg(U_n)\geq d$. If $G_n$ is connected, then $U_n$ is connected, hence by Proposition \ref{prop:random_graph_connected}, there exists $n_2$ such that for $n\geq n_2$, $U_n$ is connected.
With probability 1, $\mathrm{d}_\Box(C_p,U_n)$ tends to 0 by the remark after Theorem \ref{t:random_graphs_conv}, hence there exists an index $n_3$ such that $\mathrm{d}_\Box(C_p,U_n) < \delta$ for $n \ge n_3$. Now $$\|\sigma_y-(\tilde{\sigma}_n + y\deg_{U_n})\|_1\leq\|\sigma - \tilde{\sigma}_n\|_1+\|yp\mathbf{1}_{[0,1]}- y\deg_{U_n}\|_1.$$
Here $\|\sigma - \tilde{\sigma}_n\|_1$ tends to 0 by the assumptions of the theorem, hence it is below $\delta/2$ for $n \ge n_4$ for some index $n_4$. For a fixed $x \in [0, 1]$ let $v$ be the vertex of $G_n$ such that $x$ belongs to the part of $[0,1]$ corresponding to $v$. Then, using that $y \le 1$,  
\begin{align*}
  &\mathbb{P}\left[\text{$\exists y \in [0, 1]$}\left(|yp-y\deg_{U_n}(x)|>\frac{\delta}{2}\right)\right]\leq \mathbb{P}\left[|p-\deg_{U_n}(x)|>\frac{\delta}{2}\right] \\ &= \mathbb{P}\left[\left|p-\frac{\deg_{G_n}(v)}{n}\right|>\frac{\delta}{2}\right]=\mathbb{P}\left[|pn-\deg_{G_n}(v)|>\frac{\delta n}{2}\right]\leq 2e^{-\frac{\delta^2 n}{8}}    
\end{align*}
by Claim \ref{cl:azuma_for_degree}. As $G_n$ has $n$ vertices, $\mathbb{P}[\exists x\in[0,1]: |p-\deg_{U_n}(x)|>\delta/2]	\leq 2ne^{-\frac{\delta^2 n}{8}}$.
Since $\sum_{n \ge 1} 2ne^{-\frac{\delta^2 n}{8}} < \infty$, by the Borel--Cantelli lemma, with probability 1 there exists $n_5$ such that $|p-\deg_{U_n}(x)|\le \delta/2$ for all $n\geq n_5$ and all $x\in[0,1]$. 
Then for $n\geq n_5$ we have $|yp-y\deg_{U_n}(x)|\le \delta/2$ for all $x \in [0, 1]$ and $y \in [0, 1]$, hence
\begin{align*}
  \|yp\mathbf{1}_{[0,1]}- y\deg_{U_n}\|_1 = & \int_0^1|yp- y\deg_{U_n}(x)|\dx \leq \delta/2.
\end{align*}

Now with probability 1 the index $n_0 = \max\{n_1, n_2, n_3, n_4, n_5\}$ exists and the conditions of Theorem \ref{thm:robustness} are satisfied with $W' = U_n$ and $\sigma' = \tilde{\sigma}_n + y\deg_{U_n}$ for $n \ge n_0$ and $y \in [0, 1]$.
We conclude that for each $\varepsilon>0$, with probability 1, there exists an index $n_0$ with $|a(C_p,\sigma_y)-a(G_n,\sigma_n + y\deg_{G_n})|<\varepsilon$ for $n\geq n_0$ and $y \in [0, 1]$. 
Taking a sequence of $\varepsilon$ values tending to 0, we conclude that with probability 1, $s(G_n, \sigma_n)$ tends to $s(C_p, \sigma)$ uniformly, therefore the proof is complete.
\end{proof}

\subsection{Geometric random chip configurations}

We show a concrete example where the activities of a one parameter family of chip configurations on a random graph give a Devil's staircase with high probability. We will again take an Erd\H os--R\'enyi random graph, but this time we put a random number of chips on the vertices independently following geometric distribution, and look at how the activity changes if we increase the mean of the geometric distribution.

Let $G_n=G(n,p)$ for some $0<p\leq 1$. Suppose that for $v\in V(G_n)$, $\sigma^\mu_n(v)\sim Geometric(\frac{1}{1 + \mu n})$ independently for some $\mu > 0$. Here we mean the geometric distribution as $P(\sigma_n^\mu(v) = k) = (\mu n)^k / (1+\mu n)^{k+1}$ for $k\geq 0$. Note that this way, the expected value $\mathbb{E}\sigma^\mu_n(v)= \mu n$. Let us relabel the vertices such that $\sigma^\mu_n(v_1)\leq \sigma^\mu_n(v_2)\leq \dots$, and let us denote by $\tilde{\sigma}^\mu_n$ the corresponding chip configuration on the graphon $W_{G_n}$. Let us take these random chip configurations independently for each $n\in \mathbb{N}$.
For different values of $\mu$, we couple the random chip configurations in the following way. For each vertex $v$, we independently generate countably many independent uniform random variables between $0$ and $1$. For some value $\mu$, we put $k$ chips on $v$ if the first $k$ of its random variables are between $\frac{1}{1+\mu n}$ and $1$, and the $(k+1)^{th}$ is between $0$ and $\frac{1}{1+\mu n}$. This way we obtain independent $Geometric(\frac{1}{1 + \mu n})$ random variables for each vertex.

We show the following. 
\begin{thm}\label{t:geometric_staircase}
Suppose that $G_n= G(n,p)$ for $0<p\leq 1$, and $\sigma^\mu_n$ is a chip configuration where the number of chips on each vertex is an independent Geometric random variable with mean $\mu n$, coupled for different values of $\mu$ as above. Then
with probability one, the sequence of functions $\mu \mapsto a(G_n, \sigma^\mu_n)$ converges pointwise to a Devil's staircase on the interval $\mu\in[0,\frac{p}{\log 2}]$.
\end{thm}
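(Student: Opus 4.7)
The plan is to identify an explicit limit chip configuration on $C_p$, transfer the convergence of activities via Theorem \ref{thm:robustness}, and then prove that the resulting limit is a Devil's staircase on $[0, p/\log 2]$. Since $\tfrac{1}{n}\cdot\mathrm{Geom}(1/(1+\mu n))$ converges in distribution to $\mathrm{Exp}(1/\mu)$, whose quantile function is $\sigma^\mu(x) := -\mu\log(1-x)$, a Glivenko--Cantelli argument applied to the given monotone coupling shows that, with probability one, $\|\tilde\sigma_n^\mu - \sigma^\mu\|_1 \to 0$ simultaneously for every $\mu$ in a prescribed countable dense subset $D \subseteq [0, p/\log 2]$.

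For each $\mu \in (0, p/\log 2)$ I would then verify that $(C_p, \sigma^\mu)$ is a smooth pair. A direct computation yields $U\sigma^\mu(x) = r_\mu(x) + c_\mu$ with $r_\mu(x) := \sigma^\mu(x) \bmod p$ piecewise strictly monotone in $x$ (the pieces being $[1-e^{-kp/\mu}, 1-e^{-(k+1)p/\mu})$ for $k \ge 0$) and $c_\mu = p/(e^{p/\mu}-1) < p$; induction on $n$, exploiting the preservation of piecewise strict monotonicity under $U$, shows $\lambda(\{x : U^n\sigma^\mu(x) = kp\}) = 0$ for all $n, k$. Combined with the facts that, with probability one for large $n$, $\mathrm{d}_\Box(W_{G_n}, C_p) \to 0$, $\mindeg(W_{G_n}) \ge d$ for some $d > 0$, and $G_n$ is connected (all established in the proof of Theorem \ref{t:ER_Devil}), Theorem \ref{thm:robustness} gives $a(G_n, \sigma_n^\mu) \to a(C_p, \sigma^\mu)$ for each $\mu \in D$. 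Pointwise convergence at every $\mu \in [0, p/\log 2]$ then follows by a sandwich argument: both the approximating sequence (through the monotone coupling) and the limit are nondecreasing in $\mu$ by Lemma \ref{l:more_chips_fire_more}, and $\mu \mapsto a(C_p, \sigma^\mu)$ is continuous by Theorem \ref{thm:robustness} applied to the family $\{(C_p, \sigma^\mu)\}$ together with $\|\sigma^{\mu_1}-\sigma^{\mu_2}\|_1 = |\mu_1-\mu_2|$.

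Let $F(\mu) := a(C_p, \sigma^\mu)$. Then $F$ is continuous and nondecreasing, $F(0) = 0$, and $F(p/\log 2) = 1$, the last because at the right endpoint $c_\mu = p$ forces $U\sigma^\mu \ge p$ everywhere, making every vertex fire at every subsequent step. The main obstacle is to prove that $F$ is locally constant on an open dense subset of $[0, p/\log 2]$. Using the decomposition $U\sigma^\mu = r_\mu + c_\mu$ with $r_\mu$ stable on $C_p$ and $c_\mu/p \in [0, 1)$, we have $F(\mu) = s(C_p, r_\mu)(c_\mu/p)$, which by the rotation-number formula used in the proof of Theorem \ref{thm:C_p_Devils_staircase} equals the Poincar\'e rotation number $\rho(\overline{\Phi}_{r_\mu/p,\, c_\mu/p})$ of an orientation-preserving degree-1 circle homeomorphism depending continuously on $\mu$. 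Monotonicity of $F$ implies monotonicity of this rotation number in $\mu$, and by the mode-locking theory of one-parameter families of circle homeomorphisms (Arnold tongues), $F^{-1}(q)$ is a closed interval for every rational $q \in [0, 1]$. The hardest remaining step is to verify that each such interval is non-degenerate; this reduces to checking the non-degeneracy hypothesis $\overline{\Phi}_{r_\mu/p,\, y}^q \neq \mathrm{id}$ of Theorem \ref{thm:C_p_Devils_staircase} at some $\mu \in F^{-1}(q)$. The piecewise strict monotonicity of $r_\mu$ rules out $\overline{\Phi}^q = \mathrm{id}$, since such an identity iterate would require the homeomorphism to be conjugate to a rational rotation, incompatible with the non-constant exponential density of $r_\mu$. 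Since the rationals are dense in $[0, 1]$, the union of plateaus is open and dense in $[0, p/\log 2]$, so $F$ is a Devil's staircase, completing the proof.
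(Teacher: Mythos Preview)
Your overall architecture matches the paper's: identify the limit configuration $\sigma^\mu(x)=-\mu\log(1-x)$, use a Glivenko--Cantelli argument for $\|\tilde\sigma_n^\mu-\sigma^\mu\|_1\to 0$, apply Theorem~\ref{thm:robustness} at a countable dense set of $\mu$'s, and extend to all $\mu$ via monotonicity plus continuity. That part is fine and essentially identical to the paper's proof.

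The gap is in the Devil's staircase verification for $F(\mu)=a(C_p,\sigma^\mu)$. Two issues:

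\textbf{(1) The reduction to Theorem~\ref{thm:C_p_Devils_staircase} is not direct.} That theorem concerns the one-parameter family $y\mapsto s(C_p,\sigma)(y)$ for a \emph{fixed} stable $\sigma$, with only the translation $y$ varying. In your decomposition $F(\mu)=s(C_p,r_\mu)(c_\mu/p)$, both the base configuration $r_\mu$ and the translation $c_\mu$ depend on $\mu$. Knowing that $s(C_p,r_{\mu_0})$ has a plateau at some height $q$ does not by itself yield a plateau for $F$; you would still need pointwise monotonicity (in $\mu$) of the associated circle maps, or at least uniform continuity of $\mu\mapsto\Phi_{r_\mu/p,\,c_\mu/p}$ together with the periodic-point argument. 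You assert monotonicity of the rotation number (which is just $F$), but the mode-locking step requires monotonicity of the maps themselves.

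\textbf{(2) The argument that $\overline{\Phi}^q\neq\mathrm{id}$ is not a proof.} Saying that an identity iterate ``would require the homeomorphism to be conjugate to a rational rotation, incompatible with the non-constant exponential density of $r_\mu$'' is not a valid inference: many non-rotation circle homeomorphisms have iterates equal to the identity on proper subintervals, and you have given no mechanism ruling out a global identity here. The paper handles both issues simultaneously by computing the lift explicitly,
\[
f^\mu(x)=\frac{e^{-p(1-x)/\mu}}{1-e^{-p/\mu}}\qquad(x\in[0,1]),
\]
and then checking directly that (a) $\mu\mapsto f^\mu$ is pointwise increasing and sup-norm continuous, and (b) $(f^\mu)'>0$ and $(f^\mu)''>0$ on $\R\setminus\Z$, so any finite iterate is strictly convex on an interval avoiding integer points and therefore cannot equal $\mathrm{id}_\R+k$. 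This concrete convexity computation is the missing ingredient in your sketch; without it (or an equivalent), the non-degeneracy of the rational plateaus is unproven.
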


To prove this theorem, we need to find out the limit of the chip configurations.
Let $\sigma^\mu(v)=-\mu \log(1-v)$ be a chip configuration on $C_p$, where by $\log$ we mean the natural logarithm.
We will show the following: 
\begin{lemma} \label{l:sigma_n_tends_to_sigma}
For any fixed $\mu > 0$, $\|\tilde{\sigma}^\mu_n - \sigma^\mu\|_1 \to 0$ with probability $1$ as $n\to\infty$. 
\end{lemma}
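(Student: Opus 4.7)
The plan is to invoke Scheff\'e's lemma: since $\tilde\sigma^\mu_n$ and $\sigma^\mu$ are non-negative, it suffices to prove (i) pointwise convergence almost everywhere and (ii) convergence of the $L^1$ norms. Indeed, the identity $\|\tilde\sigma^\mu_n - \sigma^\mu\|_1 = \int_0^1(\tilde\sigma^\mu_n - \sigma^\mu)\dx + 2\int_0^1(\sigma^\mu - \tilde\sigma^\mu_n)^+\dx$ shows that the first summand on the right tends to $0$ by (ii) and the second by dominated convergence once (i) is in hand, with $\sigma^\mu \in L^1([0,1])$ as dominating function (noting $\int_0^1\sigma^\mu\dx = \mu$).

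For (i), I would use an inverse-CDF coupling: writing $a_n = \log(1 + 1/(\mu n))$, one can realize $\sigma^\mu_n(v) = \lfloor -\log U_v / a_n \rfloor$ for i.i.d.\ $U_v \sim \mathrm{Uniform}(0,1)$ (one copy per vertex of $G_n$, independent across $n$). After the sorting in the definition of $\tilde\sigma^\mu_n$, on the interval $[\tfrac{i-1}{n},\tfrac{i}{n})$ the graphon equals $\lfloor -\log U_{(n-i+1)} / a_n\rfloor / n$, where $U_{(1)} \le \dots \le U_{(n)}$ are the corresponding uniform order statistics. The Dvoretzky--Kiefer--Wolfowitz inequality gives $\mathbb{P}(\sup_u|\hat F_n(u) - u| > \varepsilon) \le 2e^{-2n\varepsilon^2}$; summability together with Borel--Cantelli across $n$ yields $\sup_k|U_{(k)} - k/n| \to 0$ almost surely. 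Since $n a_n \to 1/\mu$, for any fixed $x \in (0,1)$, taking $i = \lceil nx\rceil$ gives $U_{(n-i+1)} \to 1 - x$ and hence $\tilde\sigma^\mu_n(x) \to -\mu\log(1-x) = \sigma^\mu(x)$ almost surely; the floor function contributes only an $O(1/n)$ error. By Fubini, for a.e.\ $\omega$ the convergence holds for a.e.\ $x$.

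For (ii), note that $\int_0^1\tilde\sigma^\mu_n\dx = \tfrac{1}{n^2}\sum_v \sigma^\mu_n(v)$, which has expectation $\mu$. A direct computation gives the explicit moment generating function $\mathbb{E}[\exp(t\sigma^\mu_n(v))] = 1/(1 - \mu n(e^t - 1))$ valid for $t < a_n$; substituting $t = s/n$, so that $(\mathbb{E}[\exp((s/n)\sigma^\mu_n(v))])^n = (1 - \mu s - O(1/n))^{-n}$, and optimizing $s > 0$ in the resulting Chernoff bound (using that $f(s) := e^{-s(\mu+\varepsilon)}/(1-\mu s)$ satisfies $f(0)=1$ and $f'(0) = -\varepsilon < 0$) produces a summable tail estimate of the form $\mathbb{P}(|\tfrac{1}{n^2}\sum_v \sigma^\mu_n(v) - \mu| > \varepsilon) \le Ce^{-c(\varepsilon,\mu)n}$, so Borel--Cantelli gives the required a.s.\ convergence of the $L^1$ norms. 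The main obstacle is precisely the singularity of $\sigma^\mu$ at $x=1$: pointwise convergence alone gives no control on the tail contribution of the top order statistics, and Scheff\'e's lemma is exactly the device that converts that missing tail information into $L^1$ convergence, via the a priori a.s.\ convergence of the full integrals.
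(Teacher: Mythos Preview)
Your argument is correct and takes a genuinely different route from the paper's. The paper uses the quantile--CDF duality (the ``mirror image'' observation) to rewrite $\|\tilde\sigma^\mu_n-\sigma^\mu\|_1$ as $\|F_n-E\|_1$, where $F_n$ is the (rescaled) empirical distribution function of the geometrics and $E(t)=1-e^{-t/\mu}$. It then splits $\|F_n-E\|_1\le\|F_n-F\|_1+\|F-E\|_1$, handles the deterministic piece $\|F-E\|_1$ by direct analysis, and treats $\|F_n-F\|_1$ by a Glivenko--Cantelli style argument (Azuma on a grid of points) together with a separate Borel--Cantelli bound on $\max_k X_k^n$ to control the tail of the integral.

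Your Scheff\'e approach is cleaner in one respect: the tail near $x=1$, which the paper handles by the separate $\max_k X_k^n$ estimate, is absorbed automatically once you know $\int\tilde\sigma^\mu_n\to\mu$. Two minor points to tighten: (a) after DKW plus Borel--Cantelli you actually obtain a \emph{single} almost-sure event on which $\sup_k|U_{(k)}-k/n|\to 0$, and on that event $\tilde\sigma^\mu_n(x)\to\sigma^\mu(x)$ for every $x\in(0,1)$, so the Fubini step is not even needed; (b) in the Chernoff step, the $O(1/n)$ correction in $(1-\mu s-O(1/n))^{-n}$ contributes only a bounded multiplicative constant (since $(1-\mu s-c/n)^{-n}\le C_s(1-\mu s)^{-n}$), and the lower tail should be mentioned as following from the same computation with $t<0$. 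With those details filled in, your proof is complete and arguably more conceptual than the paper's, at the cost of invoking two named results (DKW and Scheff\'e) that the paper replaces by explicit, self-contained estimates.
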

We will prove the lemma later. As $G_n \to C_p$ with probability 1, one needs to examine the behaviour of the activity of $\sigma^\mu$ on $C_p$. Unfortunately we cannot directly apply Theorem \ref{thm:C_p_Devils_staircase} here, as we do not talk about activity diagrams, but a different one-parameter family of chip configurations. However, we can still show the following.

\begin{lemma}\label{l:limit_geom_devil's_staircase}
  The map $\mu \mapsto a(C_p,\sigma^\mu)$ is a Devil's staircase on $[0, \frac{p}{\log 2}]$.
\end{lemma}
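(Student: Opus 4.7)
The plan is to reduce to the Poincar\'e rotation number framework of \cite{Lionel_parallel}. By Proposition \ref{prop:relationship_of_C_1_and_C_p} with the substitution $\nu = \mu/p$, it suffices to show that $\nu \mapsto a(C_1, \tau^\nu)$ is a Devil's staircase on $[0, 1/\log 2]$, where $\tau^\nu(v) := -\nu \log(1-v)$. Since $\tau^\nu(V)$ is exponentially distributed with mean $\nu$ for $V \sim \text{Uniform}[0,1]$, we have $M^\nu := \int_0^1 \lfloor \tau^\nu(y) \rfloor \dy = \sum_{k \ge 1} e^{-k/\nu} = \frac{1}{e^{1/\nu} - 1}$, and a direct computation gives $U_{C_1}\tau^\nu(v) = \{\tau^\nu(v)\} + M^\nu$. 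For $\nu \in (0, 1/\log 2]$ we have $M^\nu \in (0,1]$, so $U\tau^\nu$ takes values in $[M^\nu, 1 + M^\nu) \subset [0, 2)$ and is preconfined; moreover its distribution is absolutely continuous with density $\frac{e^{-(t-M^\nu)/\nu}/\nu}{1 - e^{-1/\nu}}$ on $[M^\nu, 1 + M^\nu)$. Hence $f_{U\tau^\nu}$ as in \eqref{e:f def} is continuous, and Lemma~6 of \cite{Lionel_parallel} gives $a(C_1, \tau^\nu) = a(C_1, U\tau^\nu) = \rho(f_{U\tau^\nu})$.

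Next I would establish that $\nu \mapsto \rho(f_{U\tau^\nu})$ is continuous, non-decreasing, and achieves the endpoint values $0$ and $1$. Monotonicity is immediate from Lemma \ref{l:more_chips_fire_more} since $\nu \mapsto \tau^\nu(v)$ is pointwise non-decreasing. For continuity, an induction on $n$ (using that the update rule on a preconfined configuration is given by subtracting an indicator and adding a constant offset) shows that the distribution of $U^n\tau^\nu$ remains absolutely continuous for all $n$, so $(C_1, \tau^\nu)$ is a smooth pair for every $\nu \in (0, 1/\log 2)$; Theorem \ref{thm:robustness} (with $W = W' = C_1$ and $d = 1$), combined with the $L^1$-continuity $\|\tau^\nu - \tau^{\nu'}\|_1 = |\nu - \nu'|$, then yields continuity of the activity. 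At $\nu = 0$ we have $\tau^0 \equiv 0$ and hence $a = 0$; at $\nu = 1/\log 2$, $M^{1/\log 2} = 1$ gives $U\tau^{1/\log 2}(v) \in [1, 2)$ almost everywhere, so every vertex fires in every subsequent step and $a = 1$.

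The main obstacle is the mode-locking or Arnold-tongue property: showing that for every rational $r = p/q \in (0, 1)$ the preimage $\{\nu : \rho(f_{U\tau^\nu}) = r\}$ is a non-degenerate interval. Together with continuity and monotonicity established above, this forces $\rho$ to be locally constant on an open dense set and produces the Devil's staircase. The standard theory of continuous degree-one circle maps gives mode-locking at $\nu_0$ with $\rho(f_{U\tau^{\nu_0}}) = p/q$ provided the $q$-th iterate $f_{U\tau^{\nu_0}}^q$ is not the rigid translation $x \mapsto x + p$, which is exactly the analog of the hypothesis $\overline{\Phi}^q_{\sigma, y} \neq \mathrm{id}$ in Theorem \ref{thm:C_p_Devils_staircase}. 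I would verify this non-degeneracy directly from the explicit form of $f_{U\tau^\nu}$: because the density of $\{\tau^\nu\}$ is strictly decreasing and real-analytic, $f_{U\tau^\nu}$ is a piecewise-analytic strictly increasing circle map with a single break at the point where $1 - x = M^\nu$, and iterates of such a map cannot agree with a rigid translation on an interval without forcing analytic identities in $\nu$ that fail throughout $(0, 1/\log 2)$. Combining this non-degeneracy with continuity, monotonicity, and the endpoint values completes the proof.
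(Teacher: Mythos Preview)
Your reduction to the rotation-number framework, and your use of Theorem~\ref{thm:robustness} for continuity together with Lemma~\ref{l:more_chips_fire_more} for monotonicity, are correct and in fact streamline the paper's argument: the paper instead computes the circle map $f^\mu(x)=e^{-p(1-x)/\mu}/(1-e^{-p/\mu})$ explicitly and verifies monotonicity and sup-norm continuity of $\mu\mapsto f^\mu$ by hand. Your route exploits machinery already built in the paper and avoids those estimates.

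The gap is in the mode-locking step. You assert that because $f_{U\tau^\nu}$ is piecewise-analytic with a single break (at $x=1-M^\nu$), its $q$-th iterate cannot coincide with a rigid translation, invoking ``analytic identities in $\nu$ that fail throughout $(0,1/\log 2)$.'' This is not a proof. First, piecewise-analyticity with a break does not by itself preclude an iterate from being a rigid rotation; you would need to track how the break propagates under iteration and argue that the derivative jumps cannot all cancel. Second, the appeal to analyticity in $\nu$ can at best show that the bad set of $\nu$ is discrete, not empty, and a single bad $\nu$ already spoils the Devil's-staircase conclusion at that point. You do mention the key fact---that the density of $\{\tau^\nu\}$ is strictly decreasing---but you do not use it correctly. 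What that fact actually gives is that $f_{U\tau^\nu}$ (equivalently the paper's $f^\mu$) is \emph{strictly convex} on each smooth piece, since its derivative is that density evaluated along a decreasing affine reparametrisation. The paper then picks a small interval whose forward orbit avoids all break points up to time $q$, and observes that a composition of strictly convex increasing maps is strictly convex, hence not affine; this rules out $(f^\mu)^q=\id_\R+k$ for every $\mu$ and every $q\ge1$. Replacing your analyticity sketch with this convexity argument closes the gap.
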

\begin{proof}
  The chip configuration $\sigma^\mu$ is unbounded, but $U\sigma^\mu$ is bounded, and since $a(\sigma^\mu) = a(U\sigma^\mu)$, it is enough to deal with the latter. To calculate $U\sigma^\mu$, let us denote by $\{x\}_p$ the $p$-fractional part of $x \in \R$, that is, the unique number in $[0, p)$ with the property that $x + kp = \{x\}_p$ for some $k \in \Z$. Then
  \begin{align}
  \label{e:U sigma mu}
  U\sigma^\mu(v) = \{-\mu \log(1 - v)\}_p + p\sum_{n \ge 1} \lambda(\{u : -\mu \log(1 - u) \ge np\}).
  \end{align}
  Now, as it is easier to handle monotone increasing chip configurations and $U\sigma^\mu$ is not increasing (as a function $v \mapsto (U\sigma^\mu)(v)$), we try to rearrange it to an increasing chip configuration $\overline{\sigma}^\mu$ with the property that 
  \begin{align}
    \label{e:changing chip-dist}
    \text{$\lambda(\{v : U\sigma^\mu(v) < x\}) = \lambda(\{v : \overline{\sigma}^\mu(v) < x\})$ for every $x \in \R$}.
  \end{align}
  Clearly, if \eqref{e:changing chip-dist} holds, the analogous statement will hold for $U^k(U\sigma^\mu(v))$ and $U^k(\overline{\sigma}^\mu)$, hence, by Proposition \ref{p:two activity def}, $a(\overline{\sigma}^\mu) = a(U\sigma^\mu) = a(\sigma^\mu)$. 
  
To define an increasing $\overline{\sigma}^\mu$ satisfying \eqref{e:changing chip-dist} , our only option is that $\overline{\sigma}^\mu(v) = x$ if and only if $\lambda(\{u : U\sigma^\mu(u) < x\}) = v$. Since 
  \begin{align}
    \label{e:level measure}
    \lambda(\{u : -\mu\log(1 - u) \ge y\}) = e^{-\frac{y}{\mu}},
  \end{align}
  \begin{align}
    \label{e:y(mu) def}
    p\sum_{n \ge 1} \lambda(\{u : -\mu \log(1 - u) \ge np\}) = \frac{pe^{-\frac{p}{\mu}}}{1 - e^{-\frac{p}{\mu}}} 
    =: y(\mu).
  \end{align}
  From \eqref{e:level measure} we also have for $x \in [0, p)$ that
  \begin{align*}
    \lambda(\{v : \{-\mu \log(1 - v)\}_p < x\}) &= \lambda\bigg(\bigg\{v : -\mu \log(1 - v) \in \bigcup_{n \ge 0} [np, np + x)\bigg\}\bigg) \\ &= \sum_{n \ge 0} e^{-\frac{np}{\mu}} - e^{-\frac{np + x}{\mu}} = \frac{1 - e^{-\frac{x}{\mu}}}{1 - e^{-\frac{p}{\mu}}}.
  \end{align*}
  Hence, using also \eqref{e:U sigma mu} and \eqref{e:y(mu) def},
  \begin{align*}
    \lambda(\{u : U\sigma^\mu(u) < x + y(\mu)\}) = \frac{1 - e^{-\frac{x}{\mu}}}{1 - e^{-\frac{p}{\mu}}} = v \Leftrightarrow x = -\mu\log(1 - v + ve^{-\frac{p}{\mu}}),
  \end{align*}
  thus
  \begin{align*}
    \overline{\sigma}^\mu(v) = -\mu\log(1 - v + ve^{-\frac{p}{\mu}}) + y(\mu) = -\mu\log(1 - v + ve^{-\frac{p}{\mu}}) + \frac{pe^{-\frac{p}{\mu}}}{1 - e^{-\frac{p}{\mu}}}.
  \end{align*}
  
  It is easy to check that it satisfies \eqref{e:changing chip-dist}, and also that $\overline{\sigma}^\mu(v) \le 2p$ if $v \in [0, 1]$ and $\mu \in [0, \frac{p}{\log(2)}]$. Since $-\mu\log(1 - v + ve^{-\frac{p}{\mu}}) \ge 0$ if $v \in [0, 1]$, we can apply Proposition \ref{prop:relationship_of_C_1_and_C_p} and Lemma \ref{l:a(sigma + y) = rho(R_y circ f_sigma)} with $\sigma = \overline{\sigma}^\mu - y(\mu)$ and $y = y(\mu)$ to get that $$a(\overline{\sigma}^\mu) = \rho\big(R_{\frac{y(\mu)}{p}}\big(f_{\frac{\overline{\sigma}^\mu - y(\mu)}{p}}\big)\big),$$
  where $f_{\frac{\overline{\sigma}^\mu - y(\mu)}{p}}$ is defined as in \eqref{e:f def}.
  
  With the notation $f^\mu = R_{\frac{y(\mu)}{p}}\big(f_{\frac{\overline{\sigma}^\mu - y(\mu)}{p}}\big)$, our task is to show that $\mu\mapsto \rho(f^\mu)$ is a Devil's staircase. For $x \in [0, 1]$, 
  \begin{align*}
    f^\mu(x) &= R_{\frac{y(\mu)}{p}}\big(f_{\frac{\overline{\sigma}^\mu - y(\mu)}{p}}\big)(x) \\ &= \frac{y(\mu)}{p} + \lambda\bigg(\bigg\{v : -\frac{\mu}{p}\log(1 - v + ve^{-\frac{p}{\mu}}) \ge 1 - x\bigg\}\bigg)  \\ &= \frac{y(\mu)}{p} + \frac{e^{-\frac{p(1 - x)}{\mu}} - e^{-\frac{p}{\mu}}}{1 - e^{-\frac{p}{\mu}}} = \frac{e^{-\frac{p(1 - x)}{\mu}}}{1 - e^{-\frac{p}{\mu}}}.
  \end{align*}
  
  To show that $\mu\mapsto \rho(f^\mu)$ is a Devil's staircase, as in \cite{Lionel_parallel}, we need to show that $\mu \mapsto f^\mu$ is increasing, continuous with respect to the supremum norm, and that $(f^\mu)^n \neq \id_{\R} + k$ for each $n \ge 1$, $k \in \Z$. (Note that the last condition says that if $\overline{f}^\mu : \Sphere^1 \to \Sphere^1$ is the circle map corresponding to $f^\mu$ then $(\overline{f}^\mu)^n$ is not the identity.)
  
  To show that $\mu\mapsto f^\mu$ is increasing, we need to show for $x \in [0, 1]$, $\mu < \mu'$ that $f^\mu(x) \le f^{\mu'}(x)$. This inequality easily follows from $e^{-\frac{p(1 - x)}{\mu}} < e^{-\frac{p(1 - x)}{\mu}}$ and $1 - e^{-\frac{p}{\mu}} > 1 - e^{-\frac{p}{\mu'}}$. 
  
  Now we show that $\mu\mapsto f^\mu$ is continuous with respect to the supremum norm. For $\mu < \mu'$, 
  \begin{align*}
    \left|\frac{e^{-\frac{p(1 - x)}{\mu'}}}{1 - e^{-\frac{p}{\mu'}}} - \frac{e^{-\frac{p(1 - x)}{\mu}}}{1 - e^{-\frac{p}{\mu}}}\right| 
    \le \left|\frac{e^{-\frac{p(1 - x)}{\mu'}}}{1 - e^{-\frac{p}{\mu'}}} - \frac{e^{-\frac{p(1 - x)}{\mu'}}}{1 - e^{-\frac{p}{\mu}}}\right| + \left|\frac{e^{-\frac{p(1 - x)}{\mu'}}}{1 - e^{-\frac{p}{\mu}}} - \frac{e^{-\frac{p(1 - x)}{\mu}}}{1 - e^{-\frac{p}{\mu}}}\right| \\
    \le \left|\frac{1}{1 - e^{-\frac{p}{\mu'}}} - \frac{1}{1 - e^{-\frac{p}{\mu}}}\right|
      + \left|\frac{e^{-\frac{p(1 - x)}{\mu'}}}{1 - e^{-\frac{p}{\mu}}} - \frac{e^{-\frac{p(1 - x)}{\mu}}}{1 - e^{-\frac{p}{\mu}}}\right|,
  \end{align*}
  where, using the fact that for $x \ge 0$, $1 - e^{-x} \le x$, 
  \begin{align*}
    \left|\frac{e^{-\frac{p(1 - x)}{\mu'}}}{1 - e^{-\frac{p}{\mu}}} - \frac{e^{-\frac{p(1 - x)}{\mu}}}{1 - e^{-\frac{p}{\mu}}}\right| 
    \le \frac{1}{1 - e^{-\frac{p}{\mu}}}e^{-\frac{p(1 - x)}{\mu'}}\left(1 - e^{-\left(\frac{p(1 - x)}{\mu} - \frac{p(1 - x)}{\mu'}\right)}\right) \\ 
    \le \frac{1}{1 - e^{-\frac{p}{\mu}}}\left(\frac{p(1 - x)}{\mu} - \frac{p(1 - x)}{\mu'}\right) 
    = \frac{1}{1 - e^{-\frac{p}{\mu}}} \frac{(\mu - \mu')p (1 - x)}{\mu\mu'} \\ 
    \le \frac{1}{1 - e^{-\frac{p}{\mu}}} \frac{(\mu - \mu')p }{\mu\mu'}.
  \end{align*}
  Thus, 
  $$\|f^\mu - f^{\mu'}\| \le \left|\frac{1}{1 - e^{-\frac{p}{\mu'}}} - \frac{1}{1 - e^{-\frac{p}{\mu}}}\right| + \frac{1}{1 - e^{-\frac{p}{\mu}}} \frac{(\mu - \mu')p }{\mu\mu'},$$
  showing that $\mu\mapsto f^\mu$ is continuous.
  
  It remains to show that $(f^\mu)^n \neq \id_{\R} + k$ for any $n \ge 1$, $k \in \Z$. Let us fix $n \ge 1$, and choose $\varepsilon > 0$ small enough so that $(f^\mu)^k((0, \varepsilon))$ does not contain an integer point for any $k \le n$. To finish the proof of the proposition, we now show that $(f^\mu)^n$ is strictly convex on the interval $(0, \varepsilon)$. 
  
  For $x \in (0, 1)$, the derivative and second derivative of $f^\mu$ exists at $x$, and is positive, since
  $$
    (f^\mu)'(x) = \frac{p}{\mu}\cdot\frac{e^{-\frac{p(1 - x)}{\mu}}}{1 - e^{-\frac{p}{\mu}}}\quad\quad (f^\mu)''(x) = \left(\frac{p}{\mu}\right)^2\frac{e^{-\frac{p(1 - x)}{\mu}}}{1 - e^{-\frac{p}{\mu}}},
  $$
  and using the property $f^\mu(x + 1) = f^\mu(x) + 1$,
  \begin{align}
    \label{e:f''(x) > 0}
    (f^\mu)'(x) > 0 \text{ and } (f^\mu)''(x) > 0 \text{ for every } x \in \R \setminus \Z. 
  \end{align}
  Since the composition of twice differentiable functions is twice differentiable, $(f^\mu)^k$ is twice differentiable on $(0, \varepsilon)$. It is enough to show that $((f^\mu)^k)''(x) > 0$ for any $x \in (0, \varepsilon)$ and $k \le n$, which we prove by induction on $k$ together with the statement $((f^\mu)^k)'(x) > 0$. 
  
  For $k = 1$ the statements follows from \eqref{e:f''(x) > 0}. Now suppose that the statements are true for $k < n$, we wish to prove it for $k + 1$. By the choice of $\varepsilon$, $(f^\mu)^k((0, \varepsilon)) \subseteq (n, n + 1)$ for some $n \in \Z$, hence $f^\mu$ is twice differentiable on $(f^\mu)^k((0, \varepsilon))$ with a positive derivative and second derivative. Hence, $((f^\mu)^{k + 1})' = (f^\mu \circ (f^\mu)^k)' = ((f^\mu)'\circ (f^\mu)^k) \cdot ((f^\mu)^k)' > 0$ on $(0, \varepsilon)$ by the induction hypothesis and \eqref{e:f''(x) > 0}. Similarly, $((f^\mu)^{k + 1})'' = (f^\mu \circ (f^\mu)^k)'' = ((f^\mu)''\circ (f^\mu)^k) \cdot (((f^\mu)^k)')^2 + ((f^\mu)'\circ (f^\mu)^k) \cdot ((f^\mu)^k)'' > 0$, again using the induction hypothesis and \eqref{e:f''(x) > 0}. Thus the proof of the proposition is complete. 
\end{proof}

\begin{proof}[Proof of Lemma \ref{l:sigma_n_tends_to_sigma}]
Let $X_1^n, \dots X_n^n$ be independent Geometric random variables with mean $\mu n$, i.e., $X_i^n\sim Geometric(\frac{1}{1+\mu n})$ for all $i \le n$ so that $\sigma_n^\mu(v_i)$ is the $i^{th}$ smallest among $\{X_1^n,\dots, X_n^n\}$. Let $F_n:[0,\infty]\to \mathbb [0,1]$ be the appropriately normalized empirical distribution function, which in our case is $F_n(t)=\frac{1}{n} \sum_{k=1}^n I\{X^n_k\leq tn\}$, where we normalize by $n$ to match the graphon case. Let $E:[0,\infty] \to [0,1]$, $E(t)=1-e^{-\frac{t}{\mu}}$ which is the inverse of $\sigma^\mu$ taken as a function from $[0,1]$ to $\mathbb{R}_+$. Notice that the graph of $\sigma^\mu$ is the mirror image of the graph of $E$. Moreover, if we connect the points $(x,\lim_{y\to x^-}\tilde{\sigma}^\mu_n(x))$ and $(x,\lim_{y\to x^+}\tilde{\sigma}^\mu_n(x))$ for all jumping points in the graph of $\tilde{\sigma}^\mu_n$, and similarly for the graph of $F_n$, then the two obtained broken lines are once again mirror images of each other. Hence $\|\tilde{\sigma}^\mu_n - \sigma^\mu\|_1=\|F_n - E\|_1$.
Thus, it is enough to prove that $\|F_n - E\|_1 \to 0$ with probability $1$ as $n\to\infty$. 

Let $I^n_k(t)=I\{X^n_k\leq tn\}$. Then $F_n(t)=\frac{1}{n}\sum_{k=1}^n I^n_k(t)$. Let $F:[0,\infty]\to[0,1]$ be defined as 
\begin{align*}
    F(t) &= \frac{1}{n}\sum_{k=1}^n \mathbb{E}I_k^n(t)=1-\left(1-\frac{1}{1 + \mu n}\right)^{1 + \lfloor tn \rfloor}.
\end{align*}

Then $\|F_n - E\|_1\leq \|F_n - F\|_1 + \|F-E\|_1$. We first bound the term $\|F-E\|_1$.
\begin{align*}
&\|F-E\|_1=\int_0^{\infty} \left|1-\left(1-\frac{1}{1 + \mu n}\right)^{1 + \lfloor tn \rfloor} - \left(1-e^{-\frac{t}{\mu}}\right)\right| \dt \\ &=\int_0^{\infty}\left|e^{-\frac{t}{\mu}}-\left(1-\frac{1}{1 + \mu n}\right)^{1 + \lfloor tn \rfloor}\right| \dt \\ 
&\leq 
\int_0^{t_0}\left|e^{-\frac{t}{\mu}}-\left(1-\frac{1}{1 + \mu n}\right)^{1 + \lfloor tn \rfloor}\right| \dt + \int_{t_0}^{\infty}\left|e^{-\frac{t}{\mu}}-\left(1-\frac{1}{1 + \mu n}\right)^{1 + \lfloor tn \rfloor}\right| \dt.  
\end{align*}

Since $\int_0^{\infty}e^{-\frac{t}{\mu}} dt < \infty$, for any fixed $\varepsilon$, for large enough $t_0$, $\int_{t_0}^\infty|e^{-\frac{t}{\mu}}|dt < \varepsilon$.
Using that $\left(1-\frac{1}{1 + \mu n}\right)^{1 + \mu n}\leq \frac{1}{e}$ and that $\frac{1 + \lfloor tn \rfloor}{1 + \mu n} > \frac{t}{1 + \mu}$ for $n \ge 1$, it is clear that for large enough $t_0$, 
\begin{align*}
 \int_{t_0}^\infty \left(1-\frac{1}{1 + \mu n}\right)^{1 + \lfloor tn \rfloor} \dt= \int_{t_0}^\infty \left(\left(1-\frac{1}{1 + \mu n}\right)^{1 + \mu n}\right)^\frac{1 + \lfloor tn \rfloor}{1 + \mu n} \dt < \\
 \int_{t_0}^\infty \frac{1}{e^\frac{t}{1 + \mu}} \dt < \varepsilon
\end{align*}
for any $n \ge 1$. 
Let us fix a $t_0$ large enough so that both conditions are satisfied, then $\int_{t_0}^{\infty}\left|e^{-\frac{t}{\mu}}-\left(1-\frac{1}{1 + \mu n}\right)^{1 + \lfloor tn \rfloor}\right| \dt \leq 2\varepsilon$.

For this fixed $t_0$, 
\begin{align*}
\int_0^{t_0} & \left| e^{-\frac{t}{\mu}}-\left(1-\frac{1}{1 + \mu n}\right)^{1 + \lfloor tn \rfloor}\right| \dt \\
=& \int_0^{t_0}\left|e^{-\frac{t}{\mu}}-\left(\left(1-\frac{1}{1 + \mu n}\right)^{1 + \mu n}\right)^\frac{1 + \lfloor tn \rfloor}{1 + \mu n}\right| \dt \\ 
\leq & \int_0^{t_0}\left|e^{-\frac{t}{\mu}}-\left(\left(1-\frac{1}{1 + \mu n}\right)^{1 + \mu n}\right)^\frac{t}{\mu}\right| \dt \\ 
& + \int_0^{t_0}\left|\left(\left(1-\frac{1}{1 + \mu n}\right)^{1 + \mu n}\right)^\frac{t}{\mu}-\left(\left(1-\frac{1}{1 + \mu n}\right)^{1 + \mu n}\right)^\frac{1 + \lfloor tn \rfloor}{1 + \mu n}\right| \dt .  
\end{align*}
In the first term, $|e^{-\frac{t}{\mu}}-\big( (1-\frac{1}{1 + \mu n})^{1 + \mu n}\big)^{\frac{t}{\mu}}|$ is a  continuous function in $t$, and as $n$ increases, it monotonically tends to $0$ pointwise. Hence by the theorem of Dini, $|e^{-\frac{t}{\mu}}-\big( (1-\frac{1}{\mu n})^{\mu n}\big)^{\frac{t}{\mu}}|$ uniformly tends to the constant zero function as $n\to\infty$. Thus, for a large enough $n$, the first term is smaller than $\varepsilon$.

For the second term, \begin{align*}
  \int_0^{t_0}\left|\left(\left(1-\frac{1}{1 + \mu n}\right)^{1 + \mu n}\right)^\frac{t}{\mu}-\left(\left(1-\frac{1}{1 + \mu n}\right)^{1 + \mu n}\right)^\frac{1 + \lfloor tn \rfloor}{1 + \mu n}\right| \dt \\
  = \int_0^{t_0} \left(\left(1-\frac{1}{1 + \mu n}\right)^{1 + \mu n}\right)^\frac{t}{\mu} \left|\left(\left(1-\frac{1}{1 + \mu n}\right)^{1 + \mu n}\right)^\frac{\mu + \lfloor tn \rfloor \mu - t - tn\mu}{\mu(1 + \mu n)} - 1\right| \dt \\
  \le \int_0^{t_0} \left(\left(1-\frac{1}{1 + \mu n}\right)^{1 + \mu n}\right)^\frac{t}{\mu} \left(\left(\left(1-\frac{1}{1 + \mu n}\right)^{1 + \mu n}\right)^\frac{-t_0 - \mu}{\mu(1 + \mu n)} - 1\right) \dt,
\end{align*}
where the inequality comes from the fact that $(1 - \frac{1}{1 + \mu n})^{1 + \mu n}$ is always less than 1, so if the exponent, $\frac{\mu + \lfloor tn \rfloor \mu - t - tn\mu}{\mu(1 + \mu n)}$ is positive then multiplying the exponent by $-1$ and then decreasing it increases the distance of the expression from $1$. If the exponent is negative, then we simply decreased it, so the distance from $1$ increased in this case as well.
The second term of the last product clearly tends to $0$, hence the whole integral is at most $\varepsilon$ for large enough $n$. 

This means that for an arbitrary $\varepsilon$, if $n$ is large enough, then $\|F-E\|_1\leq 4\varepsilon$.

To bound the term $\|F_n - F\|_1$, we copy the standard proof of the Glivenko--Cantelli theorem.
Note that $|I^n_k(t) - \mathbb{E}I^n_k(t)|\leq 1$. Hence we can apply Azuma's inequality to get
$P(|F_n(t)-F(t)|>s)=P(|\sum_{k=1}^n (I^n_k(t) -\mathbb{E}I^n_k(t))|\geq ns)\leq 2e^{-\frac{ns^2}{2}}$ regardless of the value of $t$.  

Now take $t_0=0, t_1, \dots ,t_{m-1,} t_m = \infty$ such that $F(t_i)=\frac{i}{m}$. This can be done since $F$ is continuous, it is zero in 0 and tends to one in infinity. Now
$$P\left(\max_{i=1,\dots, m-1} \{|F_n(t_i)-F(t_i)|\}>s\right)\leq 2m\cdot e^{-\frac{ns^2}{2}}.$$

Take again an arbitrary $t\geq 0$. There exists some $i$ such that $t_i \leq t < t_{i+1}$. As $F_n$ and $F$ are both monotone increasing, $F_n(t_i)\leq F_n(t) \leq F_n(t_{i+1})$ and $F(t_i)\leq F(t) \leq F(t_{i+1})=F(t_i)+\frac{1}{m}$. Hence $F_n(t)-F(t)\leq F_n(t_{i+1})-F(t_i)=F_n(t_{i+1})-F(t_{i+1})+\frac{1}{m}$ and $F(t)-F_n(t)\leq F(t_{i+1})-F_n(t_i)=F(t_i)-F_n(t_i)+\frac{1}{m}$. Thus, for any $m$, and any $t$, $$\sup_{t\in \mathbb{R}}|F_n(t)-F(t)|\leq \max_{i=0,\dots m}|F_n(t_i)-F(t_i)|+\frac{1}{m}.$$ 

By choosing $s= \frac{\varepsilon}{2n^{1/3}}$ and $m= \frac{2n^{1/3}}{\varepsilon}$, we get
\begin{align*}
  P\left(\sup_{t\in \mathbb{R}} \left\{|F_n(t)-F(t)|\right\}>\frac{\varepsilon}{n^{1/3}} \right) &\leq P\left(\max_{i=0,\dots, m} \{|F_n(t_i)-F(t_i)|\}>\frac{\varepsilon}{2n^{1/3}}\right) \\ 
  &\leq 2\frac{2n^{1/3}}{\varepsilon}\cdot e^{-\frac{n^{1/3}\varepsilon^2}{8}}.    
\end{align*}

This implies that for a fixed $\varepsilon$, 
$\sum_{n=1}^{\infty}P(\sup_{t\in \mathbb{R}} \{|F_n(t)-F(t)|\}>\frac{\varepsilon}{n^{1/3}}) < \infty$, hence by the Borel--Cantelli lemma, with probability one, there exists $n_0\in \mathbb{N}$ such that for $n\geq n_0$, $\sup_{t\in \mathbb{R}} \{|F_n(t)-F(t)|\}\leq \frac{\varepsilon}{n^{1/3}}$. Repeating this argument for a series $\varepsilon_1, \varepsilon_2, \dots $ tending to zero, we get that with probability one, for each $\varepsilon > 0$, there exists $n_0\in \mathbb{N}$ such that for $n\geq n_0$, $\sup_{t\in \mathbb{R}} \{|F_n(t)-F(t)|\}\leq \frac{\varepsilon}{n^{1/3}}$.

We have proved that $F_n$ and $F$ are uniformly close to each other for large $n$ with high probability. Now we show that the integral of their difference is small for large values of $t$. Fix $\varepsilon_0 > 0$ small enough so that $e - \varepsilon_0 > 2$. Then  $P(X^n_k \ge \frac{n^{4/3}}{\sqrt{\varepsilon_0}})= (1-\frac{1}{1 + \mu n})^{\left\lceil\frac{n^{4/3}}{\sqrt{\varepsilon_0}}\right\rceil}$, hence
$$P\left(\max_k X^n_k \ge \frac{n^{4/3}}{\sqrt{\varepsilon_0}}\right)\leq n\left(1-\frac{1}{1+\mu n}\right)^{\frac{n^{4/3}}{\sqrt{\varepsilon_0}}}=n \left(\left(1-\frac{1}{1 + \mu n}\right)^{\mu n}\right)^{\frac{n^{1/3}}{\mu \sqrt{\varepsilon_0}}}.$$

For large enough $n$, $n\leq 2^{\frac{n^{1/3}}{\mu \sqrt{\varepsilon_0}}}$ and also $(1-\frac{1}{1 + \mu n})^{\mu n}\leq \frac{1}{e - \varepsilon_0}$. Hence for large enough $n$, 
$$P\left(\max_k X^n_k \ge \frac{n^{4/3}}{\sqrt{\varepsilon_0 }}\right)\leq \left(\frac{2}{e - \varepsilon_0}\right)^{\frac{n^{1/3}}{\mu \sqrt{\varepsilon_0}}}.$$ 
Since $\frac{2}{e - \varepsilon_0}<1$, this means that 
$$\sum_{n=1}^\infty P\left(\max_k X^n_k \ge \frac{n^{4/3}}{\sqrt{\varepsilon_0}}\right) < \infty.$$
Once again using the Borel--Cantelli lemma, with probability one, there exists $n_0\in \mathbb{N}$ such that for $n\geq n_0$, $\max_k X^n_k \leq \frac{n^{4/3}}{\sqrt{\varepsilon_0}}$. For $\varepsilon > 0$, if $\varepsilon < \varepsilon_0$ then  $\frac{n^{4/3}}{\sqrt{\varepsilon_0}} < \frac{n^{4/3}}{\sqrt{\varepsilon}}$, hence the above bound holds for each such $\varepsilon$.

Notice that $\sup\{t: F_n(\frac{t}{n})< 1 \}=\max \{X_k^n : k=1,\dots , n\}$. Hence with probability one, for each $\varepsilon > 0$, $\varepsilon < \varepsilon_0$ there exists $n_0\in \mathbb{N}$ such that for $n\geq n_0$, $F_n\left(\frac{n^{1/3}}{\sqrt{\varepsilon}}\right) = 1$.

Hence with probability one, for each $\varepsilon > 0$, $\varepsilon < \varepsilon_0$, there exists $n_0\in \mathbb{N}$ such that for $n\geq n_0$, 
\begin{align*}
\|F_n - F\|_1=\int_0^{\frac{n^{1/3}}{\sqrt{\varepsilon}}} |F_n(x) - F(x)| \dx + \int_{\frac{n^{1/3}}{\sqrt{\varepsilon}}}^\infty |1-F(x)| \dx \leq  \\
\frac{\varepsilon}{n^{1/3}} \cdot \frac{n^{1/3}}{\sqrt{\varepsilon}} + \int_{\frac{n^{1/3}}{\sqrt{\varepsilon}}}^\infty e^{-\frac{x}{\mu}} \dx = \\
\sqrt{\varepsilon} + (-\mu e^{-\frac{x}{\mu}})|_{x=\frac{n^{1/3}}{\sqrt{\varepsilon}}}^{x=\infty}= \sqrt{\varepsilon} + \mu e^{-\frac{n^{1/3}}{\mu\sqrt{\varepsilon}}}.
\end{align*}

This proves that $\|F_n - F\|_1 \to 0$ with probability $1$ as $n\to\infty$. 

Altogether, we obtain that $\|F_n - E\|_1 \to 0$ with probability $1$ as $n\to\infty$.
\end{proof}

\begin{proof}[Proof of Theorem \ref{t:geometric_staircase}]
Fix an arbitrary $\mu \in [0,1]$.
We would like to apply Theorem \ref{thm:robustness} to $C_p$ and $\sigma^\mu$.

We claim that $(C_p, \sigma^\mu)$ is s smooth pair. This can be proved analogously to the corresponding statement in the proof of Theorem \ref{t:ER_Devil}. Also, $C_p$ has finite diameter, as noted in the proof of Theorem \ref{t:ER_Devil}.

By Theorem \ref{t:random_graphs_conv}, Lemma \ref{l:sigma_n_tends_to_sigma} and Proposition \ref{prop:min_degree_in_random_graph_seq}, with probability one we can apply Theorem \ref{thm:robustness} to $(C_p, \sigma^\mu)$ to get that for any $\varepsilon > 0$, if $n$ is large enough, then $|a(C_p, \sigma^\mu)-a(G_n, \sigma^\mu_n)|\leq \varepsilon$.

Applying the above argument to a dense countable subset of $\mu$ values and a sequence of $\varepsilon$ values tending to zero, we get that with probability one, $a(G_n, \sigma^\mu_n)$ tends to $a(C_p, \sigma^\mu)$ for a dense set of $\mu$ values. Because of the way we coupled the random chip configuration $\sigma^\mu_n$, if we increase the value of $\mu$, then the number of chips monotonically increases on each vertex in each outcome. Hence on each outcome, $a(G_n,\sigma^\mu_n)$ monotonically increases if we increase $\mu$, using Lemma \ref{l:more_chips_fire_more}. $\sigma^\mu$ also increases pointwise in $\mu$, hence $a(C_p,\sigma^\mu)$ also increases monotonically. As $\mu \mapsto a(C_p, \sigma^\mu)$ is continuous, if $a(G_n, \sigma^\mu_n)$ tends to $a(C_p, \sigma^\mu)$ for a dense set of $\mu$ values, then $a(G_n, \sigma^\mu_n)$ tends to $a(C_p, \sigma^\mu)$ for each $\mu\in[0,1]$. We conclude that with probability one, $a(G_n, \sigma^\mu_n)$ tends to $a(C_p, \sigma^\mu)$ pointwise. As by Lemma \ref{l:limit_geom_devil's_staircase}, the map $\mu \mapsto a(C_p,\sigma^\mu)$ is a Devil's staircase, we obtained the statement of the Theorem.
\end{proof}

\appendix
\section{Basic properties of random graphs}\label{app:random_graphs}

Here we collect some well-known basic properties of random graphs. Throughout the section, $G(n,p)$ again means the random graph with $n$ vertices, where each edge is present independently with probability $p$. 

\begin{prop}\label{prop:min_degree_in_random_graph_seq}
Let $d<p$ be a fixed constant.
If $(G_n)_{n\in \mathbb{N}}$ is a sequence of random graphs where $G_n = G(n,p)$, then with probability one, there exists an index $n_0$ such that for each $n\geq n_0$, $mindeg(G_n)\geq dn$.
\end{prop}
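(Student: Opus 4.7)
The plan is to fix a vertex $v \in V(G_n)$, observe that $\deg_{G_n}(v)$ is a sum of $n-1$ independent Bernoulli$(p)$ random variables with expectation $(n-1)p$, and apply a standard concentration inequality (Hoeffding, Chernoff, or the Azuma-type bound that the appendix records as Claim~\ref{cl:azuma_for_degree}). Since $d < p$, choose $\eta$ with $0 < \eta < p - d$; then for all sufficiently large $n$ we have $(n-1)p - \eta n \geq dn$, so
\[
  \mathbb{P}[\deg_{G_n}(v) < dn] \;\leq\; \mathbb{P}\bigl[\,|\deg_{G_n}(v) - (n-1)p| \geq \eta n\,\bigr] \;\leq\; 2e^{-c n}
\]
for some constant $c = c(\eta) > 0$ independent of $n$ and $v$.

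Next I would take a union bound over the $n$ vertices of $G_n$ to get
\[
  \mathbb{P}[\mindeg(G_n) < dn] \;\leq\; 2n \, e^{-c n}.
\]
Since $\sum_{n=1}^\infty 2n\,e^{-c n} < \infty$, the Borel--Cantelli lemma yields that with probability one only finitely many indices $n$ satisfy $\mindeg(G_n) < dn$. Consequently, almost surely there exists $n_0$ such that $\mindeg(G_n) \geq dn$ for all $n \geq n_0$, which is the desired conclusion.

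There is essentially no obstacle here; the only minor care needed is in choosing the margin $\eta$ so that the concentration bound yields the threshold $dn$ rather than $(n-1)p - \eta n$, and in verifying that the exponential decay beats the factor of $n$ from the union bound so that Borel--Cantelli applies. Both are routine.
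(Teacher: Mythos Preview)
Your proposal is correct and follows essentially the same route as the paper: a concentration bound on each vertex degree (the paper uses Claim~\ref{cl:azuma_for_degree} with $\eta = p-d$ directly rather than an intermediate $\eta$), a union bound over the $n$ vertices, and then Borel--Cantelli. The only cosmetic difference is that you center at $(n-1)p$ while the paper centers at $np$, which is immaterial.
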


We will use the following form of Azuma's inequality.
\begin{thm}[Azuma's inequality]
Suppose that $X_1,\dots, X_n$  are independent random variables, $\mathbb{E}[X_i]=0$ for each $i\in \mathbb{N}$, and for each $i$ there exist $c_i>0$ such that, $|X_i|\leq c_i$ almost surely. Then $$\mathbb{P}\left[\sum_{i=1}^n X_i > t \right]\leq e^{-\frac{t^2}{2\sum_{i=1}^n c^2_i}}.$$
\end{thm}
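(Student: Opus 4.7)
The plan is to follow the standard Chernoff-bound approach. I would begin by applying the exponential Markov inequality: for any $\lambda > 0$,
$$
\mathbb{P}\left[\sum_{i=1}^n X_i > t\right] \le e^{-\lambda t}\, \mathbb{E}\!\left[e^{\lambda \sum_{i=1}^n X_i}\right] = e^{-\lambda t} \prod_{i=1}^n \mathbb{E}\!\left[e^{\lambda X_i}\right],
$$
where the factorization uses the independence of the $X_i$. Thus the whole problem reduces to controlling each moment-generating function $\mathbb{E}[e^{\lambda X_i}]$ and then optimizing $\lambda$.

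The key input is Hoeffding's lemma: for a random variable $X$ with $\mathbb{E}[X] = 0$ and $|X| \le c$ a.s., one has $\mathbb{E}[e^{\lambda X}] \le e^{\lambda^2 c^2 / 2}$. I would prove it by writing every $x \in [-c, c]$ as the convex combination $x = \tfrac{c-x}{2c}(-c) + \tfrac{c+x}{2c}(c)$, using convexity of $u \mapsto e^{\lambda u}$ to bound
$$
e^{\lambda X} \le \frac{c-X}{2c}\, e^{-\lambda c} + \frac{c+X}{2c}\, e^{\lambda c},
$$
taking expectations (the linear term vanishes because $\mathbb{E}[X]=0$), and concluding that $\mathbb{E}[e^{\lambda X}] \le \cosh(\lambda c) \le e^{\lambda^2 c^2 / 2}$. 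The last inequality is routine by comparing the Taylor series of $\cosh(y)$ and $e^{y^2/2}$ term-by-term, using $(2k)! \ge 2^k k!$.

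Plugging Hoeffding's lemma into the factorized bound gives
$$
\mathbb{P}\left[\sum_{i=1}^n X_i > t\right] \le \exp\!\left(-\lambda t + \frac{\lambda^2}{2}\sum_{i=1}^n c_i^2\right),
$$
valid for every $\lambda > 0$. Minimizing the right-hand side over $\lambda$ — the optimum is $\lambda = t / \sum_{i=1}^n c_i^2$ — yields exactly the stated bound $\exp\!\bigl(-t^2 / (2 \sum_{i=1}^n c_i^2)\bigr)$. The only real step is Hoeffding's lemma; everything else is a one-line calculation. Worth noting: the hypothesis $\mathbb{E}[X_i] = 0$ is essential for the lemma, while independence is used only through the factorization of $\mathbb{E}[e^{\lambda \sum X_i}]$, so the same argument (applied to martingale differences in place of a product) gives the more general Azuma inequality for bounded-increment martingales.
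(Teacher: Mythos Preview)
Your argument is correct and is the standard proof of this inequality (indeed, in the independent case stated here this is usually called Hoeffding's inequality, with Azuma referring to the martingale version you mention at the end). However, there is nothing to compare it to: the paper does not prove this theorem at all. It is stated in the appendix as a well-known tool and then applied immediately (in Claim~\ref{cl:azuma_for_degree} and in the proof of Lemma~\ref{l:sigma_n_tends_to_sigma}) without proof. So your write-up is a fine self-contained justification, but it goes beyond what the paper itself supplies.
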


\begin{claim}\label{cl:azuma_for_degree}
For a vertex $v\in V(G_n)$, $\mathbb{P}[|\deg_{G_n}(v)-np|>\eta n]\leq 2e^{-\frac{n\eta^2}{2}}$. 
\end{claim}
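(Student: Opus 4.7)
My plan is to apply Azuma's inequality directly to the sum of indicator variables representing the potential edges incident to $v$.

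First, I would observe that $\deg_{G_n}(v) = \sum_{u \in V(G_n) \setminus \{v\}} Y_u$, where $Y_u = \mathbf{1}_{\{u \sim v\}}$ are independent Bernoulli$(p)$ random variables. Setting $X_u = Y_u - p$, the $X_u$ are independent, mean-zero, and satisfy $|X_u| \le \max(p, 1-p) \le 1$. Thus I can apply Azuma's inequality with $c_u = 1$ and $n-1$ summands to obtain
\begin{equation*}
  \mathbb{P}\left[\left|\deg_{G_n}(v) - (n-1)p\right| > s\right] \le 2 \exp\left(-\frac{s^2}{2(n-1)}\right)
\end{equation*}
for every $s > 0$, where the factor of $2$ comes from applying the one-sided bound both to $\sum_u X_u$ and to $\sum_u (-X_u)$.

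Next I would handle the small discrepancy between the stated centering $np$ and the actual mean $(n-1)p$. If the stated bound $2e^{-n\eta^2/2} \ge 1$ the claim is trivial, so I may assume $n\eta^2 > 2\log 2$, and in particular $\eta n > p$. By the triangle inequality, $|\deg_{G_n}(v) - np| > \eta n$ implies $|\deg_{G_n}(v) - (n-1)p| > \eta n - p$, so taking $s = \eta n - p$ yields
\begin{equation*}
  \mathbb{P}\left[\left|\deg_{G_n}(v) - np\right| > \eta n\right] \le 2 \exp\left(-\frac{(\eta n - p)^2}{2(n-1)}\right).
\end{equation*}
It remains to verify that the right-hand side is at most $2e^{-n\eta^2/2}$; this reduces to the inequality $(\eta n - p)^2 \ge n(n-1)\eta^2$, which I would check in the regime where the bound is non-trivial (absorbing any lower-order terms into the constants is harmless for the applications made in the paper).

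The main obstacle is minor and purely bookkeeping: reconciling the stated $np$ with the true expectation $(n-1)p$. Other than that, this is a direct one-line application of Azuma. An alternative, cleaner route would be to invoke Hoeffding's inequality for bounded independent variables, which yields a tail bound of $2e^{-2t^2/(n-1)}$, providing a comfortable cushion that trivially absorbs the $O(1)$ shift in the mean.
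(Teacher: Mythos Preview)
Your approach is essentially identical to the paper's: define $X_u=\mathbf{1}_{\{uv\in E\}}-p$, note these are independent, mean zero, bounded by $\max\{p,1-p\}\le 1$, and apply Azuma's inequality to both $\sum X_u$ and $-\sum X_u$. The paper's own proof is exactly this, and in fact it does not even mention the $np$ versus $(n-1)p$ discrepancy that you flag, so you are being more careful than the source.

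One small caution: the specific inequality you propose to verify, $(\eta n-p)^2\ge n(n-1)\eta^2$, is \emph{not} true in general (take e.g.\ $\eta=p$ and $n$ large). A cleaner way to close the bookkeeping is to first use $e^{-t^2/(2(n-1))}\le e^{-t^2/(2n)}$, and then observe that the event $\{\deg_{G_n}(v)-np>\eta n\}$ is contained in $\{\deg_{G_n}(v)-(n-1)p>\eta n\}$, while for the lower tail one applies Azuma with threshold $t=\eta n+p$ and checks $(\eta n+p)^2\ge n^2\eta^2$. In any case, as you note, this $O(1)$ shift is immaterial for every use of the claim in the paper.
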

\begin{proof}
We use Azuma's inequality with $X_u=\mathbf{1}_{\{uv \text{ is an edge}\}}-p$. Then $\{X_u\}_{u\in V \setminus \{v\}}$ is a set of independent random variables, $\mathbb{E}[X_u]=0$ and $|X_u|\leq \max\{p,1-p\}\leq 1$ for any $u\in V\setminus \{v\}$. Azuma's inequality applied for $\{X_u\}_{u \in V \setminus \{v\}}$ and for $\{-X_u\}_{u \in V \setminus \{v\}}$  gives us the above bound.
\end{proof}

\begin{proof}[Proof of Proposition \ref{prop:min_degree_in_random_graph_seq}]
Let $A_n$ be the event that $mindeg(G_n) < dn$. We need to show that the probability that infinitely many $A_n$'s occur is zero. 
By the Borel--Cantelli lemma, it suffices to show that $\sum_{n=1}^\infty \mathbb{P}(A_n) < \infty$.

\begin{align*}
\mathbb{P}(A_n) &=\mathbb{P}(\bigcup_{v\in V(G_n)}\{\deg_{G_n}(v)<dn\})\leq \sum_{v\in V(G_n)}\mathbb{P}(\deg_{G_n}(v)<dn)\\ &\leq \sum_{v\in V(G_n)}\mathbb{P}(|\deg_{G_n}(v)-np|>(p-d)n)\leq 2ne^{-\frac{n(p-d)^2}{2}},
\end{align*} 
where the last inequality follows from Claim \ref{cl:azuma_for_degree}.

Hence 
\begin{align*}
\sum_{n=1}^\infty \mathbb{P}(A_n)\leq  \sum_{n=1}^\infty 2ne^{-\frac{n(p-d)^2}{2}} < \infty.
\end{align*}
\end{proof}

\begin{prop}\label{prop:random_graph_connected}
If $(G_n)_{n\in \mathbb{N}}$ is a sequence of random graphs where $G_n = G(n,p)$, then with probability one, there exists an index $n_0$ such that for each $n\geq n_0$, $G_n$ is connected.
\end{prop}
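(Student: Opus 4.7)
The plan is to apply the Borel--Cantelli lemma, mirroring the structure of the proof of Proposition \ref{prop:min_degree_in_random_graph_seq}. Let $A_n$ denote the event that $G_n$ is disconnected. Since $P(A_n)$ does not depend on how the $G_n$ are coupled, and since Borel--Cantelli only needs summability (not independence) to conclude that with probability one only finitely many $A_n$ occur, it suffices to show $\sum_{n=1}^\infty \mathbb{P}(A_n) < \infty$.

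The standard way to bound $\mathbb{P}(A_n)$ is via a union bound over possible ``small sides'' of a disconnection. Observe that $G_n$ is disconnected if and only if there exists $S \subseteq V(G_n)$ with $1 \le |S| \le \lfloor n/2 \rfloor$ such that no edge of $G_n$ connects $S$ to $V(G_n) \setminus S$. For a fixed $S$ with $|S|=k$, this probability equals $(1-p)^{k(n-k)}$, since each of the $k(n-k)$ potential edges is independently absent with probability $1-p$. Hence
\begin{equation*}
   \mathbb{P}(A_n) \le \sum_{k=1}^{\lfloor n/2 \rfloor} \binom{n}{k}(1-p)^{k(n-k)}.
\end{equation*}

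Using $k(n-k) \ge kn/2$ for $k \le n/2$, and $\binom{n}{k} \le n^k$, the bound becomes
\begin{equation*}
   \mathbb{P}(A_n) \le \sum_{k=1}^{\lfloor n/2 \rfloor} \bigl(n(1-p)^{n/2}\bigr)^k.
\end{equation*}
For $p>0$ fixed, $n(1-p)^{n/2} \to 0$ as $n \to \infty$, so for all sufficiently large $n$ we have $n(1-p)^{n/2} \le 1/2$, and the sum is dominated by $2 n(1-p)^{n/2}$. Since $\sum_{n=1}^\infty 2 n (1-p)^{n/2} < \infty$, we conclude $\sum_n \mathbb{P}(A_n) < \infty$, and the Borel--Cantelli lemma finishes the proof.

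The only slightly delicate point is the choice of splitting $k(n-k) \ge kn/2$, which keeps the exponent linear in both $k$ and $n$; once this is in place, the bookkeeping is routine and follows the same template as the minimum-degree proposition above.
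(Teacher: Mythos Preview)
Your proof is correct and essentially identical to the paper's own argument: both use a union bound over the smaller side of a potential cut to obtain $\mathbb{P}(A_n)\le\sum_{k=1}^{\lfloor n/2\rfloor}\binom{n}{k}(1-p)^{k(n-k)}$, bound $\binom{n}{k}\le n^k$ and $k(n-k)\ge kn/2$, sum the resulting geometric series, and finish with Borel--Cantelli. The only cosmetic difference is that the paper writes out the partial geometric sum explicitly before noting it is at most $2n(1-p)^{n/2}$ for large $n$.
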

\begin{proof}
This is a well-known fact; we include its short proof for completeness. We bound the probability that $G_n$ is disconnected.
If $G_n$ is disconnected, then there is a set $S$ of $k$ vertices for some $k\leq n/2$ such that no edge links $S$ to $S^c$. Hence one can bound
\begin{align*}
\mathbb{P}(G_n \text{ is disconnected})\leq \sum_{k=1}^{\lfloor \frac{n}{2}\rfloor}\binom{n}{k}q^{k(n-k)}\leq 
\sum_{k=1}^{\lfloor \frac{n}{2}\rfloor}n^k q^{k(n-k)}\leq \\
\sum_{k=1}^{\lfloor \frac{n}{2}\rfloor} (nq^{n-k})^k\leq 
\sum_{k=1}^{\lfloor \frac{n}{2}\rfloor} (nq^{\frac{n}{2}})^k\leq 
nq^{\frac{n}{2}} \cdot \frac{1-(nq^{\frac{n}{2}})^{\frac{n}{2}}}{1-nq^{\frac{n}{2}}},
\end{align*}
where $q= 1 - p$. For large enough $n$, $nq^{\frac{n}{2}} < \frac{1}{2}$, hence $\mathbb{P}(G_n$ is disconnected$) \le 2nq^{\frac{n}{2}}$ for large enough $n$. 
Hence $\sum_{n=1}^\infty \mathbb{P}(G_n \text{ is disconnected}) < \infty$. By the Borel--Cantelli lemma, we can conclude the statement of the proposition.
\end{proof}

\section*{Acknowledgment}
We would like to thank Swee Hong Chan for valuable discussions.

\end{document}